\def \m{\mu} \def \ta{\tau} \def \n{\nu}
\def \a{\alpha}       \def \d{\delta}     
   \def \s{\sigma}   \def \t{\theta}  
\def \r{\rho}  
\def \e{\varepsilon}
\def \R{\mathbb R} \def \H{\mathbb H}  
   \def \S{\mathbb S}
  \def \MM{{\cal M}}
\def \pa{\partial}        \def \ii{\infty}
   \def \ds{\displaystyle}   \def \ts{\textstyle}       
\def \rt1{\sqrt{-1}\,\,}  \def \1{^{-1}}            \def \2{^{-2}}             \def \5{{\ts {1\over 2}}}
\def \ash{\mathrm{argsh}\,}
\numberwithin{equation}{section}
\theoremstyle{plain}
\newtheorem{thm}{Theorem}[section]
\newtheorem{lemma}{Lemma}[section]
\newtheorem{prop}{Proposition}[section]
\newtheorem{cor}{Corollary}[section]
\newtheorem{hypo}{Hypothesis}
\theoremstyle{remark}
\newtheorem{rem}{Remark}[section]
\theoremstyle{definition}
\newtheorem{dfn}{Definition}[section]
\newtheorem{exa}{Example}[section]
\begin{document}

\selectlanguage{english}
\renewcommand\proofname{Proof}



\title{Asymptotic behavior of a relativistic diffusion in Robertson-Walker space-times}

\author{J\"urgen Angst
\footnote{address: IRMAR, Universit\'e Rennes 1, bureau 320, Bat 22, Campus de Beaulieu, 35042 Rennes Cedex, France, email: \url{jurgen.angst@univ-rennes1.fr}}}

\maketitle

\begin{abstract}
We determine the long-time asymptotic behavior of a relativistic diffusion taking values in the unitary tangent bundle of a Robertson-Walker space-time. We prove in particular that when approaching the explosion time of the diffusion, its projection on the base manifold almost surely converges to a random point of the causal boundary and we also describe the behavior of the tangent vector in the neighborhood of this limiting point. 
\end{abstract}

\selectlanguage{english}





\section{Introduction}
The study of Brownian motion on a Riemannian manifold shows that the short-time and long-time asymptotic behavior of the process strongly reflects the geometry of the underlying manifold. Considering the importance of heat kernels in Riemannian geometry, it appears very natural to investigate the links between geometry and asymptotics of Brownian paths in a Lorentzian setting. 
In his seminal work \cite{dudley1}, R.M. Dudley showed that a relativistic diffusion, \emph{i.e.} a diffusion process with values in a Lorentz manifold whose law is Lorentz-covariant, cannot exist in the base space, but makes sense at the level of the tangent bundle. More precisely, Dudley showed that there is no Lorentz-covariant diffusion in the Minkowski space-time but that there exists a unique Lorentz-covariant diffusion with values in its (pseudo)-unitary tangent bundle.
This process, that we will name \textit{Dudley's diffusion} in the sequel, is simply obtained by integrating the classical hyperbolic Brownian motion on the unitary tangent space. 
\par
\medskip
In \cite{flj}, J. Franchi and Y. Le Jan extend Dudley's construction to the realm of general relativity by defining, on the 
future-directed half of the unitary tangent bundle $T^1_+ \mathcal M$ of an arbitrary Lorentz manifold $\mathcal M$, a diffusion which is Lorentz-covariant. This process, that we will simply call \textit{relativistic diffusion}, is the Lorentzian
analogue of the classical Brownian motion on a Riemannian manifold. It can be seen either as a random perturbation of the timelike geodesic flow on the unitary tangent bundle, or as a stochastic development of Dudley's diffusion in a fixed tangent space, see Sect. 3 of \cite{flj}. In \cite{flj2}, Franchi and Le Jan generalized their construction by introducing the so-called ``curvature diffusions'', whose quadratic variation is allowed to depend locally on the curvature of the underlying space-time.  
\par 
\medskip
In the case when the underlying manifold is the Minkowski space-time, the long-time asymptotic behavior of the above relativistic diffusion is well understood. It was first studied by Dudley himself in \cite{dudley1,dudley3} where it is shown that the process is transient, and escapes to infinity in a random preferred direction. In \cite{ismael}, I. Bailleul completed Dudley's results by performing the full determination of the Poisson
boundary of the relativistic diffusion, \emph{i.e.} the set of bounded harmonic functions on the Minkowski phase space endowed with the differential operator which is the infinitesimal generator of the diffusion. Recall that this is equivalent to the determination of the invariant $\sigma$-field of the natural filtration of the relativistic diffusion. Moreover, Bailleul gave a geometric description of the Poisson boundary of the relativistic diffusion which can be formulated in terms of the causal boundary of Minkowski space-time. 
\par
\medskip
As for the usual Brownian motion on a general Riemannian manifold, there is no hope to fully determine the asymptotic behavior of the relativistic diffusion on an arbitrary Lorentzian manifold: it could depend heavily on the base space, see e.g. \cite{marcanton} and its references in the case of Cartan--Hadamard manifolds. In fact, the difficulty is a priori greater in the Lorentzian context: first because of the non-positivity of the underlying metric, then because the relativistic diffusion does not live on the base manifold, but on its pseudo-unit tangent bundle, so that it is basically seven-dimensional when the base manifold have four dimensions, and there is no general reason for which it must contain one or more lower-dimensional sub-diffusions. On the contrary, recall that in the case of a constantly curved Riemannian manifold, the Brownian motion fortunately always admits a one-dimensional sub-diffusion: the radial sub-diffusion. 
\par
\medskip
Nevertheless, the study of the relativistic diffusion has been led in details in a few examples of Lorentzian manifolds.
Thereby, in \cite{flj} and \cite{franchigoedel}, the authors studied the long-time behavior of the diffusion in Schwarzschild-Kruskal-Szekeres
space-time and G\"odel space-time respectively. Although they did not reach the full determination of the
Poisson boundary, they achieved to describe the almost sure asymptotics of diffusion's paths and came up with the conclusion that they asymptotically behave like random light-like geodesics. 
\par
\medskip
The purpose of this paper is to perform a detailed study of the long-time asymptotic behavior of the relativistic diffusion in the case when the underlying space-time belong to a large class of Lorentz manifolds : Robertson-Walker space-times, or RW space-times for short, see Sect. \ref{sec.RW}. This class of Lorentzian manifolds offers the advantage of being very rich (e.g., RW space-times can be spatially compact / non compact, geometrically complete / non complete etc.), yet the geometry of these space-times remains quite simple (one can for example explicitly integrate the geodesic equations).
\par
\medskip
We fully caracterize the asympotic behavior of the diffusion in terms of geometric properties at infinity of the base manifold. 
We show in particular that the relativistic diffusion's paths converge almost surely to random points of the causal boundary $\partial \mathcal M_c^+$ (see Sect. \ref{sec.RW} and \cite{geroch,flores}) of the base manifold $\mathcal M$.

\newtheorem{thmintro}{Theorem}
\renewcommand{\thethmintro}{\empty{}}
\begin{thmintro}[Theorem 3.1 below]
Let $\mathcal M:=(0, T) \times_{\alpha} M$ be a RW space-time satisfying the hypotheses of Sect. \ref{sec.RW}. Let $(\xi_0, \dot{\xi}_0) \in T^1_+ \mathcal M$ and let $(\xi_s, \dot{\xi}_s)_{0 \leq s \leq \tau}$ be the relativistic diffusion in $T^1_+\mathcal M$ starting from $(\xi_0, \dot{\xi}_0)$. Then, almost surely as $s$ goes to the explosion time $\tau$ of the diffusion, the process $\xi_s$ converges to a random point $\xi_{\infty}$ of the causal boundary $\partial \mathcal M_c^+$.
\end{thmintro}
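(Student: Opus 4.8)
\emph{The plan.} I would work in the warped-product coordinates $(t,x)\in(0,T)\times M$ and reduce the statement to the convergence of the spatial component. The constraints $\langle\dot\xi_s,\dot\xi_s\rangle=-1$ and future-orientation give, writing $\dot\xi_s=(\dot t_s,\dot x_s)$, the identity $\dot t_s^{\,2}=1+\alpha(t_s)^2|\dot x_s|_\sigma^{\,2}$, hence $\dot t_s\ge1$ and $|\dot x_s|_\sigma\le\dot t_s/\alpha(t_s)$. Since $\tfrac{d}{ds}t_s=\dot t_s\ge1$, the cosmological time $t_s$ is strictly increasing, so it converges as $s\uparrow\tau$ to some $t_\infty\in(t_0,T]$, with $\tau\le t_\infty-t_0$; in particular $\tau<\infty$ whenever $T<\infty$, and $\tau=\infty$ forces $t_\infty=T=\infty$. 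Everything is thus reduced to showing that $x_s$ converges in a suitable compactification of $M$, and the natural dichotomy is whether the conformal integral $\int_{t_0}^{t_\infty}\alpha(u)^{-1}\,du$ is finite or not.

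\emph{The tame case.} Suppose $\int_{t_0}^{t_\infty}\alpha(u)^{-1}\,du<\infty$; this holds automatically when $t_\infty<T$, since then $\alpha$ is bounded below on $[t_0,t_\infty]$. Changing variables $dt_s=\dot t_s\,ds$,
\[
\int_0^\tau|\dot x_s|_\sigma\,ds\;\le\;\int_0^\tau\frac{\dot t_s}{\alpha(t_s)}\,ds\;=\;\int_{t_0}^{t_\infty}\frac{du}{\alpha(u)}\;<\;\infty,
\]
so the spatial path has finite $\sigma$-length; being a finite-length curve in the metric space $(M,d_\sigma)$, it is Cauchy and converges to a point $x_\infty$ of the metric completion $\overline M$. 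If $t_\infty=T$, then $(T,x_\infty)$ lies on the spacelike future boundary $\{T\}\times\overline M$; if $t_\infty<T$, then, since the diffusion leaves every compact subset of $\mathcal M$ as $s\uparrow\tau$ while $t_s$ stays away from $0$, one must have $x_\infty\in\partial M$. In both situations the limit is a point of $\partial\mathcal M_c^+$, by the description of the causal boundary of RW space-times recalled in Sect.~\ref{sec.RW} (see also \cite{geroch,flores}).

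\emph{The main case.} There remains $t_\infty=T$ with $\int_{t_0}^{T}\alpha(u)^{-1}\,du=\infty$, in which the spatial $\sigma$-length may be infinite. If $M$ is compact, this divergence collapses the whole future causal boundary to a single point, so $\xi_s\to\xi_\infty$ is automatic from $t_s\uparrow T$. If $M$ is a non-compact space form, I would write $\dot x_s=\alpha(t_s)^{-1}\sqrt{\dot t_s^{\,2}-1}\,u_s$ with $u_s$ a $\sigma$-unit vector along $x_s$, and analyse the system of stochastic differential equations satisfied by $(t_s,\dot t_s,x_s,u_s)$ coming from the Franchi--Le Jan construction (Sect.~3 of \cite{flj}) written in these coordinates. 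The heart of the matter is a pathwise estimate on the growth of $\dot t_s$ (equivalently of the spatial momentum $\alpha(t_s)^2|\dot x_s|_\sigma$), sharp enough that the martingale part driving the direction $u_s$---whose quadratic-variation density decays like $\alpha(t_s)^{-2}$, resp.\ $(\dot t_s^{\,2}-1)^{-1}$---has finite quadratic variation at $\tau$; granting also the integrability of the finite-variation part, $u_s$ then converges almost surely to some $u_\infty$. A comparison argument in the constant-curvature space $(M,\sigma)$ shows that $x_s$ stays at bounded $\sigma$-distance from the geodesic ray issued from $x_0$ with asymptotic direction $u_\infty$, hence converges to the corresponding ideal point; and one checks that the auxiliary coordinate which, together with that ideal point, parametrizes the relevant (null) stratum of $\partial\mathcal M_c^+$ also admits a limit.

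\emph{The main obstacle.} The bottleneck is precisely this pathwise control of the velocity: distinguishing sub-regimes according to the behaviour of $\alpha$ near $T$ (bounded versus divergent, polynomial versus exponential rate) and, in the inflating case, balancing the growth of $\alpha(t_s)$ against the transience of the velocity diffusion so as to make the angular noise integrable up to $\tau$. One must finally verify that the tame and the infinite-length regimes can each occur with positive probability, so that $\xi_\infty$ is a genuinely random point which, depending on the realization, lies in different strata of the causal boundary.
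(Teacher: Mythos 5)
There is a genuine gap, and it sits exactly where your argument pretends to be easiest. You take for granted that, as $s\uparrow\tau$, the only thing that can happen is $t_s\to t_\infty$ with the spatial part either converging or escaping, and you dispose of the possibility $t_\infty<T$ by claiming that ``the diffusion leaves every compact subset of $\mathcal M$'' so that $x_\infty\in\partial M$. This confuses the projection $\xi_s$ with the bundle-valued process $(\xi_s,\dot\xi_s)$: explosion of the SDE only forces the process to leave every compact of $T^1_+\mathcal M$, and this could a priori happen through a blow-up of $\dot t_s$ (equivalently of $\alpha(t_s)^2|\dot x_s|_\sigma$) while $t_s$ stays in a compact subinterval of $(0,T)$ and $x_s$ converges to an interior point; moreover $M$ is one of the complete model spaces $\mathbb R^d$, $\mathbb H^d$, $\mathbb S^d$, so the metric boundary $\partial M$ you invoke is empty. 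In that unexcluded scenario $\xi_s$ would converge to an interior point of $\mathcal M$, not to $\partial\mathcal M_c^+$, and the theorem would fail. Ruling it out is precisely the content of Proposition \ref{pro.exiuni} and Lemmas \ref{LEM.UNIEXITEMP}--\ref{LEM.LIFETIME} of the paper: via the change of variable $a_s=\alpha(t_s)\sqrt{\dot t_s^2-1}$ one shows the temporal pair cannot explode before $t_s$ hits $T$, so the lifetime is exactly $\tau=\inf\{s:\,t_s=T\}$ and the paths are inextendible. Your proposal contains no substitute for this step. Note also that, once inextendibility is in hand, the paper's proof of Theorem \ref{theo.causal} is essentially immediate: RW space-times are globally hyperbolic, hence strongly causal, and by the very construction of the causal boundary every inextendible future-directed timelike curve acquires an ideal endpoint; the concrete description you aim for is carried out separately in Propositions \ref{pro.horizon}, \ref{pro.eudlicausal} and \ref{pro.hypercausal}.

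Your ``main case'' does follow the same concrete route as those propositions (convergence of the normalized derivative because its driving clock $C_s=\sigma^2\int^s du/(\dot t_u^2-1)$ is a.s. finite at $\tau$, then convergence of the null coordinate $\delta_s$), but everything you call the ``heart of the matter'' is exactly what the paper spends Sections 4.1--4.2 proving: the a.s. convergence of the clock when $I_+(\alpha)=+\infty$ (Corollary \ref{cor.clock}, which itself needs the growth rates of Lemma \ref{lem.control.exp} and Proposition \ref{PRO.ASYMPC}), and the convergence of $\delta_s$, which does not follow from convergence of the direction alone but requires the iterated-logarithm control of $\bigl|\Theta_\infty-\dot x_s/|\dot x_s|\bigr|$ against the divergent weight $a_u/\alpha^2(t_u)$. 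As it stands your text is an outline acknowledging these obstacles rather than an argument overcoming them. Finally, the closing remark that one must check that both regimes ``occur with positive probability'' is off the mark: the dichotomy $I_+(\alpha)<+\infty$ versus $I_+(\alpha)=+\infty$ is a deterministic property of the warping function, and a.s. $t_s\to T$, so for a given space-time exactly one regime occurs; the randomness of $\xi_\infty$ lies in its position within the corresponding stratum of $\partial\mathcal M_c^+$, not in which stratum it hits.
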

\par
\smallskip
As the geometry of the causal boundary strongly reflects the one of the base manifold, the above synthetic result actually covers a huge variety of geometric asymptotic behaviors, depending on the type of RW space-times considered, see Sect. \ref{sec.conv}. 
\par
\smallskip
We also caracterize the asymptotic behavior of the tangent vector $\dot{\xi}_s \in T_{\xi_s}^1 \mathcal M$ when $s$ goes to $\ta$. 
We show in particular that it is strongly related to the finiteness of $T$ and/or to the growth rate of the torsion function $\a$.
Rougthly speaking, if $T<+\infty$ or $T=+\infty$ and $\a$ grows polynomially, then when properly rescaled, the tangent process $\dot{\xi}_s$ is convergent, whereas if $T=+\infty$ and $\a$ grows exponentially fast, $\dot{\xi}_s$ shows some ``recurrence'' properties.
Precise statements are given in section \ref{sec.derivative} below.  
\par
\smallskip
\begin{rem}
In the case of a spatially flat Robertson-Walker space-time $\mathcal M= (0, +\infty) \times_{\alpha} \mathbb R^{d}$ where $\a$ has exponential growth, we manage to push the analysis further, see \cite{angst1}. Indeed, we show that not only the process $\xi_s$ converges to a random point $\xi_{\infty}$ of the causal boundary $\partial \mathcal M_c^+$ but the Poisson boundary of the diffusion is precisely generated by the single random variable $\xi_{\infty}$, which in that case can be identified with a spacelike copy of the Euclidean space $\mathbb R^{d}$. Namely, we prove the following result:
\end{rem}
\par
\begin{thmintro}[Theorems 3.3 and 3.4 of \cite{angst1}]
Let $\mathcal M:=(0, +\infty) \times_{\alpha} \mathbb R^{d}$ be a Robertson-Walker space-time where $\alpha$ has exponential growth. Let $(\xi_0, \dot{\xi}_0) \in T^1_+ \mathcal M$ and let  $(\xi_s, \dot{\xi}_s)_{s \geq 0}=(t_s,x_s, \dot{t}_s, \dot{x}_s)_{s \geq 0}$ be the relativistic diffusion in $T^1\mathcal M$ starting from $(\xi_0, \dot{\xi}_0)$. Then, almost surely when $s$ goes to infinity, the spatial projection $x_s$ converges to a random point $x_{\infty}$ in $\mathbb R^{d}$, and the invariant $\sigma-$field of the whole diffusion is generated by the single random variable $x_{\infty}$.
\end{thmintro}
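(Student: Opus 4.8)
The plan is the following. Write the RW metric in the coordinates $(t,x)\in(0,+\infty)\times\mathbb R^d$ as $-dt^2+\alpha(t)^2\,|dx|^2$, and recall from \cite{flj,flj2} the system of stochastic differential equations governing the relativistic diffusion $(t_s,x_s,\dot t_s,\dot x_s)$, together with the unit pseudo-norm relation $\dot t_s^2=1+\alpha(t_s)^2|\dot x_s|^2$. Since the diffusion is future-directed, $\dot t_s\ge1$, so $t_s\ge t_0+s$ is increasing and tends to $+\infty$; one first checks, using the hypotheses of Sect.~\ref{sec.RW}, that the diffusion does not explode, i.e. $\tau=+\infty$. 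Because the Euclidean group acts by isometries on $\mathcal M$ and the relativistic diffusion is equivariant, the pair $(t_s,\dot t_s)$ forms an autonomous two-dimensional sub-diffusion: its generator, the geodesic vector field plus the vertical Laplacian along the hyperbolic fibres, closes on functions of $(t,\dot t)$ since $|\dot x_s|^2=(\dot t_s^2-1)/\alpha(t_s)^2$; meanwhile the direction $\theta_s:=\dot x_s/|\dot x_s|\in S^{d-1}$ is, conditionally on the path of $(t_s,\dot t_s)$, a time-changed spherical Brownian motion. I would organise the proof around this splitting.

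For the first assertion, the pseudo-norm relation gives $|\dot x_s|\le\dot t_s/\alpha(t_s)$, hence, changing variables via $dt=\dot t_s\,ds$,
\begin{equation}
\int_0^{+\infty}|\dot x_s|\,ds\ \le\ \int_0^{+\infty}\frac{\dot t_s}{\alpha(t_s)}\,ds\ =\ \int_{t_0}^{+\infty}\frac{dt}{\alpha(t)}\ <\ +\infty,
\end{equation}
the last integral being finite precisely because $\alpha$ has exponential growth. Therefore $x_s=x_0+\int_0^s\dot x_u\,du$ converges, absolutely and almost surely, to a random point $x_\infty\in\mathbb R^d$. This is also what the general Theorem~3.1 above yields, once one notes that under exponential growth of $\alpha$ the causal boundary $\partial\mathcal M_c^+$ is a spacelike slice canonically identified with a copy of $\mathbb R^d$, with $\xi_s\to\xi_\infty$ corresponding to $x_s\to x_\infty$. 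In particular $\sigma(x_\infty)\subseteq\mathcal I$, where $\mathcal I$ denotes the invariant $\sigma$-field of the diffusion.

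The substance of the statement is the reverse inclusion $\mathcal I\subseteq\sigma(x_\infty)$, for which the key dynamical input is the behaviour of the velocity. Setting $\dot t_s=\cosh a_s$ with $a_s\ge0$, the drift governing $a_s$ is, because $\alpha'(t_s)/\alpha(t_s)$ tends to a positive constant as $t_s\to+\infty$, eventually of order $-\sinh a_s$ for large $a_s$; hence $a_s$ is positive recurrent, the direction $\theta_s$ run with its own (almost surely infinite) clock is a recurrent spherical Brownian motion, and the velocity process $(\dot t_s,\theta_s)$ has trivial tail $\sigma$-field. This is precisely the ``recurrence'' phenomenon announced in the introduction and treated in Sect.~\ref{sec.derivative}; under merely polynomial growth of $\alpha$ the restoring drift would be too weak and $a_s$ would instead tend to $+\infty$. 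The upshot is that the only surviving source of asymptotic randomness is $x_\infty$. To upgrade this to the equality $\mathcal I=\sigma(x_\infty)$ I would disintegrate the law of the diffusion with respect to $x_\infty$: conditionally on $\{x_\infty=y\}$ the process becomes the Doob $h$-transform of the relativistic diffusion by the Martin kernel $k_y(v)=\frac{d\,(\mathbb P_v\circ x_\infty^{-1})}{d\,(\mathbb P_{v_0}\circ x_\infty^{-1})}(y)$, with $v$ the current configuration and $v_0$ a fixed reference one, and it remains to check that this $h$-transformed diffusion still has trivial invariant $\sigma$-field. An equivalent, more concrete route, in the spirit of Bailleul~\cite{ismael}, is a coupling argument: for two initial configurations one builds a coupling of the two copies of the diffusion forcing their spatial limits $x_\infty$ to coincide and their velocities to synchronise, and concludes by a $0$--$1$ law that every bounded harmonic function factors through $x_\infty$.

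The main obstacle is this last step: showing, for almost every boundary point $y$, that the relativistic diffusion conditioned to exit at $y$ has trivial tail, or equivalently carrying the coupling all the way to $s=+\infty$. This requires a quantitative form of the recurrence of the degenerate, hypoelliptic velocity sub-diffusion, together with estimates on the spatial increments $\dot x_s$ uniform enough that pinning $x_\infty$ does not destroy that recurrence. By contrast, the convergence of $x_s$ is soft, being essentially the deterministic bound above once the exponential growth of $\alpha$ is used, and the symmetry reduction to $(t_s,\dot t_s)$ is pure bookkeeping.
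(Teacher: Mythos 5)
Your treatment of the first assertion is correct and coincides with what the paper does: exponential growth means $H_\infty>0$, hence $I_+(\alpha)<+\infty$, and the pseudo-norm bound $|\dot x_s|\le \dot t_s/\alpha(t_s)$ together with the time change $dt=\dot t_s\,ds$ gives finite total variation of $x_s$, exactly as in Proposition \ref{pro.horizon}. Your description of the velocity behaviour (Harris recurrence of $\dot t_s$ when $H_\infty>0$, divergence of the clock $C_s$, hence recurrence of the time-changed spherical Brownian motion $\dot x_s/|\dot x_s|$) also agrees with Proposition \ref{PRO.ERGO} and Corollary \ref{cor.clock}, and the symmetry reduction to the temporal sub-diffusion is indeed the same bookkeeping as in Proposition \ref{pro.exiuni}.

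The second assertion, however, is where the theorem actually lives, and there your text is a plan rather than a proof. Note first that the present paper does not prove this statement at all: it is quoted from \cite{angst1}, so the only thing to assess is your own argument, and that argument stops exactly at the decisive point. The inclusion $\sigma(x_\infty)\subseteq\mathcal I$ plus recurrence of the velocity does not yield $\mathcal I\subseteq\sigma(x_\infty)$; the step you yourself flag as ``the main obstacle'' --- triviality of the invariant $\sigma$-field of the diffusion conditioned on $\{x_\infty=y\}$, or equivalently carrying a coupling (in fact a shift-coupling) of two copies of the full phase-space process all the way to $s=+\infty$ --- is precisely the content of Theorems 3.3 and 3.4 of \cite{angst1}, and it is left entirely open in your proposal. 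It genuinely requires work: the Doob $h$-transform by the Martin kernel modifies the drift of the degenerate hypoelliptic system, so the recurrence statements you invoke for the unconditioned process do not transfer automatically; the asserted triviality of the tail of $(\dot t_s,\theta_s)$ is itself unproved; and even granted, it does not by itself exclude invariant functionals mixing the position limit with velocity asymptotics. So the convergence half of the statement is established, but the Poisson-boundary half --- the substance of the theorem --- is missing.
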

\par
The article is organized as follows. In the next section, we briefly recall the geometrical background on Robertson-Walker space-times and the definition of the relativistic diffusion in this setting. In Section \ref{sec.results}, we then state the results concerning the asymptotic behavior of the relativistic diffusion. The last section is devoted to the proofs of these results. For reasons of concisions, some elements of proofs are omitted here but they appear in great detail in the author's thesis \cite{mathese}.


\section{Geometrical and probabilistic background}\label{sec.background}
The Lorentz manifolds we consider here are Robertson-Walker space-times. These manifolds are named after H. P. Robertson and A. G. Walker \cite{robertson,walker} and their work on solutions of Einstein's equations satisfying the ``cosmological principle''. They are the natural geometric framework to formulate the theory of Big-Bang in General Relativity.

\subsection{Robertson-Walker spacetimes}\label{sec.RW}
The constraint that a space-time satisfies both Einstein's equations and the cosmological principle implies it has a warped product structure, see e.g. \cite{Weinberg} p. 395--404. A RW space-time, classically denoted by $\mathcal M:=I \times_{\alpha} M$, is thus defined as a Cartesian product of an open interval $(I,-dt^2)$ (the base) and a Riemannian manifold $(M, h)$ of constant curvature (the fiber), endowed with a Lorentz metric of the following form $g := -dt^2 + \alpha^2(t)h,$ where $\alpha$ is a positive function on $I$, called the \textit{expansion function} or \textit{torsion function}. A general study on the geometry of warped product manifolds can be found in \cite{ghani1}. More specific results on the geometry of RW space-times can be found in \cite{floressanchez}. Classical examples of RW space-times are the (half)$-$Minkowski space-time, Einstein static universe, de Sitter and anti-de Sitter space-times etc.
\par
\smallskip
Let us fix an integer $d \geq 3$. A smooth, simply connected $d-$dimensional Riemannian manifold $(M, h)$ of constant curvature is either isometric to the Euclidean space $\mathbb R^{d}$, the hyperbolic space $\mathbb H^{d}$, or the Euclidean sphere $\mathbb S^{d}$, with their standard metric structures. Without loss of generality, we will thus restrict ourself to these three cases, and in the sequel $(M,h)$ will denote one of these three spaces endowed with their standard associated metric.  
Moreover, we will consider here two types of expansion functions that correspond to the two standard models in cosmology: a universe in \emph{infinite expansion} or a \emph{Big-Crunch}, see Fig. \ref{fig.alpha}. More precisely, we will assume the following mild hypotheses on $\alpha$:
\begin{hypo}\label{hypo.1}
\noindent
\begin{enumerate}
\item The function $\alpha$ is positive and of class $C^2$ on $(0, T)$;
\item The function $\alpha$ is $\mathrm{log}-$concave, \emph{i.e.} the Hubble function $H:=\alpha'/\alpha$ is nonincreasing;
\item We are in one of these two cases :
\begin{enumerate}
\item $T=+\infty$, and $H \geq 0$ on $(0, +\infty)$ (infinite expansion);
\item $T<+\infty$ and $\lim_{t \to 0^+} \a(t)= \lim_{t \to T^-}\a(t)=0$ and $\lim_{t \to T^-} H(t)=-\infty$ (Big-Crunch);
\end{enumerate}

\end{enumerate}
\end{hypo}

\begin{figure}[ht]
\hspace{3.5cm}\includegraphics[scale=0.5]{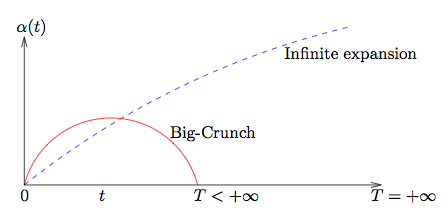}
\caption{The two types of expansion functions considered.\label{fig.alpha}}
\end{figure}
In the case $T=+\infty$, since $H$ is nonnegative and nonincreasing, it admits a limit at infinity that we denote by $H_{\infty}:=\lim_{t \to +\infty} H(t)$.

\begin{rem}\label{rem.logconcave}
The hypothesis of $\mathrm{log}-$concavity of the expansion function is classical. From a physical point of view, it ensures that a RW space-time satisfies the weak energy condition of \cite{hawell}. Indeed, the stress-energy tensor associated to a RW space-time via Einstein's Equations has a perfect fluid structure with energy density $\mathfrak q$ and pressure density $\mathfrak p$ given by
\[
8 \pi \mathfrak q /3 :=  \left(\frac{\a'^2(t)}{\a^2(t)}+\frac{k}{\a^2(t)}\right), \quad -8 \pi \mathfrak p := \left( \frac{2\a''(t)}{\a(t)} + \frac{\a'^2(t)}{\a^2(t)}+\frac{k}{\a^2(t)}\right),
\]
where $k \in \{-1, 0, 1\}$ is the curvature of the fiber $M$. The classical weak energy condition $\mathfrak q+\mathfrak p \geq 0$ and strong energy condition $\mathfrak q+3 \mathfrak p \geq 0$ are thus respectively equivalent to 
\[
-2 \left( \frac{\a''(t)}{\a(t)} - \frac{\a'^2(t)}{\a^2(t)}+\frac{2k}{\a^2(t)}\right) \geq 0, \;\; \hbox{and} \;\; - \alpha''(t) \geq 0.
\]
In particular, a spatially flat RW space-time satisfies the weak energy condition if and only if the expansion function is log-concave. If the warping function is concave, the strong energy condition is automatically satisfied, whatever the curvature of the fiber.
From a more mathematical point of view, $\mathrm{log}-$concavity of the expansion function allows for example to caracterize spacelike hypersurfaces with constant higher order mean curvature as the slices $\{ t_0 \}\times M $ of the foliation $I \times_{\alpha} M$ \cite{alias}. 
Let us also note that despite we are assuming global $\mathrm{log}-$concavity here for simplicity, most of our results generalize if $\alpha$ is $\mathrm{log}-$concave outside a compact set.\end{rem}

In the case $T=+\infty$, the growth rate at infinity of the warping function $\alpha$ will play and important role in the sequel.

\begin{dfn}\label{defi.2}
If $T=+\infty$, we will say that the growth rate of the torsion function is 
\begin{enumerate}
\item (at most) polynomial if
\[ 
H_{\infty}=0 \quad \hbox{and} \quad \limsup_{t \to +\infty} \frac{\log \left( \alpha(t)\right)}{\log \left(\int^t \alpha(u) du\right)}<1;
\]
\item subexponential if 
\[ 
H_{\infty}=0 \quad \hbox{and} \quad \lim_{t \to +\infty} \frac{\log \left( \alpha(t)\right)}{\log \left(\int^t \alpha(u) du\right)}=1;
\]
\item exponential if $H_{\infty}>0$.
\end{enumerate}
\end{dfn}

Recalling that $\alpha$ is $\mathrm{log}-$concave so that $H'\leq 0$, a simple integration by parts shows that 
\begin{equation}\label{eqn.loglog}
H(t) \leq \frac{\alpha(t)}{\int^t \left( 1 - \frac{H'(u)}{H(u)^2} \right) \alpha(u) du}  \;\; \hbox{and thus by integration}\;\; \frac{\log \left( \alpha(t)\right)}{\log \left(\int^t \alpha(u) du\right)} \leq 1.
\end{equation}
Thus, in the case $T=+\infty$ and $H_{\infty}=0$, the distinction between an at most polynomial growth and a subexponential growth is related to the asymptotic behavior of the ratio $H'(t)/H(t)^2$. For example, if $\alpha(t)=t^c$ with $0<c<+\infty$ i.e. $H(t)=  t / c$ and $ -H'(t)/H(t)^2 \equiv 1/c$, when $t$ goes to infinity we have 
\[
\frac{\log \left( \alpha(t)\right)}{\log \left(\int^t \alpha(u) du\right)} \longrightarrow \frac{c}{1+c},
\]
and the expansion is of course at most polynomial in the sense of Definition \ref{defi.2}. Therefore, if the growth of $\alpha$ is subexponential, we have necesserally $\liminf_{t \to +\infty} - H'(t)/H(t)^2=0$ and in most representative examples, we have in fact  $\lim_{t \to +\infty} H'(t)/H(t)^2=0$, see the first case in Example \ref{exa.hypo2} below. Nevertheless, an oscillatory behavior is not forbidden, see the second example below. To avoid very pathological examples in the subexponential case, we will make the following extra assumption.

\begin{hypo}\label{hypo.2}
If $T=+\infty$ and the growth of $\alpha$ is subexponential, then
\[
\liminf_{t \to +\infty} - \frac{H'(t)}{H(t)^2}  =0 \quad \hbox{and} \quad  \limsup_{t \to +\infty} - \frac{H'(t)}{H(t)^2}  =\kappa \in [0,+\infty).
\]
\end{hypo}

\begin{exa}\label{exa.hypo2}
To illustrate the fact that this extra assumption is not restrictive, let us give examples of warping functions satisfying both Hypotheses \ref{hypo.1} and \ref{hypo.2}.
\begin{enumerate}
\item  if $\alpha(t)=t^{\gamma} \exp \left( t^\beta \right)$ with $\gamma \in [0,+\infty)$ and $\beta \in (0,1)$, then
\[
H(t)=\frac{\gamma}{ t}+\frac{\beta}{ t^{1-\beta}} \to 0 \;\; \hbox{and}\;\; - \frac{H'(t)}{H^2(t)} = \frac{\gamma + \beta(1-\beta)t^{\beta}}{(\gamma + \beta t^{\beta})^2}\to 0 \;\; \hbox{so that}\;\;\lim_{t \to +\infty} \frac{\log \left( \alpha(t)\right)}{\log \left(\int^t \alpha(u) du\right)}=1;
\]
\item consider the triangular function $\phi(t):=(1+t)\mathds{1}_{t \in [-1,0]} + (1-t)\mathds{1}_{t \in [0,1]}$ and define the sawtooth function
\[ 
- \frac{H'(t)}{H^2(t)} := \kappa \sum_{k\in \mathbb N \backslash \{0\}} \phi(k(t-k)), \quad \hbox{so that}\quad H(t) \simeq \frac{\kappa}{\log(t)} \;\; \hbox{at infinity}.
\]
We have then naturally 
\[
0=\liminf_{t \to +\infty} - \frac{H'(t)}{H(t)^2}  <  \limsup_{t \to +\infty} - \frac{H'(t)}{H(t)^2}  =\kappa, \quad  \hbox{and}\;\;\lim_{t \to +\infty} \frac{\log \left( \alpha(t)\right)}{\log \left(\int^t \alpha(u) du\right)}=1.
\]
\end{enumerate}
\end{exa}

\par
\smallskip
The geodesic completeness and the geometry at infinity of a RW space-time $\mathcal M:=I \times_{\alpha} M$ are strongly related to the finiteness of the 
following integrals, which will play a major role in the description of the asymptotic behavior of the relativistic diffusion : 
\begin{equation}I_{-}(\alpha):=\int_0^{c} \frac{du}{\alpha(u)} \in \mathbb R^+ \cup \{+ \infty\}, \qquad I_{+}(\alpha):=\int_{c}^T \frac{du}{\alpha(u)} \in \mathbb R^+ \cup \{+ \infty\}, \;\; \hbox{where} \;\; c \in (0,T). \end{equation}
\par
Robertson-Walker space-times are classical examples of globally hyperbolic and thus strongly causal space-times. Such spaces admit a natural and intrinsic compactification called the causal boundary which was first introduced by Geroch, Kronheimer and Penrose in \cite{geroch}. In their approach, a future (past) ideal point is attached to every inextensible, physically admissible future (past) trajectory, in such a way that the ideal point only depends on the past (future) of the trajectory, see \cite{harris}. The resulting causal boundary $\pa \mathcal M_c =\pa \mathcal M_c^- \cup \pa \mathcal M_c^+$ decomposes into the union of two partial boundaries, $\pa \mathcal M_c^-$ corresponding to past oriented trajectories and $\pa \mathcal M_c^+$ corresponding to future oriented ones. In the case when the underlying space-time $\mathcal M$ is a RW space-time, the causal boundary $\pa \mathcal M_c$ was explicited in \cite{flores}. It depends on finiteness of 
$I_{-}(\alpha)$ and $I_{+}(\alpha)$ and on the curvature of the Riemannian fiber $M$. 

\begin{thmintro}[Theorems 4.2 and 4.3 of \cite{flores}] The causal boundary $\pa \mathcal M_c =\pa \mathcal M_c^- \cup \pa \mathcal M_c^+$ of a RW space-time $\mathcal M:=(0, T) \times_{\alpha} M$  has the following structure:
\begin{enumerate}
\item  \underline{Case $I_{-}(\alpha)=I_{+}(\alpha) = +\infty$}. If $M =\mathbb R^{d}$ or $\mathbb H^{d}$, then $\pa \mathcal M_c^-$ and $\pa \mathcal M_c^+$ are formed by two infinity null cones,  with base $\mathbb S^{d-1}$ and apex $i_-$ and $i_{+}$ respectively, where $i_+$ (resp. $i_{-}$) corresponds to the future (resp. past) timelike infinity. If $M =\mathbb S^{d}$, then $\pa \mathcal M_c^-$ and $\pa \mathcal M_c^+$ are just formed by $i_-$ and $i_{+}$ respectively.
\item\underline{Case $I_{-}(\alpha)<+\infty$, $I_{+}(\alpha) <+\infty$}. 
In that case, $\pa \mathcal M_c^-$ and $\pa \mathcal M_c^+$ are formed by two spacelike copies of $\mathbb R^{d}$ if $M =\mathbb R^{d}$ or $\mathbb H^{d}$,  or by two spacelike copies of $\mathbb S^{d}$ if $M = \mathbb S^{d}$.
\item \underline{Case $I_{-}(\alpha)<+\infty$, $I_{+}(\alpha) =+\infty$}.
If $M=\mathbb R^{d}$ or $\mathbb H^{d}$, then $\pa \mathcal M_c^+$ is formed by an infinity null
cone with base $\mathbb S^{d-1}$ and apex $i_+$,  and $\pa \mathcal M_c^-$ is formed by a spacelike copy $\mathbb R^{d}$.
If $M = \mathbb S^{d}$, then $\pa \mathcal M_c^+$ is just formed by $i_+$ and $\pa \mathcal M_c^-$ is formed by a spacelike copy of $\mathbb S^{d}$.
\item \underline{Case $I_{-}(\alpha)=+\infty$, $I_{+}(\alpha) <+\infty$}. 
If $M=\mathbb R^{d}$ or $\mathbb H^{d}$, then $\pa \mathcal M_c^-$ is formed by an infinity null
cone with base $\mathbb S^{d-1}$ and apex $i_-$,  and $\pa \mathcal M_c^+$ is formed by a spacelike copy $\mathbb R^{d}$.
If $M = \mathbb S^{d}$, then $\pa \mathcal M_c^-$ is just formed by $i_-$ and $\pa \mathcal M_c^+$ is formed by a spacelike copy of $\mathbb S^{d}$.
\end{enumerate}
\end{thmintro}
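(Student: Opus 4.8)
The plan is to reduce the computation to the case of a \emph{static} space-time by passing to conformal time, and then to invoke the known description of the causal boundary of standard static space-times in terms of Busemann-type completions of the Riemannian fiber. First I would fix $c\in(0,T)$, set $\eta(t):=\int_c^t du/\alpha(u)$ --- an increasing $C^2$ diffeomorphism of $(0,T)$ onto an interval $(\eta_-,\eta_+)$ with $\eta_-=-I_-(\alpha)$ and $\eta_+=I_+(\alpha)$ in $\overline{\mathbb R}$ --- and observe that in the coordinate $\eta$ the RW metric reads $g=\alpha^2(-d\eta^2+h)$, so that $\mathcal M$ is globally conformal, with time orientation preserved, to the static product $\mathcal M_0:=(\eta_-,\eta_+)\times M$ equipped with $g_0:=-d\eta^2+h$. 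Since the causal boundary of Geroch--Kronheimer--Penrose (see \cite{geroch,harris}) depends only on the conformal class of the metric and the time orientation, it suffices to compute $\partial(\mathcal M_0)_c^{\pm}$, and the alternative ``$I_{\pm}(\alpha)$ finite or infinite'' becomes the alternative ``$\eta_{\pm}$ finite or infinite''.

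Next I would analyse the future causal boundary of the static product as the set of terminal indecomposable past sets $I^-(\gamma)$, with $\gamma$ future-inextensible timelike, equipped with the chronological topology. Writing $\gamma(\eta)=(\eta,x_\eta)$, one has $(\eta_0,y)\in I^-(\gamma)$ iff $\eta_0<\sup_\eta(\eta-d_h(y,x_\eta))=:b_\gamma(y)$, so each such past set is encoded by the $1$-Lipschitz, $\mathbb R\cup\{+\infty\}$-valued function $b_\gamma$ on $M$, and indecomposability corresponds to $b_\gamma$ not being a pointwise minimum of two strictly smaller admissible functions. I expect three regimes: $b_\gamma\equiv+\infty$ gives $I^-(\gamma)=\mathcal M_0$ and a single apex $i_+$; a finite proper $b_\gamma$ records a boundary point ``at finite conformal height''; and Busemann functions of unit-speed rays of $(M,h)$ record the boundary points ``at spatial infinity'', which can occur only when $\eta_+=+\infty$. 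The past boundary is handled symmetrically through $\eta_-$, and, since the two are built independently, the four cases of the statement arise from the four combinations of the two dichotomies.

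Then I would make this explicit for the three model fibers. If $\eta_+<+\infty$, completeness of $M$ together with the bound $\int|\dot x_\eta|\,d\eta<\eta_+-\eta_0<\infty$ force $x_\eta$ to converge to some $x_*\in M$, the associated TIP is $\{(\eta_0,y):\eta_0<\eta_+-d_h(y,x_*)\}$, these sets are pairwise distinct and indecomposable, and no escape to spatial infinity is possible, so $\partial(\mathcal M_0)_c^+$ is the spacelike slice $\{\eta_+\}\times M$, i.e. a copy of $\mathbb R^d$, $\mathbb H^d$ or $\mathbb S^d$. If $\eta_+=+\infty$ and $M=\mathbb R^d$ or $\mathbb H^d$, sub-light or bounded curves give $b_\gamma\equiv+\infty$ and hence the apex, whereas light-speed escapes toward an ideal direction $\omega\in\mathbb S^{d-1}$ give $b_\gamma=\beta_\omega+\mathrm{const}$ with $\beta_\omega$ a Busemann function, so the proper TIPs are parametrised by $(\omega,\mathrm{const})\in\mathbb S^{d-1}\times\mathbb R$ and, with the apex adjoined, form the infinite null cone over $\mathbb S^{d-1}$ with vertex $i_+$. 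If $\eta_+=+\infty$ and $M=\mathbb S^d$ is compact, no escape occurs, every past equals $\mathcal M_0$, and $\partial(\mathcal M_0)_c^+=\{i_+\}$. Running the same discussion for the past yields the announced structure of $\partial\mathcal M_c$.

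The bookkeeping of past sets is routine; the hard part, and the step I expect to be the main obstacle, is to verify that the \emph{chronological topology} on the completion $\overline{\mathcal M_0}=\mathcal M_0\cup\partial(\mathcal M_0)_c$ is really the claimed one --- that the boundary carries the asserted cone (respectively spacelike-slice) topology and is glued correctly onto $\mathcal M_0$. This requires showing that $\gamma\mapsto I^-(\gamma)$ descends to the stated identifications (two light-speed escapes define the same boundary point at infinity iff they share the direction $\omega$ and the additive constant, and two curves define the same boundary point at finite height iff they share the limit $x_*\in M$), ruling out spurious boundary points or identifications via the decomposability analysis, and checking convergence in $\overline{\mathcal M_0}$ of sequences of interior points to the boundary points indexed above. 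For the hyperbolic fiber one also needs the fact that the Busemann completion of $\mathbb H^d$ is homeomorphic to the closed ball $\mathbb H^d\cup\mathbb S^{d-1}$, so that it produces the same cone base $\mathbb S^{d-1}$ as $\mathbb R^d$ despite the different Riemannian structures. All of this is carried out in detail in \cite{flores}, to which we refer for the complete argument.
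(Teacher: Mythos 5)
This statement is not proved in the paper at all: it is imported verbatim as Theorems 4.2 and 4.3 of \cite{flores}, so there is no internal argument to compare against. Your sketch reproduces exactly the strategy of that cited reference (conformal time change $t\mapsto\eta$ reducing $\mathcal M$ to the static product $(\eta_-,\eta_+)\times M$, conformal invariance of the Geroch--Kronheimer--Penrose construction, and the classification of terminal indecomposable past/future sets via Lipschitz and Busemann-type functions on the fiber, with the chronological-topology identifications deferred to \cite{flores}), so it is correct and consistent with how the paper itself treats the result.
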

We give now representative examples of each of the above cases. These examples are standard models in cosmology, they are obtained by solving Friedmann Equations under a constraint of the form $\mathfrak p= c \, \mathfrak q$ for some constant $c$, where $\mathfrak q$ and $\mathfrak p$ are the energy and pressure densities introduced in Remark \ref{rem.logconcave}.

\begin{exa}[Standard cosmological models] \label{exa.model}
\begin{enumerate}
\item The space-time $\mathcal M:= (0, +\infty) \times_{\alpha} \mathbb H^3$ where $\alpha(t):=t$ satisfies the condition
 $I_{-}(\alpha)=I_{+}(\alpha) = +\infty$. In that case, we have $\mathfrak p=\mathfrak q=0$, i.e. $\mathcal M$ solves Einstein's Equations in  vacuum.
\item The space-time $\mathcal M:= (0, 2) \times_{\alpha} \mathbb S^3$ where $\alpha(t):=\sqrt{t(2-t)}$ satisfies the conditions
$I_{-}(\alpha)<+\infty$ and $I_{+}(\alpha) <+\infty$. It corresponds to the state equation $\mathfrak p=\mathfrak q /3$, and it models a universe with spherical slices and radiation-dominated.
\item The space-time $\mathcal M:= (0, +\infty) \times_{\alpha} \mathbb R^3$ where $\alpha(t):=t^{2/3}$ satisfies the conditions
$I_{-}(\alpha)<+\infty$ and $I_{+}(\alpha) =+\infty$. It corresponds to the state equation $\mathfrak p=0$ and $\mathfrak q >0$, and it models a spatially flat universe  which is matter-dominated.
\item By choosing an appropriate coordinate system, De Sitter space-time can be written as a RW space-time $\mathcal M:= (0, +\infty) \times_{\alpha} \mathbb H^3$ where $\alpha(t):=\sinh(t)$. Thus, it satisfies the conditions $I_{-}(\alpha)=+\infty$ and $I_{+}(\alpha) <+\infty$. It corresponds to the state equation $\mathfrak p=-\mathfrak q >0$, and it models a universe  which is vacuum-dominated. De Sitter space-time solves Einstein's Equations in vacuum but with a positive cosmological constant. 
\end{enumerate}
\end{exa}

Having identified the structure of the causal boundary, it is possible to give meaning to the notion of convergence towards a point of the boundary, see \cite{flores}. 
Namely, since RW space-times are conformal to Minkowski space-time, and since the notion of causal boundary is conformally invariant, the convergence to the causal boundary in RW space-times can be deduced from the convergence to causal boundary in Minkowski space-time $\mathbb R^{1,d}$. In that case, one can show (see e.g. Section 6.8 and 6.9 of \cite{hawell}) that the future causal boundary can be identified with $\mathbb R^+ \times \mathbb S^{d-1}$, an inextingible future oriented curve $(t_s,x_s) \in \mathbb R^{1,d}$ converging to the boundary if and only if $t_s$ goes to infinity with $s$, $$\theta_s:=\frac{x_s}{|x_s|} \to \theta_{\infty} \in \mathbb S^{d-1}, \;\; \hbox{and} \;\; \delta_s:=t_s-\langle x_s, \theta_{\infty} \rangle \to \delta_{\infty} \in \mathbb R^+,$$
where $\langle \cdot \, , \cdot \rangle$ and $|\cdot |$ denote the Euclidean scalar product and norm in $\mathbb R^{d}$. In other words, the Euclidean curve $x_s \in \mathbb R^{d}$ goes to infinity in the direction $\theta_{\infty} \in \mathbb S^{d-1}$, and the space-time curve $(t_s,x_s)$ goes to infinity in the same direction along the affine hyperlane
$$\Pi(\delta_{\infty}, \theta_{\infty}):=\left \lbrace  (t,x) \in (0,T)\times_{\alpha} \mathbb R^{d}, \;\; t-\langle x, \theta_{\infty} \rangle = \delta_{\infty}\right \rbrace,$$
which is parallel to the tangent hyperplan to the lightcone and containing the direction $\theta_{\infty}$, see Fig. \ref{fig.minko} below.

\begin{figure}[ht]
\hspace{3.5cm}\includegraphics[scale=0.5]{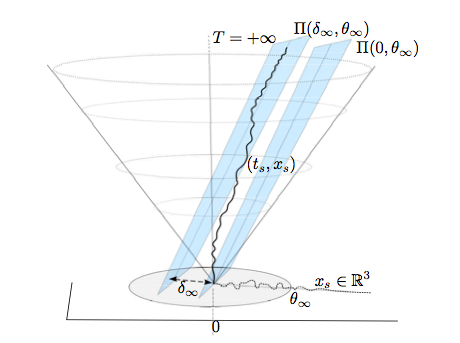}
\caption{Convergence to the causal boundary identified with $\mathbb R^+ \times \mathbb S^2$ in Minkowski space-time.}\label{fig.minko}
\end{figure}
\par
\smallskip 
Performing the time change $$t \to \int_{t_0}^t du /\alpha(u)$$ then allows to deduce the typical behavior of a curve converging to the future causal boudary in RW space-times of the type $\mathbb R^+ \times_{\alpha} \mathbb R^{d}$, and similarly for the space-times of the form $\mathbb R^+ \times_{\alpha} \mathbb H^{d}$ or $\mathbb R^+ \times_{\alpha} \mathbb S^{d}$ after a conformal change of the radial variable in polar coordinates.
Of course, as the boundary itself heavily depends on the choice of the torsion function $\alpha$ and the Riemannian fiber $M$, the resulting notion of convergence differs drastically depending on the type of RW space-time considered. Here are some examples to illustrate this phenomenon. 
\newpage
\begin{exa}[Convergence to the causal boundary] \textcolor{white}{blank}\par\label{exa.cbound}
\begin{enumerate}
\item If $I_{+}(\alpha) <+\infty$, a future oriented timelike curve $(\xi_s)_{s \geq 0}=(t_s,x_s)_{s \geq 0} \in \mathcal M:=(0, T) \times_{\alpha} M$ converges to a point $\xi_{\infty}$ on $\pa \mathcal M_c^+$ iff $t_s \to T$ and $x_s$ converges to a point $x_{\infty}$ in $M$, so that the boundary $\pa \mathcal M_c^+$ can be identified with $\{T\} \times M$, and $\xi_{\infty}$  with $(T,x_{\infty})$.
\item If $M=\mathbb R^{d}$ and  $I_{+}(\alpha) =+\infty$, from the above theorem we have $\pa \mathcal M_c^+\backslash \{ i_{+}\}=\mathbb R^+ \times \mathbb S^{d-1}$. In that case, a future oriented timelike curve $(\xi_s)_{s \geq 0}=(t_s,x_s)_{s \geq 0} \in \mathcal M:=(0, T) \times_{\alpha} \mathbb R^d$ converges to a point $\xi_{\infty}$ on $\pa \mathcal M_c^+ \backslash \{ i_{+}\}$ if and only if 
$$t_s \to T, \quad \theta_s:=\frac{x_s}{|x_s|} \to \theta_{\infty} \in \mathbb S^{d-1}, \;\; \hbox{and} \;\; \delta_s:=\int_{t_0}^{t_s} \frac{du}{\alpha(u)}-\langle x_s, \theta_{\infty} \rangle \to \delta_{\infty} \in \mathbb R^+,$$
where, as above, $\langle \cdot \, , \cdot \rangle$ and $|\cdot |$ denote the Euclidean scalar product and norm in $\mathbb R^{d}$.
In other words, the Euclidean curve $x_s \in \mathbb R^{d}$ goes to infinity in the direction $\theta_{\infty} \in \mathbb S^{d-1}$, and the curve $\xi_s$ goes to infinity in the same direction along the hypersurface 
\[
\Sigma(\delta_{\infty}, \theta_{\infty}):=\left \lbrace  (t,x) \in (0,T)\times_{\alpha} \mathbb R^3, \;\; \int_{t_0}^t  \frac{du}{\alpha(u)}-\langle x, \theta_{\infty} \rangle = \delta_{\infty}\right \rbrace.\]
This hypersurface is parallel to the hypersurface $\Sigma(0, \theta_{\infty})$ containing the curve $(t,\theta_{\infty}\int_{t_0}^t  \frac{du}{\alpha(u)})$ and orthogonal to $\theta_{\infty}$, see Fig. \ref{fig.compactRW} below. In that case, the limiting point $\xi_{\infty}$ on $\pa \mathcal M_c^+$ can be identified with $(T, \delta_{\infty}, 
\theta_{\infty})$. 
\begin{figure}[ht]
\hspace{3cm}\includegraphics[scale=0.5]{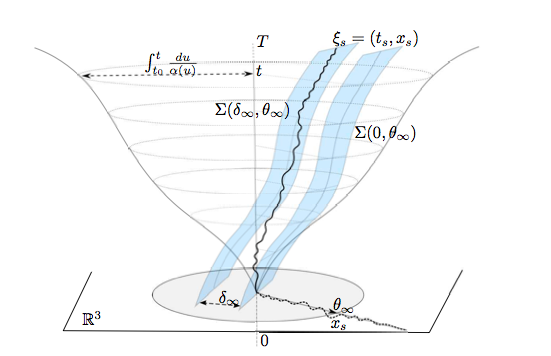}
\caption{Convergence towards the causal boundary in the case $M=\mathbb R^3$ and  $I_{+}(\alpha) =+\infty$.}\label{fig.compactRW}
\end{figure}
\item If $M=\mathbb H^{d}$ viewed as the half-sphere of the Minkowski space $\mathbb R^{1,{d}}$, and if $I_{+}(\alpha) =+\infty$, from the above theorem we have again $\pa \mathcal M_c^+\backslash \{ i_{+}\}=\mathbb R^+ \times \mathbb S^{d-1}$, and this time, a future oriented timelike curve $(\xi_s)_{s \geq 0}=(t_s,x_s)_{s \geq 0} =(t_s,\sqrt{1+r^2_s}, r_s \theta_s)_{s \geq 0} \in \mathcal M:=(0, T) \times_{\alpha} \mathbb H^{d}$ converges to $\xi_{\infty}$ on $\pa \mathcal M_c^+\backslash \{ i_{+}\}$ iff 
\[
t_s \to T, \quad r_s \to +\infty, \quad \theta_s \to \theta_{\infty} \in \mathbb S^{d-1}, \;\; \hbox{and} \;\; \delta_s:=\int_{t_0}^{t_s} \frac{du}{\alpha(u)}-\ash(r_s)  \to \delta_{\infty} \in \mathbb R^+.
\]
\end{enumerate}
\end{exa}
\subsection{The relativistic diffusion in Robertson-Walker spacetimes}\label{sec.RWdiffusion}
Let us now describe the stochastic process introduced in \cite{flj}, which we will call the \emph{relativistic diffusion} in the sequel, and which generalizes Dudley's diffusion to the realm of General Relativity. Consider a general Lorentzian manifold $(\mathcal M, g)$ of dimension $d+1$, i.e. a differentiable manifold equipped with a  pseudo-metric of signature $(-, +, \ldots, +)$. The relativistic diffusion is a diffusion process that takes values in the tangent bundle of $(\mathcal M, g)$. Its sample paths $(\xi_s, \dot{\xi}_s)$ are time-like curves that are future directed and parametrized by the arc length $s$ so that the diffusion actually live on the positive part of the unitary tangent bundle of the manifold, that we simply denote by $T^1_+ \mathcal M$.
The infinitesimal generator of the diffusion can be written 
\begin{equation}\mathcal L:= \mathcal L_0 + \frac{\sigma^2}{2 } \Delta_{\mathcal V},\end{equation}
where the differential operator $\mathcal L_0$ generates the geodesic flow on $T^1 \mathcal M$, $\Delta_{\mathcal V}$ is the vertical Laplacian, and $\sigma > 0$ is a real parameter. Equivalently, if $\xi^{\mu}$ is a local chart on $\mathcal M$ and if $\Gamma_{\nu \rho}^{\mu}$ denote the usual Christoffel symbols, the relativistic diffusion is the solution of the following system of stochastic differential equations (in It\^o form), for $0 \leq \mu \leq d=\mathrm{dim} (\mathcal M)$ :
\begin{equation}\label{eqn.flj}
\left \lbrace \begin{array}{l}
\displaystyle{ d \xi^{\mu}_s = \dot{\xi}_s^{\mu} ds}, \\
\displaystyle{ d \dot{\xi}^{\mu}_s= -\Gamma_{\nu \rho}^{\mu}(\xi_s)\, \dot{\xi}^{\nu}_s \dot{\xi}^{\rho}_s ds + d \times \frac{\sigma^2}{2}\, \dot{\xi}^{\mu}_s ds+ \sigma  d M^{\mu}_s}, 
\end{array}\right.
\end{equation}
where the braket of the martingales $M^{\mu}_s$ is given by
$$\langle dM_s^{\mu}, \;  dM_s^{\nu}\rangle = (\dot{\xi}^{\mu}_s \dot{\xi}^{\nu}_s +g^{\mu \nu}(\xi_s))ds.$$
Moreover, since the sample paths are parametrized by the arc length $s$, we have the pseudo-norm relation:
\begin{equation}\label{eqn.pseudo}
g_{\mu \nu}(\xi_s) \dot{\xi}^{\mu}_s \dot{\xi}^{\nu}_s =-1.
\end{equation}

\begin{rem}In the limit case when the parameter $\sigma$ is choosen to be zero, Equations (\ref{eqn.flj}) are nothing but the geodesics equations: 
$$ \displaystyle{ \frac{d \xi^{\mu}_s}{ds} = \dot{\xi}_s^{\mu} },  \quad
\displaystyle{ \frac{d\dot{\xi}^{\mu}_s}{ds}= -\Gamma_{\nu \rho}^{\mu}(\xi_s)\, \dot{\xi}^{\nu}_s \dot{\xi}^{\rho}_s },$$
so that, the sample paths of the relativistic diffusion can really be thought as random perturbations of timelike geodesics.
\end{rem}

For example, in the case of a spatially flat RW space-time $\mathcal M=I\times_{\alpha} \mathbb R^{d}$, equipped with the canonical global coordinates $(\xi^0, \xi^1, \ldots, \xi^{d})=(t, x^1, \ldots, x^{d})$, the metric is $g_{\mu \nu} = \mathrm{diag}(-1, \alpha^2(t), \ldots, \alpha^2(t))$, and the only non vanishing Christoffel symbols are $\Gamma_{i\,i}^0 = \alpha(t) \alpha'(t)$, and $\Gamma_{0\, i}^i = H(t)$ for $i=1, \ldots, {d}$. 
Thus, if $|\dot{x}_s|$ denote the usual Euclidean norm of $\dot{x}_s$ in $\mathbb R^{d}$,  Equations (\ref{eqn.flj}) simply reads 
\begin{equation}\label{eqn.flj.eucli}
\left \lbrace \begin{array}{ll}
\displaystyle{d t_s=\dot{t}_s ds}, & \quad \displaystyle{d \dot{t}_s = - \alpha(t_s) \: \alpha'(t_s) |\dot{x}_s|^2 ds + \frac{{d} \sigma^2}{2} \dot{t}_s ds + \sigma d M^{\dot{t}}_s,} \\
\\
\displaystyle{d x^i_s = \dot{x}_s^i ds}, &  \quad \displaystyle{d \dot{x}^i_s = \left(-2 H(t_s) \dot{t}_s + \frac{{d} \sigma^2}{2}\right)  \dot{x}^i_s\, ds + \sigma dM^{\dot{x}^i}_s},
\end{array}\right. 
\end{equation}
where
$$
\left \lbrace \begin{array}{l}
\displaystyle{d \langle M^{\dot{t}}, \, M^{\dot{t}} \rangle_s =  \left( \dot{t}_s^2-1 \right) ds, \quad d \langle M^{\dot{t}}, \, M^{\dot{x}^i} \rangle_s =  \dot{t}_s \dot{x}^i_s ds,} \\
\\
\displaystyle{d \langle M^{\dot{x}^i}, \, M^{\dot{x}^j} \rangle_s = \left(\dot{x}^i_s \dot{x}^j_s + \frac{\delta_{ij}}{\alpha^2(t_s)}  \right) ds}.
\end{array}\right.$$
In the case of a RW space-time, the pseudo-norm relation (\ref{eqn.pseudo}) can be written  
\begin{equation}
\label{eqn.pseudorw}-\dot{t}_s^2+\alpha^2(t_s) h(\dot{x}_s, \dot{x}_s)=-1.
\end{equation}

\section{Statement of the results}\label{sec.results}

Having introduced the geometric and probabilistic backgrounds, we can now state our results concerning the long-time asymptotic behavior of the relativistic diffusion on RW space-times.  When non elementary, the proofs of these results are postponed in Section \ref{sec.proofs}.


\subsection{Existence, uniqueness, reduction of the dimension}\label{sec.exiuni}

The following proposition ensures that, in the case of a RW space-time, the system of stochastic differential equations (\ref{eqn.flj}) admits a unique solution, and it exhibits a lower dimensional sub-diffusion that will facilitate its study. 
 
\begin{prop}\label{pro.exiuni}
Let $\mathcal M:=(0, T) \times_{\alpha} M$ be a RW space-time satisfying the hypotheses of Section \ref{sec.RW}.
For any $(\xi_0, \dot{\xi}_0)=(t_0, x_0, \dot{t}_0, \dot{x}_0) \in T^1_+ \mathcal M$, Equations (\ref{eqn.flj}) and (\ref{eqn.pseudo})
admit a unique strong solution $(\xi_s, \dot{\xi}_s)=(t_s, x_s, \dot{t}_s, \dot{x}_s)$ starting from $(\xi_0, \dot{\xi}_0)$, which is well defined up to the explosion time $\tau:=\inf \{ s>0, \, t_s= T\}$. If $T<+\infty$, this explosion time is finite almost surely whereas if $T=+\infty$, $\ta$ is almost surely infinite. Moreover, the temporal process $(t_s, \dot{t}_s)_{s \geq 0}$ is a two-dimensional  sub-diffusion. 
\end{prop}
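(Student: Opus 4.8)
The plan is to treat in turn local well-posedness, the temporal sub-diffusion, and the nature of the explosion time, the last point being where the real work lies.

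\emph{Local existence and uniqueness.} I would work in the coordinates $(t,x,\dot x)$ on the $(2d+1)$-dimensional manifold $(0,T)\times TM$, which is identified with $T^1_+\mathcal M$: since $h$ is Riemannian and the diffusion is future-directed, the pseudo-norm relation (\ref{eqn.pseudorw}) forces $\dot t_s=\sqrt{1+\alpha^2(t_s)\,h(\dot x_s,\dot x_s)}$, a smooth function of $(t,x,\dot x)$. In these coordinates the coefficients of (\ref{eqn.flj}) are smooth, so the general construction of the relativistic diffusion recalled in \cite{flj} (stochastic development of the hyperbolic Brownian motion along the vertical fibres) provides a unique maximal strong solution, defined up to the first exit time $\zeta$ from an exhausting sequence of compact subsets of $(0,T)\times TM$; the constraint (\ref{eqn.pseudo}) is propagated because It\^o's formula and the bracket relations give $d\big(g_{\mu\nu}(\xi_s)\dot\xi^\mu_s\dot\xi^\nu_s\big)=0$.

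\emph{The temporal sub-diffusion.} The point is that the equation for $(t_s,\dot t_s)$ closes. For the warped metric the only Christoffel symbols with upper index $0$ are $\Gamma^0_{ij}=\alpha(t)\alpha'(t)\,h_{ij}$, so $\Gamma^0_{\nu\rho}\dot\xi^\nu_s\dot\xi^\rho_s=\alpha(t_s)\alpha'(t_s)\,h(\dot x_s,\dot x_s)=H(t_s)\big(\dot t_s^2-1\big)$ by (\ref{eqn.pseudorw}); moreover $\langle dM^0_s\rangle=(\dot t_s^2-1)\,ds$ since $g^{00}=-1$. Writing $M^0_s=\int_0^s\sqrt{\dot t_u^2-1}\,d\beta_u$ for a standard real Brownian motion $\beta$, the pair $(t_s,\dot t_s)$ solves the autonomous system
\[
dt_s=\dot t_s\,ds,\qquad d\dot t_s=\Big(\tfrac{d\sigma^2}{2}\,\dot t_s-H(t_s)\big(\dot t_s^2-1\big)\Big)ds+\sigma\sqrt{\dot t_s^2-1}\;d\beta_s,
\]
whose coefficients are locally Lipschitz on $(0,T)\times(1,+\infty)$ (the square-root degeneracy at $\dot t=1$ being harmless by standard one-dimensional pathwise-uniqueness arguments); hence $(t_s,\dot t_s)$ is a genuine two-dimensional diffusion, with generator $\dot t\,\partial_t+\big(\tfrac{d\sigma^2}{2}\dot t-H(t)(\dot t^2-1)\big)\partial_{\dot t}+\tfrac{\sigma^2}{2}(\dot t^2-1)\,\partial_{\dot t}^2$.

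\emph{The explosion time.} From $\dot t_s\ge 1$ the coordinate $t_s$ is strictly increasing with $t_s\ge t_0+s$, so, setting $L:=\lim_{s\uparrow\zeta}t_s\le T$, one has $\zeta\le\int_0^\zeta\dot t_u\,du=L-t_0$; in particular $\zeta<+\infty$ whenever $L<+\infty$, hence whenever $T<+\infty$. It remains to show $L=T$ and, when $T=+\infty$, that $\zeta=+\infty$ — i.e. that $\zeta=\tau$. Suppose $L<T$; then $L<+\infty$, $\zeta\le L-t_0<+\infty$, and on $[0,\zeta)$ the process $t_s$ lives in the compact $[t_0,L]\subset(0,T)$, on which $|H|\le C$ and $\alpha\ge a_0>0$. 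Applying It\^o's formula to $\log\dot t_s$ and using $\tfrac{\dot t^2-1}{\dot t}\le\dot t$, the bounded-variation part is controlled by $C\int_0^\zeta\dot t_u\,du+(\tfrac{d\sigma^2}{2}+\tfrac{\sigma^2}{2})\zeta=C(L-t_0)+(\tfrac{d\sigma^2}{2}+\tfrac{\sigma^2}{2})\zeta<+\infty$, while the martingale part $\sigma\int_0^s\tfrac{\sqrt{\dot t_u^2-1}}{\dot t_u}\,d\beta_u$ has bracket $\le s\le\zeta<+\infty$ and hence converges a.s. as $s\uparrow\zeta$; therefore $\dot t_s$ stays bounded on $[0,\zeta)$. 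Then $h(\dot x_s,\dot x_s)=(\dot t_s^2-1)/\alpha^2(t_s)$ is bounded, the curve $x_s$ has finite $h$-length hence remains in a compact of $M$, and $(t_s,x_s,\dot x_s)$ stays in a fixed compact of $(0,T)\times TM$ for all $s<\zeta$ — contradicting the definition of $\zeta$. Hence $L=T$. When $T=+\infty$ one still has to exclude $\zeta<+\infty$: there Hypothesis \ref{hypo.1} gives $H\ge0$, so the drift of $\log\dot t_s$ is $\le\tfrac{d\sigma^2}{2}$, whence $\log\dot t_s\le\log\dot t_0+\tfrac{d\sigma^2}{2}\zeta+(\text{an a.s.\ convergent martingale})<+\infty$ on $[0,\zeta)$; thus $\dot t_s$, and then $t_s$, is bounded on $[0,\zeta)$, contradicting $L=+\infty$. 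So $\zeta=\tau$, finite a.s. if $T<+\infty$ and infinite a.s. if $T=+\infty$.

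The main obstacle is precisely the identification $\zeta=\tau$ in the Big-Crunch case: a priori the velocity $\dot\xi_s$ — equivalently $\dot t_s$, equivalently $h(\dot x_s,\dot x_s)$ — might explode while $t_s$ stays bounded away from $T$, and the quadratic term $-H(t_s)(\dot t_s^2-1)$ in its drift (with $H<0$ near $T$) defeats a naive Gr\"onwall estimate; the way out is the elementary but crucial remark that $t_s<T<+\infty$ together with $\dot t_s\ge1$ forces $\dot t_s$ to be integrable on $[0,\zeta)$, which is exactly what is needed to control the logarithmic derivative of $\dot t_s$.
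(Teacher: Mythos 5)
Your argument is correct, and up to the reduction it follows the paper: the closure of the temporal pair $(t_s,\dot t_s)$ via $\Gamma^0_{ij}=\alpha\alpha' h_{ij}$ and the pseudo-norm relation, and the remark that $\alpha^2(t_s)h(\dot x_s,\dot x_s)\le\dot t_s^2$ prevents the spatial components from exploding before $\dot t_s$, are exactly the paper's steps. Where you genuinely diverge is the lifetime analysis. The paper proves that $\dot t_s$ cannot blow up before $t_s$ reaches $T$ by the change of variables $a_s=\alpha(t_s)\sqrt{\dot t_s^2-1}$: the SDE for $a_s^2$ has locally Lipschitz coefficients of linear growth while $t_s\le n\wedge(T-1/n)$, so a standard non-explosion criterion applies (Lemma \ref{LEM.UNIEXITEMP}); for $T=+\infty$ it then bounds $\dot t_s$ by comparison with the constant-coefficient diffusion for $H\equiv0$, shown non-explosive through an explicit exponential representation (Lemmas \ref{LEM.COMPARPROJ} and \ref{LEM.CASSTAT}). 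You instead localize in $t$ and run an It\^o estimate on $\log\dot t_s$, the key observation being that $\int_0^\zeta\dot t_u\,du=\lim_{s\to\zeta}t_s-t_0$ is finite whenever $t_s$ stays bounded, which absorbs the quadratic drift $-H(t_s)(\dot t_s^2-1)$ even in the Big-Crunch regime where $H\to-\infty$ at $T$, while for $T=+\infty$ the sign $H\ge0$ alone caps the drift of $\log\dot t_s$. Your route is more elementary and self-contained (no comparison lemmas, no external non-explosion criterion); the paper's change of variables buys two things used later: it removes the $1/2$-H\"older degeneracy of $\sigma\sqrt{\dot t^2-1}$ at $\dot t=1$, giving clean pathwise uniqueness including from $\dot t_0=1$, and it introduces $a_s$ and the comparison machinery exploited throughout Section \ref{sec.temp}. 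The only thin spot in your write-up is precisely that degeneracy: dismissing it by ``standard one-dimensional pathwise-uniqueness arguments'' is loose for the two-dimensional pair with path-dependent drift; either invoke the substitution $(t,\dot t)\mapsto(t,a^2)$ as the paper does, or observe that in your own reduced coordinates $(t,x,\dot x)$ the coefficients are smooth and $\dot t=\sqrt{1+\alpha^2(t)h(\dot x,\dot x)}$ is determined by the constraint, so the issue never arises for the full system.
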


\begin{rem}
In the sequel, given a point $(\xi_0, \dot{\xi}_0)\in T^1 \mathcal M$, the unique solution $(\xi_s, \dot{\xi}_s)_{0\leq s <\tau }$ of Equations (\ref{eqn.flj}) and (\ref{eqn.pseudo}) will be called the relativistic diffusion starting from $(\xi_0, \dot{\xi}_0)$. We will denote by $\mathbb P_{0}$ its law; unless otherwise stated, the words ``almost surely'' will mean $\mathbb P_0-$almost surely. 
\end{rem}

\begin{proof}
Recall that from Hypotheses 1, the torsion function $\alpha$ is positive and of class $C^2$ on the interval $(0, T)$. Therefore, on this interval, the metric $g$ and its inverse, the Christoffel symbols $\Gamma$, hence the coefficients of Equations (\ref{eqn.flj}) are smooth functions and classical results ensure existence and uniqueness of the solution until explosion\footnote{Actually, the diffusion coefficient of $\dot{t}_s$ is only $1/2-$H\"older but a simple change of variable leads to locally Lipschitz coefficients, hence classical results mentionned above apply, see Lemma \ref{LEM.UNIEXITEMP} and its proof below.} (see for example Theorem 2.3 p. 173 of \cite{ikeda} and Theorem 1.1.9 of \cite{hsu}). Making explicit the Christoffel symbols, the first two equations of (\ref{eqn.flj})  read 
$$\left \lbrace \begin{array}{ll} \displaystyle{d t_s=\dot{t}_s ds},  \\ \displaystyle{d \dot{t}_s = - \alpha(t_s) \: \alpha'(t_s) h(\dot{x}_s, \dot{x}_s)  ds + \frac{{d} \sigma^2}{2} \dot{t}_s ds + \sigma d M^{\dot{t}}_s,} \end{array} \right. \;\; \hbox{with} \;\;  d \langle M^{\dot{t}}, \, M^{\dot{t}} \rangle_s =  \left( \dot{t}_s^2-1 \right) ds.$$
Using the pseudo-norm relation (\ref{eqn.pseudorw}), we have $h(\dot{x}_s, \dot{x}_s) = \alpha^{-2}(t_s) \left( \dot{t}_s^2-1 \right)$ so that
$$d \dot{t}_s = - H(t_s) \left( \dot{t}_s^2-1 \right) ds + \frac{{d} \sigma^2}{2} \dot{t}_s ds + d M^{\dot{t}}_s,$$
and the couple $(t_s, \dot{t}_s)$ is indeed a sub-diffusion of dimension two of the whole process $(\xi_s, \dot{\xi}_s)$.
From the pseudo-norm relation again, we have $\alpha^2(t_s) h(\dot{x}_s, \dot{x}_s)  \leq \dot{t}_s^2$ so that the spatial components
 $(x_s, \dot{x}_s)$ cannot explode before $\dot{t}_s$, at least if $t_s < T$. Thus, the assertion on the lifetime of the global diffusion is a consequence of Lemmas \ref{LEM.UNIEXITEMP} and \ref{LEM.LIFETIME} below  which show in particular that the lifetime of the temporal process $(t_s, \dot{t}_s)_{s \geq 0}$ is almost surely infinite in the case $T=+\infty$ whereas it is almost surely finite in the case $T<+\infty$.
\end{proof}

\subsection{Entrance law at the origin of time}\label{sec.entree}
Since the geometry of space time at the ``origin of time'' $t=0$  is of crucial importance in cosmology, it is natural to investigate if the relativistic diffusion can be started from a point of the form $(t_0=0, x_0, \dot{t}_0, \dot{x}_0)$ living on an ``entrance boundary'' of  $T^1\mathcal M$ where  $\mathcal M=(0,T) \times_{\alpha} M$. Of course, the natural candidate for this entrance boundary is the causal boundary $\pa \MM_c^{-}$ corresponding to past oriented causal curves. From Section \ref{sec.RW}, one knows that the nature of $\pa \MM_c^{-}$ depends on the finiteness of $I_{-}(\alpha)$. \par
\smallskip
In the case $I_{-}(\alpha)=+\infty$, the boundary $\pa \MM_c^{-}$ is ``really'' a boundary at infinity, in particular it has no a priori tangent structure and even in the deterministic case i.e. considering the geodesic flow on $T^1 \mathcal M$, one can show that the data of a point on $\pa \MM_c^{-}$ is not sufficient to uniquely determine a time-like geodesic curve in $\mathcal M$.
On the contrary, in the case $I_{-}(\alpha)<+\infty$, the boundary $\pa \MM_c^{-}$ identifies with $\{ 0\} \times M$ and inherits the tangent structure of $M$. In particular, given a point $(x_0, \dot{x}_0) \in T^1 M$, there exists a unique time-like and past oriented geodesic curve with constant normalized derivative $\dot{x}_0$, and which converges to  $(0,x_0) \in \pa \MM_c^{-}$, see Sect. III.4.3 of \cite{mathese}. 

\newpage
The relativistic diffusion being a perturbation of the geodesic flow, the above deterministic considerations indicate that the right framework to consider to define a process starting from the entrance boundary is the one where $I_{-}(\alpha)<+\infty$, i.e. when geodesics have finite past horizon. In that case, one can indeed show that Equations (\ref{eqn.flj}) and (\ref{eqn.pseudo}) admit a solution starting from $t_0=0$ provided $\dot{t}_0=+\infty$ or more precisely provided the product $\alpha(t_0)^2 (\dot{t}_0^2-1)$ is positive and finite. So let us consider the new variables
\[
t \in (0,T), \quad a= \alpha(t) \sqrt{\dot{t}^2-1} \in (0,+\infty) , \quad x \in M, \quad \hbox{and}\;\;  \dot{x}/|\dot{x}| \in T_x^1 M,
\]
where $|\dot{x}|$ denote the square root of $h(\dot{x},\dot{x})$ to simplify the expressions. Note that, from Equation (\ref{eqn.pseudorw}), the norm $|\dot{x}|$ can be written as a function of $t$ and $a$ so that $(t,a,x,\dot{x}/|\dot{x}|)$ is a coordinate system on $T^1 \mathcal M$. 

\begin{prop}\label{pro.entreeglobal}
Fix $t_0=0$, $a_0>0$ and $(x_0,\dot{x}_0/|\dot{x}_0|) \in T^1 M$. Then, in the above coordinate system, the stochastic differential equation system (\ref{eqn.flj}) admits a unique strong solution $(t_s, a_s, x_s, \dot{x}_s/|\dot{x}_s|)$ starting from $(t_0, a_0, x_0, \dot{x}_0/|\dot{x}_0|)$ and which is well defined up to the explosion time $\ta=\inf \{s>0, \; t_s = T\}$.
\end{prop}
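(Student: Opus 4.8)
The plan is to reduce everything, as far as possible, to Proposition~\ref{pro.exiuni}. Away from the origin of time there is nothing new: the map $(t,x,\dot t,\dot x)\mapsto(t,a,x,\dot x/|\dot x|)$, with $a=\alpha(t)\sqrt{\dot t^2-1}=\alpha^2(t)\,|\dot x|$, is a smooth diffeomorphism of $\{(t,x,\dot t,\dot x)\in T^1_+\mathcal M:\ 0<t<T,\ \dot t>1\}$ onto $(0,T)\times(0,+\infty)\times T^1M$, so It\^o's formula turns \eqref{eqn.flj} into a stochastic differential system whose coefficients are smooth on $\{t>0,\ a>0\}$, and for $t_0\in(0,T)$, $a_0>0$ existence and uniqueness up to $\tau$ follow from Proposition~\ref{pro.exiuni}. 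The whole content of the statement is thus that, under the assumption $I_-(\alpha)<+\infty$, the hypersurface $t=0$ is an \emph{entrance} boundary for this system.

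First I would make the rewriting explicit. Applying It\^o's formula to $a$, starting from the temporal equation of Proposition~\ref{pro.exiuni}, the two drift contributions of order $H(t_s)\,\dot t_s\,a_s$ cancel, and one is left with
\begin{equation}\label{eqn.aeq}
da_s=\Bigl(\tfrac{d\sigma^2}{2}\,a_s+c\,\frac{\alpha^2(t_s)}{a_s}\Bigr)ds+\sigma\sqrt{a_s^2+\alpha^2(t_s)}\;dB_s,\qquad dt_s=\frac{\sqrt{a_s^2+\alpha^2(t_s)}}{\alpha(t_s)}\,ds,
\end{equation}
for a real Brownian motion $B$ and a constant $c$; hence $(t_s,a_s)$ is an autonomous two-dimensional diffusion, and $(x_s,\dot x_s/|\dot x_s|)$ solves equations driven by $(t_s,a_s,B)$ and an independent spherical Brownian motion, with coefficients controlled by $|\dot x_s|=a_s/\alpha^2(t_s)$. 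Near the point $(t,a)=(0,a_0)$ the coefficients of \eqref{eqn.aeq} are bounded and Lipschitz in $a$, \emph{except} for the factor $1/\alpha(t_s)$ in $dt_s$: when $\alpha(0^+)=0$ (the Big-Crunch case, and the case $T=+\infty$ with $\alpha$ vanishing at the origin) this factor blows up, i.e. $\dot t_0=+\infty$, and the difficulty is that $t$ leaves $0$ with infinite speed.

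To circumvent this I would reparametrise the whole system by the time coordinate $t$ itself, which is licit since $\dot t_s\geq1$ makes $s\mapsto t_s$ a strictly increasing continuous bijection of $[0,\tau)$ onto $[0,T)$; writing $\Phi$ for its inverse, one has $\Phi(0)=0$ and $s=\Phi(t)=\int_0^t\alpha(u)\,(\alpha^2(u)+\hat a(u)^2)^{-1/2}\,du$, where $\hat a(t):=a_{\Phi(t)}$, and so on. In the $t$-clock the radial equation becomes
\begin{equation}\label{eqn.aeqt}
d\hat a(t)=\Bigl(\tfrac{d\sigma^2}{2}\,\frac{\alpha(t)\,\hat a(t)}{\sqrt{\alpha^2(t)+\hat a(t)^2}}+\cdots\Bigr)dt+\sigma\,\alpha(t)^{1/2}\bigl(\alpha^2(t)+\hat a(t)^2\bigr)^{1/4}\,d\hat B_t,
\end{equation}
for a time-changed Brownian motion $\hat B$; both coefficients now vanish as $t\downarrow0$ and, because the initial value $a_0$ is strictly positive, they are Lipschitz in $\hat a$ uniformly for $t$ in a neighbourhood of $0$ (the sole loss of regularity being a H\"older, hence harmless, dependence on $t$). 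Consequently \eqref{eqn.aeqt} has on a small interval $t\in[0,\delta]$ a unique strong solution issued from $a_0$, which stays close to $a_0$ --- in particular positive --- and satisfies $\Phi(t)\to0$. The spatial variables are then obtained by integrating, in the $t$-clock, the equations driven by $(\hat a,\hat B)$ for $\hat x$ and $\hat v:=\dot x/|\dot x|$; their coefficients are $O(1/\alpha(t))$ as $t\downarrow0$, hence integrable on $[0,\delta]$ \emph{exactly because} $I_-(\alpha)<+\infty$, which forces $\hat x(t)\to x_0$ and $\hat v(t)\to\dot x_0/|\dot x_0|$ --- this is the one place the hypothesis is genuinely used --- and yields uniqueness on $[0,\delta]$ as well.

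Finally I would concatenate this solution at $t=\delta$ with the one furnished by Proposition~\ref{pro.exiuni} on $\{t\geq\delta\}$, read everything back through the increasing time change $s=\Phi(t)$, and identify the explosion time with $\tau=\inf\{s>0:\ t_s=T\}$; global pathwise uniqueness then follows by localising away from $t=0$, where Proposition~\ref{pro.exiuni} applies, together with the uniqueness on $[0,\delta]$, and strong existence follows since the coefficients involved are locally Lipschitz in the state variables. The main obstacle is precisely the behaviour at the origin: there $\dot t_s$ --- and, in the Big-Crunch case, the Hubble function $H(t_s)=\alpha'(t_s)/\alpha(t_s)$ --- blow up, so the drift of the original system is genuinely singular; the device that resolves this is the reparametrisation by $t$, after which the radial equation becomes a well-posed SDE whose coefficients merely vanish at $t=0$, while the spatial displacement accumulated near the origin is finite if and only if $I_-(\alpha)<+\infty$.
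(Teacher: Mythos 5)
Your two-step structure (the temporal pair $(t_s,a_s)$ first, then the spatial components) is exactly the one the paper announces, and several of your intermediate points are correct: the $H$-terms do cancel in the It\^o equation for $a_s=\alpha(t_s)\sqrt{\dot t_s^{\,2}-1}$ (compare the equation for $a_s$ in the proof of Lemma \ref{LEM.UNIEXITEMP} and Equation (\ref{eqn.tadeux})), for the temporal pair the only singularity at the origin sits in $\dot t_s=\sqrt{1+a_s^2/\alpha^2(t_s)}$ when $\alpha(0^+)=0$, and the hypothesis $I_-(\alpha)<+\infty$ is used only to make the spatial displacement, of order $1/\alpha(t)$ per unit of coordinate time, integrable near $t=0$. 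Bear in mind, though, that the paper itself only gives this two-step outline and refers to Propositions IV.5 and IV.6 of \cite{mathese} for the details, so what follows assesses what your argument actually establishes.

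The genuine gap is in the uniqueness and in the ``strong solution'' part of the claim, which is precisely where the difficulty of the proposition lies. Your construction in the $t$-clock produces a solution adapted to the time-changed Brownian motion $\hat B$; mapping back through $s=\Phi(t)$ then yields a solution of (\ref{eqn.flj}) with respect to \emph{some} Brownian motion, i.e.\ a weak solution, and Lipschitz uniqueness in the $t$-clock gives at best uniqueness in law. It does not give pathwise uniqueness: if two solutions issued from $(0,a_0)$ are driven by the same Brownian motion in the $s$-clock, their time changes $\Phi$ and $\Phi'$ differ as soon as the solutions differ, so in the $t$-clock they are driven by two a priori different Brownian motions and your Lipschitz comparison no longer applies to them. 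The fallback you invoke, namely that ``strong existence follows since the coefficients involved are locally Lipschitz in the state variables'', is false exactly at the starting point: when $\alpha(0^+)=0$ the coefficient $\dot t_s=\sqrt{1+a_s^2/\alpha^2(t_s)}$ of the original $s$-parametrized system blows up at $t=0$, which is the whole point of the statement. There is also a minor (fixable) slip: on a fixed interval $t\in[0,\delta]$ the solution of your $t$-clock equation does not stay close to $a_0$ almost surely; positivity and boundedness must be obtained by localisation up to an exit time, after which one continues with Proposition \ref{pro.exiuni}. So, as written, your argument gives existence and uniqueness in law near the entrance point, but the asserted unique \emph{strong} solution requires an additional argument at $t=0$ (for instance a direct pathwise comparison of two solutions near $s=0$ exploiting the specific structure, or an approximation from interior initial conditions $t_\varepsilon\downarrow 0$ with controls uniform in $\varepsilon$), which is missing.
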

The proof of Proposition \ref{pro.entreeglobal} goes into two steps: first show existence and uniqueness for the temporal process starting form $t_0=0$ and $a_0>0$, and then deduce existence and uniqueness for the spatial components. It can be found in \cite{mathese}, see Propositions IV.5 and IV.6 of Sect. IV.4.
\begin{rem}
If $T<+\infty$ and both $I_{-}(\alpha)$ and $I_+(\alpha)$ are finite, the manifold $\mathcal M:=(0, T) \times_{\alpha} M$ is geodesically incomplete and the lifetime of the relativistic diffusion is finite almost surely. Nevertheless, different copies of $\mathcal M$ can be concatenated by identifying the exit boundary of the one and the entrance boundary of the other (a Big-Bang follow a Big-Crunch and so on...) to form a ``pearl necklace'' of RW space-time which can then be identified with $\mathbb R^+ \times_{\alpha} M$. In this setting, one can show that excursions of the relativistic diffusion started from $\{0\} \times M$  and ending almost surely on $\{T\} \times M$ can also be concatenated to form a diffusion on the ``necklace" $\mathbb R^+ \times_{\alpha} M$, see Part II, Chap. VII of \cite{mathese} .
\end{rem}


\subsection{Convergence towards the causal boundary} \label{sec.conv}

We can now state our main result concerning the almost sure convergence of the projection of the relativistic diffusion on the base manifold $\mathcal M$ to a random point of its causal boundary $\pa \MM_c^+$.
\begin{thm}\label{theo.causal}
Let $\mathcal M:=(0, T) \times_{\alpha} M$ be a RW space-time satisfying the hypotheses of Sect. \ref{sec.RW} and let $(\xi_s, \dot{\xi}_s)=(t_s, x_s, \dot{t}_s, \dot{x}_s)$ be the relativistic diffusion starting from $(\xi_0, \dot{\xi}_0) \in T^1_+ \MM$. Then almost surely, as $s$ goes to $\ta=\inf \{ s>0, \, t_s= T\}$, the projection $\xi_s$ converges towards a random point of the causal boundary $\pa \MM_c^+$ of $\MM$. 
\end{thm}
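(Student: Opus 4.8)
The plan is to exploit the two-dimensional temporal sub-diffusion $(t_s,\dot t_s)$ provided by Proposition \ref{pro.exiuni} and to reduce the convergence of $\xi_s$ to the causal boundary to three separate assertions, matching the description of $\partial\mathcal M_c^+$ recalled in Section \ref{sec.RW}: first, that $t_s\to T$ as $s\to\tau$ (which is essentially the content of the lifetime statement in Proposition \ref{pro.exiuni}); second, in the case $I_+(\alpha)<+\infty$, that the spatial projection $x_s$ converges to a point $x_\infty\in M$; and third, in the case $I_+(\alpha)=+\infty$, that the rescaled direction $\theta_s:=x_s/|x_s|$ (or $r_s/|r_s|$ in the hyperbolic/spherical polar picture) converges to some $\theta_\infty\in\mathbb S^{d-1}$ and that the ``retarded time'' $\delta_s:=\int_{t_0}^{t_s}du/\alpha(u)-\langle x_s,\theta_\infty\rangle$ converges to a finite limit $\delta_\infty\ge 0$, in accordance with Example \ref{exa.cbound}. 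Since RW space-times are conformal to (a piece of) Minkowski space-time and the causal boundary is conformally invariant, it suffices to work in the conformal time variable $T_s:=\int_{t_0}^{t_s}du/\alpha(u)$, in which the base geometry becomes that of $\mathbb R\times M$ with $M$ conformally Euclidean.

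First I would treat the spatial behavior. From \eqref{eqn.flj} the spatial momentum $\dot x_s$ satisfies a linear SDE with drift $(-2H(t_s)\dot t_s+d\sigma^2/2)\dot x^i_s$ and bracket $(\dot x^i_s\dot x^j_s+\delta_{ij}/\alpha^2(t_s))ds$; combined with the pseudo-norm relation $\alpha^2(t_s)h(\dot x_s,\dot x_s)=\dot t_s^2-1$, this lets me control the quantity $q_s:=\alpha(t_s)|\dot x_s|=\sqrt{\dot t_s^2-1}$, which is a functional of the temporal sub-diffusion alone. The natural move is to pass to the conformal time parametrization and write the spatial displacement as $x_s-x_0=\int_0^s \dot x_u\,du=\int_0^s \frac{q_u}{\alpha(t_u)^2}\,\widehat v_u\,du$ where $\widehat v_u:=\dot x_u/|\dot x_u|\in T^1M$ is a unit vector; equivalently, in conformal time $x$ moves at unit speed times a martingale-perturbed direction. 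The key quantitative input is the asymptotics of the temporal process: one shows (using the SDE $d\dot t_s=-H(t_s)(\dot t_s^2-1)ds+\tfrac{d\sigma^2}{2}\dot t_s ds+dM^{\dot t}_s$ together with log-concavity of $\alpha$, i.e. $H'\le 0$) that $\int_0^\tau \frac{q_s}{\alpha(t_s)^2}\,ds$ is finite when $I_+(\alpha)<+\infty$ — forcing $x_s$ to converge — and is infinite, but with a controlled rate, when $I_+(\alpha)=+\infty$. In the latter case $|x_s|\to\infty$ and one must show the direction $\theta_s$ converges: this follows by a standard argument showing that $\theta_s$ is, up to time change, a martingale on the sphere whose quadratic variation is integrable, because the angular part of the motion picks up only a factor $1/|x_s|^2$ relative to the radial part and $\int^\tau \frac{q_s}{\alpha(t_s)^2|x_s|^2}\,ds<+\infty$. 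Convergence of $\delta_s$ then follows from an integration-by-parts/Gronwall estimate comparing $\langle x_s,\theta_\infty\rangle$ with the conformal time $T_s$, using $|x_s-T_s\theta_\infty|=o(\text{something summable})$.

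The decisive ingredient throughout is a sharp understanding of the temporal sub-diffusion $(t_s,\dot t_s)$ near $s=\tau$, and this is where the cases genuinely diverge. When $T<+\infty$ (Big-Crunch), $H(t_s)\to-\infty$ and one must show the drift $-H(t_s)(\dot t_s^2-1)$ drives $\dot t_s$ to $+\infty$ fast enough that $\tau<+\infty$ a.s.\ while keeping $\int^\tau q_s/\alpha(t_s)^2\,ds$ (hence the spatial length in conformal time, which is $I_+(\alpha)<+\infty$ here) finite — a Lyapunov/comparison argument on $\dot t_s$, presumably the content of Lemmas \ref{LEM.UNIEXITEMP} and \ref{LEM.LIFETIME}. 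When $T=+\infty$ the behavior of $\dot t_s$ bifurcates according to Definition \ref{defi.2}: for polynomial growth $\dot t_s$ stays bounded in a suitable sense and $T_s\to+\infty$, for exponential growth ($H_\infty>0$) the process $\dot t_s$ is positive recurrent-like and $T_s$ converges or diverges depending on $I_+$, etc. I expect the main obstacle to be precisely this case analysis for the temporal diffusion: establishing the correct a.s.\ growth/convergence rate of $\dot t_s$ and the convergence or divergence of $\int^\tau q_s/\alpha(t_s)^2\,ds$ (and of its angular-weighted analogue $\int^\tau q_s/(\alpha(t_s)^2|x_s|^2)\,ds$) under only log-concavity plus Hypothesis \ref{hypo.2}, uniformly across all the geometric cases. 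Once these one-dimensional estimates are in hand, the convergence of $\theta_s$ and $\delta_s$ is a routine martingale-plus-Gronwall affair, and identifying the limit with a point of $\partial\mathcal M_c^+$ is just bookkeeping against the classification theorem of \cite{flores} and the convergence criteria spelled out in Example \ref{exa.cbound}.
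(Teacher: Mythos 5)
Your plan sets out to prove much more than the statement asks, and it follows a genuinely different route from the paper's own proof, which is essentially soft: RW space-times are globally hyperbolic, hence strongly causal, and by the Geroch--Kronheimer--Penrose construction a future ideal point of $\partial\mathcal M_c^+$ is attached to \emph{every} inextendible future-directed causal curve; so the only probabilistic input needed is that the sample paths are inextendible, i.e.\ that the lifetime is exactly $\tau=\inf\{s>0,\ t_s=T\}$, which is Lemma \ref{LEM.UNIEXITEMP}. The paper's proof of Theorem \ref{theo.causal} is two lines. All the quantitative material you outline (temporal asymptotics, convergence of the clock, convergence of the direction and of the retarded time $\delta_s$) does appear in the paper, but as the subsequent refinements -- Propositions \ref{pro.horizon}, \ref{pro.eudlicausal}, \ref{pro.hypercausal} and Theorem \ref{theo.cercasymp} -- whose purpose is to identify \emph{which} boundary point is reached and in which coordinates, not to prove the bare convergence statement. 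So your route, if completed, would buy the finer geometric description at the cost of all the one-dimensional estimates, whereas the abstract argument gives the theorem as stated almost for free from inextendibility.

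Within your route there is one genuine gap in the case analysis: your reduction for $I_+(\alpha)=+\infty$ -- show $\theta_s=x_s/|x_s|$ converges and $\delta_s$ converges -- is false for the spherical fiber. When $M=\mathbb S^{d}$ and $I_+(\alpha)=+\infty$, the direction does not converge at all: by Theorem \ref{theo.cercasymp} the path $x_s$ asymptotically sweeps a random great circle. The correct observation is that in this case nothing spatial needs to be proved, since $\partial\mathcal M_c^+=\{i^+\}$ and $t_s\to T$ already gives convergence; as written, your plan would be trying to establish a false intermediate claim there. Two further inaccuracies in the same part of the plan: $\theta_s$ is not, up to time change, a martingale on the sphere -- the object with that property is the normalized derivative $\dot x_s/|\dot x_s|$ (a time-changed spherical Brownian motion in the flat case, with clock $C_s$ as in Corollary \ref{cor.clock}), and in the hyperbolic case the convergence of $\theta_s$ is deduced from radial dominance ($c_s/a_s\to1$ in Proposition \ref{pro.hypercausal}), not from a martingale property of $\theta_s$; and in the polynomial-growth case $\dot t_s$ does not ``stay bounded'' -- it is transient with exponential rate (Proposition \ref{PRO.ASYMPC}); what matters is only the a.s.\ convergence of the clock. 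Finally, finiteness of $\tau$ when $T<+\infty$ requires no Lyapunov or comparison argument: $\dot t_s\ge 1$ gives $t_s\ge t_0+s$, and the spatial length bound follows directly from $|\dot x_s|\le \dot t_s/\alpha(t_s)$ and $I_+(\alpha)<+\infty$, exactly as in Proposition \ref{pro.horizon}.
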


\begin{rem}
In fact, we prove a more precise result: if the curvature of the Riemannian fiber $M$ is non-negative, then $\xi_s$ always converges to a random point of $\pa \MM_c^+ \backslash \{i^+\}$, whereas in the compact case $M=\mathbb S^d$, $\xi_s$ converges to a random point of $\pa \MM_c^+ \backslash \{i^+\}$ or to $i^+$ depending on the finiteness of $I_+(\alpha)$.
\end{rem}
\noindent
As already mentionned in the introduction, the geometry of the causal boundary strongly reflects the one of the base manifold so that the above theorem actually covers a huge variety of geometric asymptotic behaviors, depending on the type of RW space-times considered. The proof of Theorem \ref{theo.causal} and explicit examples of convergence are given in Section 4.\par
\begin{exa}
\begin{figure}[ht]
\hspace{1cm}\includegraphics[scale=0.45]{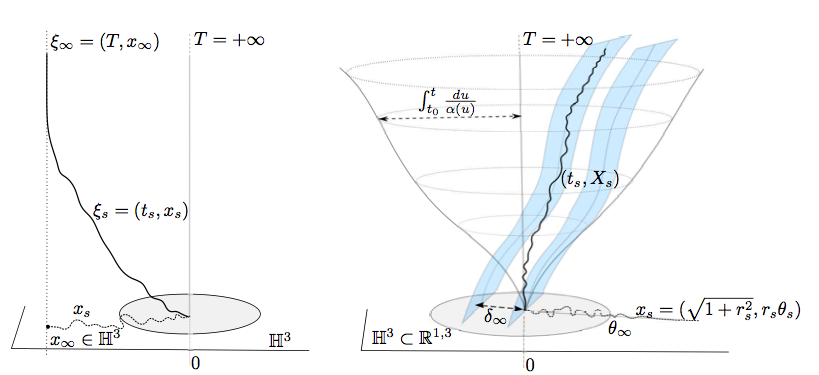}
\caption{Typical behavior of the projection $\xi_s$ in $\MM=]0, +\infty[ \times_{\a} \mathbb H^3$ when $I_+(\alpha) < +\infty$ (left) and $I_+(\alpha) =+\infty$ (right).}
\end{figure}
The figure above represents the typical behaviors of the process $\xi_s=(t_s,x_s)$ in the hyperbolic case 
 $\MM=]0, +\infty[ \times_{\a} \mathbb H^3$, depending on the finiteness of the integral $I_+(\alpha)$. In the case when $I_+(\alpha) < +\infty$ and when $s$ goes to infinity, the projection $x_s$ of $\xi_s$ in $\mathbb H^3$ converges almost surely to a random point $x_{\infty} \in \mathbb H^3$ and $t_s$ goes to infinity, so that the process $\xi_s=(t_s,x_s)$  asymptotically describes a line and converges to the point $(T=+\infty, x_{\infty})$ of the causal boundary $\pa \MM_c^+$ according to the first point of Example \ref{exa.cbound}. On the contrary, in the case when $I_+(\alpha) = +\infty$, the projection $x_s \in \mathbb H^3$ is transient. Namely, using the standard polar decompostion $x_s=(\sqrt{1+r^2_s}, r_s \theta_s)$ in $\mathbb H^3$, $r_s$ goes to infinity with $s$ and $\theta_s$ converges almost surely to a random point $\theta_{\infty} \in \mathbb S^2$. Moreover, the first projection $t_s$ also goes to infinity with $s$ and the process $(t_s, X_s)$ where $X_s :=(\ash(\sqrt{1+r^2_s}), \ash(r_s) \theta_s)$ goes to infinity along a random hypersurface, i.e. the process $\xi_s=(t_s, x_s)$ converges to a random point of $\pa \MM_c^+$ according to the third point Example \ref{exa.cbound}.
 \end{exa}

\begin{rem}
The proof of Theorem \ref{theo.causal} actually shows that not only the sample paths of the process $\xi_s$ converge to random points on the causal boundary but they are in fact asymptotic to random lightlike geodesics. Therefore Theorem \ref{theo.causal} goes in the direction of \cite{flj}'s statement that for a general Lorentz manifold, the Poisson boundary i.e. the set of bounded harmonic functions can be characterized by classes of light rays i.e. null geodesics.
\end{rem}


\subsection{Asymptotic behavior of the derivative} \label{sec.derivative}

In order to give a complete picture of the almost sure asymptotic behavior of the whole relativistic diffusion  $(\xi_s, \dot{\xi}_s)$ in RW space-times, we now specify the asymptotic behavior of $\dot{\xi}_s \in T^1_{\xi_s}$.

\subsubsection{Asymptotics of the temporal components}
\smallskip
The next theorem shows that the asymptotic behavior of the temporal derivative $\dot{t}_s$ is governed by the growth of the torsion function $\alpha$ at the endpoint $T$. Recall that in the case $T=+\infty$, the following limit $H_{\infty}:=\lim_{t \to +\infty} H(t)=\lim_{t \to +\infty}  \alpha'(t)/\alpha(t)$, exists and is non-negative. 

\begin{thm}\label{theo.causalderivetemp}
Let $\mathcal M:=(0, T) \times_{\alpha} M$ be a RW space-time satisfying the hypotheses of Sect. \ref{sec.RW} and let $(\xi_s, \dot{\xi}_s)=(t_s, x_s, \dot{t}_s, \dot{x}_s)$ be the relativistic diffusion starting from $(\xi_0, \dot{\xi}_0) \in T^1_+ \MM$. Then, when $s$ goes to $\ta=\inf \{ s>0, \, t_s= T\}$, we have the following asymptotic behaviors: 
\begin{enumerate}[$i)$]
\item if $T<+\ii$, $\dot{t}_s$ is almost surely transient;
\item if $T=+\ii$, $H_{\ii}=0$ and the expansion is at most polynomial, $\dot{t}_s$ is almost surely transient;
\item if $T=+\ii$, $H_{\ii}=0$, and the expansion is subexponential, $\dot{t}_s$ converges to $+\infty$ in probability;	
\item if $T=+\ii$ and $H_{\ii}>0$, $\dot{t}_s$ is Harris recurrent in $(1, +\ii)$ almost surely;
\end{enumerate}
\end{thm}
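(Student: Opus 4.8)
The starting point is the one-dimensional reduction established in Proposition \ref{pro.exiuni}: writing $r_s:=\dot t_s\in(1,+\infty)$, the temporal process solves the autonomous stochastic differential equation
\[
dr_s=-H(t_s)(r_s^2-1)\,ds+\frac{d\,\sigma^2}{2}\,r_s\,ds+dM^{\dot t}_s,\qquad d\langle M^{\dot t}\rangle_s=(r_s^2-1)\,ds,
\]
coupled with $dt_s=r_s\,ds$. The idea is to treat this as a (time-inhomogeneous, because of the $H(t_s)$ term) diffusion on $(1,+\infty)$ and to perform the classical Feller-type analysis: change the space variable to remove the martingale bracket, compute a scale function and a speed measure, and then read off transience vs.\ recurrence from their finiteness at the boundary $r=+\infty$. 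Concretely, I would substitute a variable such as $\rho=\mathrm{argch}(r)$ (or work directly with $\sqrt{r^2-1}$), so that the diffusion coefficient becomes constant; the drift then becomes $-H(t_s)\sinh\rho\cosh\rho\cdot(\ldots)+\ldots$, and in the time variable the key quantity controlling everything is $H(t_s)$, which along the paths behaves like $H$ evaluated at a time that itself grows at rate $r_s$.

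The crucial observation making the four cases work is the coupling between $t_s$ and $r_s$: since $dt_s=r_s\,ds\ge ds$, the clock $t_s$ runs at least as fast as $s$, so the decay of $H(t_s)$ is felt. I would split into the stated regimes. When $T<+\infty$, Hypothesis \ref{hypo.1}(3b) gives $H(t)\to-\infty$ as $t\to T^-$, so the drift $-H(t_s)(r_s^2-1)$ becomes strongly \emph{positive} near the explosion time; a comparison/Lyapunov argument with the function $r$ (or $\log r$) then forces $r_s\to+\infty$, i.e.\ transience. When $T=+\infty$ with at most polynomial growth, $H(t)\to0$ and moreover (Definition \ref{defi.2}) $H$ decays fast enough — here the relevant integral test on the scale function diverges at $+\infty$ in the transient direction, essentially because $\int^\infty H(t)\,dt/\alpha\text{-type quantities}$ are controlled by $I_+(\alpha)$-like integrals; I would make this precise by estimating $\int^\infty H(t_s)\,ds$ along paths and showing the confining drift is too weak to prevent escape. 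When $T=+\infty$ and $H_\infty>0$ (exponential growth), the drift $-H_\infty(r_s^2-1)$ is a genuine restoring force of constant strength pulling $r_s$ back towards $1$, while the noise $(r_s^2-1)\,ds$ grows like $r_s^2$; balancing these gives a non-degenerate invariant-type behavior, and I would invoke the standard Harris recurrence criterion (a Lyapunov function like $V(r)=\log r$ or $r$ with $\mathcal L V\le -c<0$ for large $r$, plus irreducibility and a petite/small set on compacts of $(1,+\infty)$) to conclude Harris recurrence. The subexponential case is the genuinely delicate one: here $H_\infty=0$ so there is no constant restoring force, yet $H$ decays only slowly (logarithmically, cf.\ Example \ref{exa.hypo2}), so neither a clean transience nor a clean recurrence statement holds — one only gets convergence to $+\infty$ in probability, not almost surely. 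I expect this to be the main obstacle: one must show that $r_s$ is pushed to $+\infty$ in probability (via a first-moment or martingale estimate on $\log r_s$ using $\int^s H(t_u)\,du\to\infty$ slowly) while simultaneously exhibiting that Hypothesis \ref{hypo.2}, through $\liminf(-H'/H^2)=0$, produces times where the confinement is momentarily strong enough to bring $r_s$ back down along a subsequence, ruling out almost sure convergence. Handling this dichotomy requires careful two-sided estimates on the occupation of the drift term $\int_0^s H(t_u)(r_u^2-1)\,du$ and is where most of the real work — omitted here and deferred to \cite{mathese} — would go.

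Finally, throughout I would need the preliminary Lemmas \ref{LEM.UNIEXITEMP} and \ref{LEM.LIFETIME} (lifetime of $(t_s,\dot t_s)$) to justify that the diffusion is defined on the whole relevant time interval, and the explicit change of variable removing the $1/2$-H\"older coefficient so that the standard one-dimensional diffusion theory (scale, speed, Harris criterion) applies rigorously on $(1,+\infty)$.
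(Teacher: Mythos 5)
The central obstacle your plan does not address is that $\dot t_s$ is \emph{not} a Markov process on its own: its drift involves $H(t_s)$, and $t_s$ is transient, so the one-dimensional scale-function/speed-measure machinery, and likewise the Lyapunov/petite-set Harris criterion you invoke for case $iv)$, cannot be applied to $\dot t_s$ directly (the paper explicitly treats it as a non-Markovian process in Proposition \ref{PRO.ERGO}). The substitute idea, which is the real engine of the proof and is absent from your proposal, is monotone comparison: since $H$ is nonincreasing, $\dot t_s$ is sandwiched, $z^n_s\le \dot t_s\le z_s$, between solutions of the \emph{autonomous} equations with constant Hubble parameters $H(t_0+n)$ and $H_\infty$ (Lemma \ref{LEM.COMPARPROJ}), which are ergodic with explicit invariant laws $\nu_{H,\sigma}$ (Lemma \ref{LEM.CASEXPO}). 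Harris recurrence and the ergodic averages in case $iv)$, the in-probability transience in case $iii)$ (Proposition \ref{PRO.TRANSPROB}), and the control of $\int_0^s \dot t_u^{-2}\,du$ needed for case $ii)$ are all obtained by letting $n\to\infty$ (resp.\ the constant Hubble parameter $\eta\to 0$) in this sandwich. For case $ii)$ your sketch (``integral test on the scale function'', estimating $\int^\infty H(t_s)\,ds$ along paths) offers no workable replacement for the precise almost sure rate $\log\bigl(\alpha(t_s)\dot t_s\bigr)=\frac{d-1}{2}\sigma^2 s+o(s)$ of Lemma \ref{lem.control.exp}, which is exactly what converts the polynomial-growth hypothesis $\limsup_t \log\alpha(t)/\log\bigl(\int^t\alpha\bigr)<1$ into transience of $\dot t_s$ (Corollary \ref{cor.trans.poly}).

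Two further gaps. For case $i)$, positivity of the drift near the explosion time is not sufficient: $\tau<+\infty$ almost surely, and a strongly positive drift acting over a vanishing remaining time does not by itself force $\dot t_s\to+\infty$. The paper's proof (Proposition \ref{PRO.CONVA}) instead shows that $a_s=\alpha(t_s)\sqrt{\dot t_s^2-1}$ converges almost surely to a positive finite limit — the delicate point being the control of the time spent near $\dot t=1$, where the It\^o integrands for $\log(\dot t_s^2-1)$ blow up — and then uses $\alpha(t_s)\to 0$ from the Big-Crunch hypothesis; the transience is a geometric effect of $\alpha$ vanishing at $T$, not a pure drift effect, and this step is missing from your plan. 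For case $iii)$, you have misread the statement: point $iii)$ only asserts convergence to $+\infty$ in probability, which holds as soon as $H_\infty=0$ and is the easy part (comparison with the $\eta$-ergodic diffusion and $\nu_{\eta,\sigma}([R,+\infty))\to 1$ as $\eta\to 0$). You propose in addition to \emph{rule out} almost sure convergence in the subexponential case; this is not part of the theorem and is false in general, since by Proposition \ref{pro.H3ssi} one has $\dot t_s\to+\infty$ almost surely if and only if $H^d$ is integrable, which does occur for subexponential expansions (e.g.\ $\alpha(t)=\exp(t^\beta)$ with $\beta<2/3$ and $d=3$). Thus the portion of your plan you identify as the main difficulty is aimed at a statement that is neither needed nor true.
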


The proof of Theorem \ref{theo.causalderivetemp} is given in Section \ref{sec.temp} below. More precisely, the recurrence property in the case when $H_{\infty}>0$ is proved in Proposition \ref{PRO.ERGO}, whereas the transient cases are treated in Propositions \ref{PRO.CONVA} and  \ref{PRO.TRANSPROB} and corollary \ref{cor.trans.poly} respectively. \par
\smallskip

In the case where $T=+\ii$, $H_{\ii}=0$ and the expansion is subexponential, Theorem \ref{theo.causalderivetemp} only states a convergence in probability and it is tempting to ask if this convergence holds almost surely or not. The next proposition gives a quite surprising necessary and sufficient condition in term of the rate of decrease of the Hubble function.

\begin{prop} \label{pro.H3ssi}If the growth rate of the torsion function is subexponential, then the process $\dot{t}_s$ goes almost surely to infinity with $s$ if and only if the function $H^d$ is integrable at infinity.
\end{prop}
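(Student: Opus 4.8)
\noindent
The plan is to work with the temporal sub-diffusion $(t_s,\dot t_s)$, which by Proposition~\ref{pro.exiuni} satisfies $dt_s=\dot t_s\,ds$ together with
\[
d\dot t_s=-H(t_s)(\dot t_s^2-1)\,ds+\frac{d\sigma^2}{2}\,\dot t_s\,ds+\sigma\,dM^{\dot t}_s,\qquad d\langle M^{\dot t},M^{\dot t}\rangle_s=(\dot t_s^2-1)\,ds,
\]
and to decide whether $\dot t_s\to+\infty$ $\mathbb P_{0}$-almost surely, knowing already from Theorem~\ref{theo.causalderivetemp}~$iii)$ that $\dot t_s\to+\infty$ in probability; note that the convergence in probability already forces $\dot t_s$ to be above every fixed level infinitely often, $\mathbb P_{0}$-a.s. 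The algebraic fact I would exploit is that, after multiplying by the warping factor, the quadratic drift cancels: setting $Z_s:=\alpha(t_s)\dot t_s$ and using $\alpha'=H\alpha$, one gets
\[
dZ_s=\sigma\sqrt{Z_s^2-\alpha^2(t_s)}\;d\widehat B_s+\Bigl(\alpha(t_s)H(t_s)+\frac{d\sigma^2}{2}\,Z_s\Bigr)\,ds
\]
for some Brownian motion $\widehat B$. I would then perform the geodesic time change $u=u_s:=\int_0^s Z_r\,dr=\int_{t_0}^{t_s}\alpha(v)\,dv$ and write $\dot t_s=Z(u_s)/\phi(u_s)$, where $\phi:=\alpha\circ A^{-1}$ with $A(t):=\int_{t_0}^t\alpha$, so that $\phi$ is deterministic, $\phi'=H\circ A^{-1}$, and $u_s\to+\infty$ a.s. In the $u$-clock, $Z$ is a one-dimensional time-inhomogeneous diffusion with explicit coefficients which, in the region $\dot t_s\gg1$, is $(\sigma^2/4)$ times a squared Bessel process of dimension $2d$; since $2d\ge6$ this process is transient, and the whole question reduces to: does the Bessel-type process $Z(u)$ outrun every deterministic multiple of the barrier $\phi(u)$ as $u\to+\infty$?

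\smallskip\noindent
This is exactly the scope of an integral test of Dvoretzky--Erd\"os type for the lower envelope. Since $u\mapsto \phi(u)/u=\alpha(t)/A(t)$ is nonincreasing (a consequence of the log-concavity of $\alpha$, which also supplies the regularity the test needs), one expects $Z(u)/\phi(u)\to+\infty$ $\mathbb P_{0}$-a.s. if and only if $\int^{+\infty}\bigl(\phi(u)/u\bigr)^{d-1}u^{-1}\,du<+\infty$, and the change of variables $u=A(t)$, $du=\alpha(t)\,dt$ turns this into $\int^{T}\bigl(\alpha(t)/A(t)\bigr)^{d}\,dt<+\infty$. It then remains to show that
\[
\int^{T}\Bigl(\frac{\alpha(t)}{\int_{t_0}^{t}\alpha}\Bigr)^{d}dt<+\infty\quad\Longleftrightarrow\quad\int^{T}H(t)^{d}\,dt<+\infty .
\]
An integration by parts (in the spirit of (\ref{eqn.loglog})) gives $A(t)=\alpha(t)/H(t)-C_0-\int_{t_0}^{t}(-H'/H^2)\,\alpha$ with $C_0>0$ and $-H'/H^2\ge0$; hence $\alpha(t)/A(t)\ge H(t)$, which gives one implication immediately. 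For the converse, Hypothesis~\ref{hypo.2} is what is needed: in the subexponential regime $-H'/H^2$ is bounded, so $\int_{t_0}^{t}(-H'/H^2)\,\alpha\le C+K\,A(t)$ for some $K<+\infty$, and reinjecting this into the identity above yields $\alpha(t)/A(t)\le(1+K+o(1))\,H(t)$; thus $\alpha/A\asymp H$ and the two integrals are finite together.

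\smallskip\noindent
The hard part will be the rigorous justification of the integral test, precisely because the reduction to a squared Bessel process is only asymptotic: the corrections $\sqrt{Z^2-\alpha^2}-Z$ and $\alpha H$ are negligible against $Z$ only when $\dot t_s$ is large. I would therefore localize the comparison to the excursions of $\dot t_s$ above a large threshold $A$, on which the time-changed process can be pinched between two squared Bessel processes of dimensions $2d$ and $2d+\varepsilon(A)$ with $\varepsilon(A)\to0$, and then stitch these local estimates together using that $\dot t_s$ returns above every level infinitely often, a last-exit/Borel--Cantelli argument, and the fact that (the barrier $\alpha/A$ being bounded) the integral test is stable under the perturbation $\varepsilon(A)$, with some additional care in the borderline case $\int^{T}H^{d}=+\infty$ obtained by letting $A\to+\infty$. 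A secondary technical point is checking the mild regularity hypotheses under which the Dvoretzky--Erd\"os lower-envelope test applies to the random clock $u_s$ and the deterministic envelope $\phi$; verifications of this kind are carried out in detail in \cite{mathese}.
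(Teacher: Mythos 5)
Your reduction of the problem is appealing and your deterministic bookkeeping is sound: the identity obtained by differentiating $\alpha/H$ indeed gives $\alpha(t)/A(t)\geq H(t)$ with $A(t)=\int_{t_0}^t\alpha$, and Hypothesis \ref{hypo.2} (boundedness of $-H'/H^2$ in the subexponential regime) gives the reverse bound $\alpha/A\leq (1+\kappa+o(1))H$, so that $\int^T(\alpha/A)^d\,dt<\infty$ iff $\int^T H^d\,dt<\infty$; this mirrors the integrations by parts the paper itself performs. Likewise, the observation that $Z_s=\alpha(t_s)\dot t_s$ in the clock $u=A(t_s)$ is a time-inhomogeneous one-dimensional diffusion which, far from the barrier, resembles $(\sigma^2/4)$ times a squared Bessel process of dimension $2d$, and that the Dvoretzky--Erd\"os test then predicts exactly the criterion $\int^T H^d<\infty$, is a correct and illuminating heuristic. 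But the probabilistic core of the proposition is precisely the validity of this integral test for the \emph{actual} process, and that step is not proved: you defer it to a ``pinching between squared Bessel processes of dimensions $2d$ and $2d+\varepsilon(A)$'' plus a stitching argument, and this is where the proposal has a genuine gap. Standard comparison theorems (Ikeda--Watanabe) require identical diffusion coefficients, whereas here both the drift \emph{and} the diffusion coefficient ($\sigma\sqrt{Z^2-\phi^2(u)}/\sqrt{Z}$ versus $\sigma\sqrt{Z}$) differ from the Bessel ones; removing the discrepancy by a further random time change destroys the deterministic character of the barrier $\phi(u)$ that the envelope test needs. Worse, the Bessel approximation is only valid where $\dot t_s\gg 1$, i.e. far from the barrier $Z=\phi$, while the lower-envelope question is decided exactly by the excursions down to $\dot t_s=O(1)$, where the coefficients degenerate in a non-Bessel way ($\sqrt{Z^2-\phi^2}$ vanishes at $\dot t=1$).

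The asserted ``stability of the test under the perturbation $\varepsilon(A)$'' also fails in the direction you need it most. For the divergence half (the ``only if''), domination of $Z$ by a BESQ of dimension $2d+\varepsilon$ would require divergence of $\int^T H^{d+\varepsilon}\,dt$, but $\int^T H^d\,dt=\infty$ does not imply this: the borderline case $H(t)\asymp t^{-1/d}$ has $\int^T H^d=\infty$ while $\int^T H^{d+\varepsilon}<\infty$ for every $\varepsilon>0$, and this critical regime is exactly the one the proposition must decide; ``letting $A\to+\infty$'' does not resolve it without $A$-uniform quantitative hitting estimates, i.e. essentially redoing the Dvoretzky--Erd\"os proof with the exact inhomogeneous coefficients. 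The paper avoids all of this by a different route: It\^o's formula applied to $f(x)=-\int_x^{\infty}(u^2-1)^{-d/2}du$ and $g(x)=-\int_x^{\infty}u^{2-d}(u^2-1)^{-1}du$ yields the clean pathwise criterion of Proposition \ref{prop.ssi} (a.s. transience iff $\int_0^{\infty}H(t_s)\dot t_s^{2-d}ds<\infty$ a.s.); the key quantitative input is then Lemma \ref{lem.encadre}, proved by sandwiching $v_s=H(t_s)(\dot t_s+\sqrt{\dot t_s^2-1})$ between two \emph{ergodic} one-dimensional diffusions with the same diffusion coefficient (drift control coming from Hypothesis \ref{hypo.2}) and invoking the ergodic theorem; finally Lemma \ref{lem.control.exp} and deterministic integrations by parts convert the criterion into the finiteness of $\int^{t_s}H^d$. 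If you want to salvage your approach, you would need to replace the Bessel pinching by scale-function/supermartingale estimates for the true inhomogeneous generator valid up to the barrier, uniformly in the threshold --- which is, in substance, what the paper's choice of $f$ and $g$ accomplishes.
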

\begin{exa}
In dimension $d=3$, if the warping function is of the form $\alpha(t)=\exp \left(t^{\beta}\right)$ with $\beta \in (0,1)$, then $H(t)= \beta t^{\beta-1}$ and the last proposition ensures that $\dot{t}_s$ is almost surely transient if and only if $\beta < 2/3$.
\end{exa}

The above qualitative behaviors can be made more explicit with quantitative estimates. In particular, in the transient case, we relate the almost-sure speed of divergence of $\dot{t}_s$ to the growth rate of the torsion function, see Sect. \ref{SEC.ASYMPTEMPTRANS}.

\subsubsection{Asymptotics of the spatial components}
\smallskip
We conclude this section by expliciting  the asymptotic behavior of the spatial derivative $\dot{x}_s$. Since the squared norm $|\dot{x}_s|^2:=h(\dot{x}_s,\dot{x}_s)$ in  $T_{x_s} M$ is related to the temporal process via the pseudo-norm relation (\ref{eqn.pseudorw}), we are here more particularly interested in the normalized derivative  $\dot{x}_s/|\dot{x}_s|$. Once again, we have a dichotomy depending on the growth of the torsion function and the finiteness of the integral $I_+(\alpha)$. Before stating our results, let us sketch a picture of the situation.
\par
\smallskip
If $I_+(\alpha)<+\infty$, Theorem \ref{theo.causal} and the first point of Example \ref{exa.cbound} ensure that the projection $x_s$ converges almost surely to a random point $x_{\ii} \in M$ (see Proposition \ref{pro.horizon} below).  If $T<+\infty$, Theorem \ref{theo.causalderive} below shows that $\dot{x}_s/|\dot{x}_s|$ is always convergent, see figure \ref{fig.transrec} (left) below.
But if $T=+\infty$, as in the case of the temporal process, we show that the recurrence/transience of $\dot{x}_s/|\dot{x}_s|$ is governed by the type of expansion considered. Namely, if rate of decrease of $H$ is fast enough i.e if $\alpha$ is of polynomial growth or of subexponential growth with $H^3 \in \mathbb L^1$, then $\dot{x}_s/|\dot{x}_s|$ converges almost surely to a random point $\Theta_{\ii} \in T_{x_{\ii}}^1 M$, whereas if $H_{\infty}>0$ or if $\alpha$ is of subexponential growth with $H^3 \notin \mathbb L^1$, $\dot{x}_s/|\dot{x}_s|$ asymptotically describes a recurrent time-changed spherical Brownian motion in the limit unitary tangent space $T_{x_{\ii}}^1 M \approx \S^2 $. In the latter case, the projection $x_s$ is thus convergent but the convergence to the limit random point $x_{\infty}$ is very irregular, see Fig. \ref{fig.transrec} (right).

\begin{figure}[ht]
\includegraphics[scale=0.5]{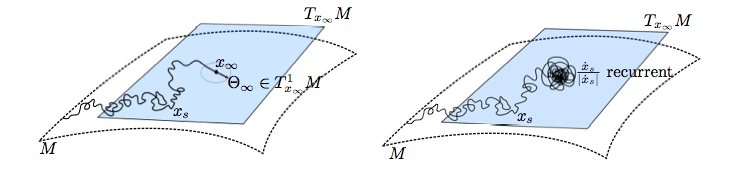}
\caption{Left: typical behavior of the projection $x_s$  in $M$ when $T<+\infty$ or when $T=+\infty$, $I_+(\alpha)<+\infty$ and the expansion is polynomial; Right: typical behavior of the projection $x_s$  in $M$ when $T=+\infty$, $I_+(\alpha)<+\infty$ and the expansion is exponential.\label{fig.transrec}}
\end{figure}

\newpage
\noindent
\begin{thm}\label{theo.causalderive}
Let $\mathcal M:=(0, T) \times_{\alpha} M$ be a RW space-time satisfying the hypotheses of Sect. \ref{sec.RW} and such that $I_+(\a)<+\infty$.
Let $(\xi_s, \dot{\xi}_s)=(t_s, x_s, \dot{t}_s, \dot{x}_s)$ be the relativistic diffusion starting from $(\xi_0, \dot{\xi}_0) \in T^1_+ \MM$. Then almost surely, when $s$ goes to $\ta=\inf \{ s>0, \, t_s= T\}$, the projection $x_s$ converges to a random point $x_{\ii} \in M$ and the normalized derivative $\dot{x}_s/|\dot{x}_s|$ satisfies:  
\begin{enumerate}[$i)$]
\item if $T<\ii$, then $\dot{x}_s/|\dot{x}_s|$ converges to a random point $\Theta_{\ii} \in T_{x_{\ii}}^1 M$;
\item if $T=+\ii$ and the growth rate of $\alpha$ is at most polynomial then $\dot{x}_s/|\dot{x}_s|$ converges to a random point $\Theta_{\ii} \in T_{x_{\ii}}^1 M$;
\item if $T=+\ii$ and the growth rate of $\alpha$ is subexponential with  with $H^3 \in \mathbb L^1$ if $d>4$ or $H^3 \in \mathbb L^{1^-}$ if $d=3$, then $\dot{x}_s/|\dot{x}_s|$ converges to a random point $\Theta_{\ii} \in T_{x_{\ii}}^1 M$;
\item if $T=+\ii$ and $\alpha$ is of exponential growth or is of subexponential growth with $H^3 \notin \mathbb L^1$, then $\dot{x}_s/|\dot{x}_s|$ is recurrent. More precisely, if $M=\mathbb R^{d}$, the process $\dot{x}_s/|\dot{x}_s|$ is a recurrent time-changed spherical Brownian motion. In the case $M=\mathbb H^{d} \subset \mathbb R^{d+1}$ or $M=\mathbb S^{d} \subset \mathbb R^{d+1}$, for all $\e>0$, there exists a proper time $s_{\e}$ that is almost surely finite and a recurrent time-changed spherical Brownian motion $(\Theta_s^{\e}, \, s \geq s_{\e})$ in $T_{x_{\ii}}^1 M \approx \S^{d-1} $ such that: 
$$\sup_{s \geq s_{\e}}\left|\left| \frac{\dot{x}_{s }}{|\dot{x}_{s}|}- \Theta_s^{\e}\right| \right|\leq \e,$$
where $|| \cdot ||$ denotes the Euclidean norm in the ambiant space $\mathbb R^{d+1}$.
\end{enumerate}
\end{thm}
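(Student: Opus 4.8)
The plan is to reduce the whole statement to the behaviour of a single scalar clock attached to the temporal sub-diffusion. The convergence $x_s\to x_\ii\in M$ is, under the standing hypothesis $I_+(\a)<\ii$, the refinement of Theorem~\ref{theo.causal} provided by Proposition~\ref{pro.horizon}, so we concentrate on $\Theta_s:=\dot x_s/|\dot x_s|$. In the spatially flat case $M=\R^d$ one works in the global chart of \eqref{eqn.flj.eucli}: applying It\^o's formula to $\Theta^i_s=\dot x^i_s/|\dot x_s|$ and using the identity $\sum_j(\d_{ij}-\Theta^i_s\Theta^j_s)\dot x^j_s=0$, neither the radial drift $(-2H(t_s)\dot t_s+d\s^2/2)\dot x_s$ nor the radial part $\dot x^i_s\dot x^j_s\,ds$ of $\langle M^{\dot x}\rangle$ contributes to $d\Theta_s$, and using the pseudo-norm relation \eqref{eqn.pseudorw} in the form $\a^2(t_s)|\dot x_s|^2=\dot t_s^2-1$ one gets
\[
d\Theta^i_s=-\frac{(d-1)\s^2}{2(\dot t_s^2-1)}\,\Theta^i_s\,ds+dN^i_s,\qquad d\langle N^i,N^j\rangle_s=\frac{\s^2}{\dot t_s^2-1}\bigl(\d_{ij}-\Theta^i_s\Theta^j_s\bigr)\,ds .
\]
This is exactly $\tfrac{\s^2}{2(\dot t_s^2-1)}\D_{\S^{d-1}}$ acting on the coordinate functions, so $\Theta_s=B_{C_s}$ for a spherical Brownian motion $B$ on $\S^{d-1}$ run with the clock $C_s:=\s^2\int_0^s du/(\dot t_u^2-1)$. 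Hence everything hinges on the limit $C_\ta\in(0,+\ii]$: if $C_\ta<\ii$ almost surely then $\Theta_s\to\Theta_\ii:=B_{C_\ta}$, a random point that lies in $T^1_{x_\ii}M$ because $x_s\to x_\ii$; and if $C_\ta=+\ii$ almost surely then $\Theta_s$ is a recurrent time-changed spherical Brownian motion, $\S^{d-1}$ being compact for $d\ge3$.

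For the curved fibres $M=\H^d$ or $\S^d$, regarded as (pseudo-)spheres of $\R^{d+1}$, the same computation carried out in the ambient space shows that $\Theta_s=\dot x_s/\|\dot x_s\|$ solves the analogous equation, with the bracket supported on the moving tangent space $T_{x_s}M$ and with one extra \emph{finite-variation} drift of the form $\pm(|\dot x_s|_h^2/\|\dot x_s\|)\,x_s\,ds$ coming from the motion of the constraint $\Theta_s\in T_{x_s}M$. Since the spatial curve is timelike,
\[
\int_0^\ta|\dot x_s|_h\,ds=\int_0^\ta\frac{\sqrt{\dot t_s^2-1}}{\a(t_s)}\,ds\le\int_0^\ta\frac{\dot t_s}{\a(t_s)}\,ds=I_+(\a)<+\ii,
\]
so, working past the (a.s.\ finite) time at which $x_s$ enters a small neighbourhood of $x_\ii$ where $\|\cdot\|_h$ and $\|\cdot\|$ are comparable, this extra drift has finite total Euclidean variation. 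In cases $i)$–$iii)$ it therefore does not destroy the convergence of $\Theta_s$; in case $iv)$ one obtains the stated $\e$-approximation by choosing $s_\e$ so large that the total variation of the extra drift after $s_\e$, and $\|x_{s_\e}-x_\ii\|$, are both $<\e$, and then coupling $\Theta$ after $s_\e$ with a spherical Brownian motion $\Theta^\e$ in the \emph{fixed} space $T^1_{x_\ii}M$.

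It remains to decide, in each regime, whether $C_\ta$ is finite. Since $d\ge3$, the value $\dot t=1$ is a non-attainable (entrance) boundary for the temporal diffusion — by comparison with a squared Bessel process of dimension $d$ near $\dot t=1$, or directly from the scale and speed measures — so $\int_0^S du/(\dot t_u^2-1)<\ii$ a.s.\ for every $S<\ta$. Consequently: $i)$ if $T<\ii$ then $\ta<\ii$ and $\dot t_s\to+\ii$ by Theorem~\ref{theo.causalderivetemp}\,$i)$, so near $\ta$ the integrand is bounded and $C_\ta<\ii$; $ii)$ if $\a$ is of at most polynomial growth, $\dot t_s$ is transient and the quantitative growth rate established in Section~\ref{SEC.ASYMPTEMPTRANS} gives $\int^\ta du/(\dot t_u^2-1)<\ii$, hence $C_\ta<\ii$; $iii)$ if $\a$ is of subexponential growth then $H^d\in\L^1$ (as $H\to0$), so $\dot t_s\to+\ii$, and comparing $\dot t_s$ with its ``equilibrium level'' $d\s^2/(2H(t_s))$ gives $C_\ta\asymp\int^T H(t)^3\,dt$, which is finite under the hypothesis on $H^3$; $iv)$ if $\a$ is of exponential growth, $\dot t_s$ is Harris recurrent in $(1,+\ii)$ and its autonomous limit $dv_s=-H_\ii(v_s^2-1)ds+\tfrac{d\s^2}{2}v_s\,ds+\s\,dM_s$ has invariant density $\propto(v^2-1)^{d/2-1}e^{-2H_\ii v/\s^2}$, against which $v\mapsto(v^2-1)^{-1}$ is integrable for $d\ge3$; the ergodic theorem then yields $C_S/S\to\mathrm{const}>0$, so $C_\ta=+\ii$; while if $\a$ is of subexponential growth with $H^3\notin\L^1$, the soft ceiling $\dot t_s\lesssim 1/H(t_s)$ forces $C_\ta\gtrsim\int^T H(t)^3\,dt=+\ii$. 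Combining this with the first two paragraphs proves $i)$–$iv)$.

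The step I expect to be most delicate is case $iii)$ at the borderline: turning the heuristic $C_\ta\asymp\int^T H^3$ into an almost-sure statement requires controlling the fluctuations of $\dot t_s$ about $d\s^2/(2H(t_s))$, and these become relatively larger as $d$ decreases, which is exactly why one must replace $H^3\in\L^1$ by $H^3\in\L^{1^-}$ when $d=3$. These estimates, which also underlie Proposition~\ref{pro.H3ssi}, are carried out in detail in \cite{mathese}; the curved-fibre bookkeeping of the second paragraph is routine but tedious and is likewise deferred there.
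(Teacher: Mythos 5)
Your proposal follows essentially the same route as the paper's proof: convergence of $x_s$ via Proposition \ref{pro.horizon}, identification of $\dot x_s/|\dot x_s|$ as a spherical Brownian motion time-changed by the clock $C_s=\sigma^2\int^s du/(\dot t_u^2-1)$ (with a finite-variation correction and a frame/coupling argument in the curved cases), and reduction of points $i)$--$iv)$ to the almost-sure finiteness or divergence of $C_\tau$. The clock dichotomy you sketch heuristically is exactly the paper's Corollary \ref{cor.clock}, proved there via the auxiliary process $H(t_s)(\dot t_s+\sqrt{\dot t_s^2-1})$, comparison diffusions, ergodic theorems and the $\mathbb{L}^{1^-}$ refinement for $d=3$ (Lemma \ref{lem.encadre}), while your curved-fibre coupling corresponds to the paper's parallel-transported-frame argument with the exact cancellation in $dR_s^2$; the delicate estimates you defer are precisely the ones carried out there.
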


\noindent
The proof of Theorem \ref{theo.causalderive} is given in Sect. \ref{sec.tangentunitaire}. 

\begin{rem}
In the case $I_+(\a)<+\infty$ i.e. when the manifold $\mathcal M$ has finite horizon, it is interesting to note that whatever the nature of the Riemannian manifold $M$, the process $\dot{x}_s/|\dot{x}_s|$ is either convergent or it asymptotically describes a  recurrent time-changed spherical Brownian motion in the limit unitary tangent space. In other words, the normalized spatial derivative does not ``see'' the curvature of the Riemannian manifold $M$, its asymptotic behavior only depends on the torsion function $\a$.
\end{rem}

Finally, we describe the asymptotic behavior of the normalized derivative in the case when $I_+(\alpha)=+\infty$. When the Riemannian manifold $M$ has non positive curvature, and when properly rescaled, the process $\dot{x}_s$ is shown to be convergent. On the contrary, in the spherical case, the process $\dot{x}_s/|\dot{x}_s|$ has a remarkable asymptotic behavior: almost surely, it asymptotically describes a random great circle on the $d-$dimensional Euclidean sphere.

\begin{thm} \label{theo.cercasymp}
Let $\MM=(0, T) \times_{\a} M$ be a RW space-time satisfying the hypotheses of Sect. \ref{sec.RW} and such that $I_+(\a)=+\infty$. Let $(\xi_s, \dot{\xi}_s)=(t_s, x_s, \dot{t}_s, \dot{x}_s)$ be the relativistic diffusion starting from $(\xi_0, \dot{\xi}_0) \in T^1_+ \MM$.  Then the process $x_s$ is transient and its normalized derivative $\dot{x}_s/|\dot{x}_s|$ satisfies:  
\begin{enumerate}[$i)$]
\item if $M=\R^{d}$, then $\dot{x}_s/|\dot{x}_s|$ converges to a random point $\Theta_{\ii} \in \S^{d-1}$ ; 
\item if $M=\H^{d} \subset \R^{1,{d}}$, then $|x^0_s|^{-1} \times \dot{x}_s/|\dot{x}_s|$ converges to a random point $(1,\Theta_{\ii})$ ;
\item if $M=\S^{d} \subset \R^{{d+1}}$, then both $x_s$ and $\dot{x}_s/|\dot{x}_s|$ asymptotically describe a random great circle in the Euclidean sphere $\S^{d}$.
\end{enumerate}
\end{thm}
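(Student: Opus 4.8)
\noindent\emph{Proof plan.}
The plan is to reduce the whole statement to the behaviour of the angular part $\Theta_s:=\dot x_s/|\dot x_s|$ of the spatial derivative, seen as a process on the unit tangent bundle $T^1 M$ of the fiber. Writing $\dot x_s=|\dot x_s|\,\Theta_s$ in the spatial equations of the relativistic diffusion (the constant-curvature analogue of (\ref{eqn.flj.eucli})), I would first note that the drift of $\dot x_s$ is radial, hence does not act on $\Theta_s$, and that only the tangential component of the martingale part contributes, with quadratic covariation equal to $\sigma^2\a^{-2}(t_s)$ times the orthogonal projection onto $T^1_{x_s}M$. A skew-product argument then identifies $\Theta_s$, modulo the geodesic flow of $M$, with a spherical Brownian motion run along the clock
$$A_s:=\sigma^2\int_0^s\frac{du}{\a^2(t_u)\,|\dot x_u|^2}=\sigma^2\int_0^s\frac{du}{\dot t_u^2-1},$$
the last equality coming from the pseudo-norm relation (\ref{eqn.pseudorw}). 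Everything then hinges on whether $A_\ii$ is finite.

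The key step, which I expect to be the main obstacle, is to prove that $I_+(\a)=+\ii$ forces $A_\ii<+\ii$ almost surely. First, $I_+(\a)=+\ii$ excludes the exponential regime: if $T=+\ii$ and $H_\ii>0$ then $H$, being nonincreasing, stays above $H_\ii$, so $\a(t)\geq\a(t_0)e^{H_\ii(t-t_0)}$ and $I_+(\a)<+\ii$; and a short computation using the monotonicity of $\a$ shows that subexponential growth would force $\a$ to grow faster than any power of $t$, again contradicting $I_+(\a)=+\ii$, so by Definition \ref{defi.2} the growth is at most polynomial. By Theorem \ref{theo.causalderivetemp} (cases $i)$ and $ii)$) the temporal derivative $\dot t_s$ is then almost surely transient, $\dot t_s\to+\ii$ as $s\to\ta$. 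It remains to quantify this divergence: the estimates of Section \ref{SEC.ASYMPTEMPTRANS} give $\dot t_s$ growing exponentially fast in $s$ when $T=+\ii$, resp. at a controlled rate as $s\to\ta<+\ii$ in the Big-Crunch case, so that $u\mapsto(\dot t_u^2-1)^{-1}$ is integrable near $\ta$; and the only other possible source of divergence of $A_s$, an excursion of $\dot t_s$ near the value $1$, is controlled by comparing $\dot t_s-1$ near $1$ with a squared Bessel process of dimension $d$, which (since $d\geq 3$) stays positive and for which $u\mapsto(\dot t_u-1)^{-1}$ is almost surely integrable over finite time intervals. Altogether $A_\ii<+\ii$, hence $\Theta_s$ converges almost surely.

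Granting $A_\ii<+\ii$, I would read off the three statements from the geometry of the geodesic flow on the space form $M$. For $M=\R^d$, geodesics keep a fixed direction, so $\Theta_s\to\Theta_\ii\in\S^{d-1}$; and $x_s$ is transient since, once $\langle\Theta_s,\Theta_\ii\rangle\geq\frac12$ and $\dot t_s\geq 2$, the increments of $\langle x_s,\Theta_\ii\rangle$ are at least $c_0\,\dot t_s/\a(t_s)$, whose integral up to $\ta$ equals, up to a finite constant, $I_+(\a)=+\ii$. For $M=\H^d\subset\R^{1,d}$, writing $x_s=(\cosh\rho_s,\sinh\rho_s\,\omega_s)$ with $\rho_s\geq 0$, $\omega_s\in\S^{d-1}$, an analogous finite-clock analysis — now with the geodesic flow of $\H^d$, all of whose orbits run to the boundary — gives $\omega_s\to\omega_\ii$ and makes the motion asymptotically radial, so $\rho_s\to+\ii$ (the transience statement); expanding $\dot x_s$ in the moving frame $(\partial_\rho,\partial_\omega)$ and using $\tanh\rho_s\to 1$ together with $|\dot x_s|^2=\dot\rho_s^2+\sinh^2\rho_s|\dot\omega_s|^2\sim\dot\rho_s^2$ then yields $|x^0_s|^{-1}\dot x_s/|\dot x_s|\to(1,\omega_\ii)$, i.e. $\Theta_\ii=\omega_\ii$. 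For $M=\S^d\subset\R^{d+1}$, the datum conserved along a great circle is the oriented $2$-plane $L_s=\mathrm{span}(x_s,\dot x_s/|\dot x_s|)$; the same finite-clock perturbation makes $L_s\to L_\ii$ for a random $2$-plane $L_\ii$, so that $x_s$ and $\dot x_s/|\dot x_s|$ both remain within $o(1)$ of the random great circle $L_\ii\cap\S^d$ as $s\to\ta$ while the phase of $x_s$ keeps winding — which is precisely the assertion that both asymptotically describe that great circle, and in particular $x_s$ does not converge.

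I expect the only genuine difficulty to be concentrated in the claim $A_\ii<+\ii$, i.e. in the quantitative control of the speed of divergence of $\dot t_s$ in the transient regimes together with its behaviour near $1$ via the dimension-$d$ squared Bessel comparison. Once this is in hand, the rest consists of standard skew-product and moving-frame computations on simply connected space forms, whose details — in particular in the hyperbolic and spherical cases — I would carry over from \cite{mathese}.
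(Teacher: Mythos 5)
Your proposal is correct and follows essentially the same route as the paper: the key step is the same reduction — $I_+(\a)=+\infty$ (together with the Big-Crunch case $T<+\infty$) forces the expansion to be at most polynomial, so the angular clock $\sigma^2\int ds/(\dot t_s^2-1)$ converges (Corollary \ref{cor.clock}, via the quantitative transience of $\dot t_s$), after which the normalized derivative freezes while the geodesic clock $\int a_s\,\a^{-2}(t_s)\,ds\simeq\int^{T} dt/\a(t)=I_+(\a)$ diverges, yielding transience, the limits in the flat and hyperbolic cases, and the winding great circle on the sphere. Your bookkeeping differs only cosmetically from the paper's (tracking the $2$-plane $\mathrm{span}(x_s,\dot x_s/|\dot x_s|)$ instead of solving the perturbed oscillator explicitly via $z_s=(z_0+iI_\infty)e^{-iA_s}+o(1)$ in the spherical case, and a moving-frame radiality argument instead of the polar quantities $c_s/a_s$, $\rho_s/r_s$ of Proposition \ref{pro.hypercausal} in the hyperbolic case), and it rests on the same estimates.
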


\begin{figure}[ht]
\hspace{4cm}\includegraphics[scale=0.6]{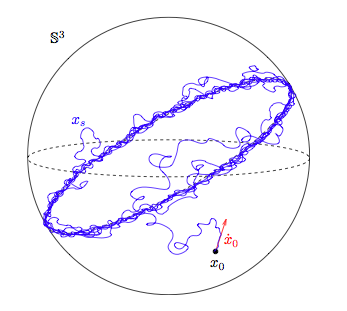}
\caption{Typical behavior of $x_s$ and $\dot{x}_s/|\dot{x}_s|$ in $\mathbb S^3 \subset \mathbb R^4$  in the case $I_+(\alpha)=+\infty$.}
\end{figure}

The proof of Theorem \ref{theo.cercasymp} is given in Sect. \ref{sec.cercasymp}.

\section{Proofs of the results}\label{sec.proofs}

We now give the proofs of the results stated above. The study of the long-time behavior of the temporal diffusion requires a certain amount of work, see Sect. \ref{sec.uniexi}--\ref{sec.temp}, particularly due the fact that the dynamics of $\dot{t}_s$ is really inhomogeneous in the sense that it depends drastically on $t_s$ via the Hubble function $H$. In Sect. \ref{sec.spatial}, we then give the proofs of the results concerning the spatial components of the diffusion: roughly speaking, $(x_s, \dot{x}_s)$ can be seen as an inhomogeneous diffusion on $T^1 M$, parametrized by a clock which depends only on the temporal process. The situation here is very similar to the one of a Brownian motion on a rotationally invariant Riemannian manifold seen in polar coordinates, where the angle is spherical Brownian motion parametrized by an additive functional of the radial process. In our Lorentzian setting, the two-dimensional temporal process plays the role  of the radial process and the spatial process plays the role of the angular component of the Riemannian case.

\subsection{Existence, uniqueness and lifetime of the temporal process} \label{sec.uniexi}
Let us first show existence, uniqueness of the temporal process and explicit its lifetime. 

 From the proof of Proposition \ref{pro.exiuni}, the temporal process is solution to the following system of stochastic differential equations: 
\begin{equation}\label{eqn.ttpoint}
\left \lbrace \begin{array}{ll}
\ds{ d t_s = \dot{t}_s ds, } \\ 
\ds{d \dot{t}_s= -H(t_s) \times \left( \dot{t}_s^2-1 \right) ds  + \frac{d \s^2}{2} \dot{t}_s ds+ \s d M^{\dot{t}}_s,} \end{array} \right. \;\; \hbox{with} \;\;  d \langle M^{\dot{t}}, \, M^{\dot{t}} \rangle_s =  \left( \dot{t}_s^2-1 \right) ds.
\end{equation}

\begin{lemma}\label{LEM.UNIEXITEMP}
For any starting point $(t_0,\dot{t}_0) \in (0,T) \times [1, +\infty)$, Equation (\ref{eqn.ttpoint}) admits a unique strong solution $(t_s,\dot{t}_s)$, which is well defined up to the explosion time $\ta:=\inf \{s>0, t_s = T\}$, and such that $\dot{t}_s >1$ almost surely for all $0<s < \ta$. 
\end{lemma}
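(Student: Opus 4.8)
The equation for $\dot{t}_s$ in \eqref{eqn.ttpoint} has diffusion coefficient $\sigma\sqrt{\dot{t}_s^2-1}$, which is only $1/2$-H\"older near the boundary value $\dot{t}=1$, so the classical Lipschitz theory does not apply directly. The plan is to remove this degeneracy by a change of variable. Since the paths are future-directed and parametrized by arc length, \eqref{eqn.pseudorw} forces $\dot{t}_s\geq 1$, and the natural substitution is $\dot{t}_s=\cosh(r_s)$ with $r_s\geq 0$, or equivalently $r_s=\acosh(\dot{t}_s)$; then $\dot{t}_s^2-1=\sinh^2(r_s)$ and $\sqrt{\dot{t}_s^2-1}=\sinh(r_s)$. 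Applying It\^o's formula to $r_s=\acosh(\dot{t}_s)$ (valid on $\{\dot{t}_s>1\}$, where $\acosh$ is smooth) turns the martingale part into $\sigma\,dB_s$ for a standard Brownian motion $B$, because $\langle dr\rangle_s = (\sinh^2 r_s)^{-1}\cdot\sigma^2\sinh^2(r_s)\,ds=\sigma^2\,ds$; the miraculous cancellation of $\sinh^2$ is the point of the substitution. The resulting drift of $r_s$ is a smooth function of $(t_s,r_s)$ on $(0,T)\times(0,+\infty)$, and a short Taylor expansion near $r=0$ shows it behaves like $(\tfrac{d\sigma^2}{2}-H(t_s))\,r_s^{-1}$ plus bounded terms, so $0$ is an entrance-type boundary that is never reached (a Bessel-like comparison near $r=0$). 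Thus the pair $(t_s,r_s)$ solves an SDE with locally Lipschitz coefficients on the open region $(0,T)\times(0,+\infty)$, where standard existence–uniqueness results (e.g. Theorem 2.3 p.~173 of \cite{ikeda}, Theorem 1.1.9 of \cite{hsu}) give a unique strong solution up to the exit time of this region.

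Next I would argue that the only way the solution $(t_s,r_s)$ can exit $(0,T)\times(0,+\infty)$ is through $t_s\to T$. The boundary $r=0$ (i.e.\ $\dot{t}=1$) is inaccessible by the comparison argument above: near $r=0$ the radial part dominates and behaves like a Bessel process of dimension $d\geq 3$ (in fact dimension $\sim 2\tfrac{d\sigma^2}{2}/\sigma^2+1=d+1$ when the $H$ term is negligible, certainly $\geq 3$), which never hits $0$; more carefully one only needs that the coefficient of $r^{-1}$ in the drift is bounded below by a positive constant on compact $t$-intervals, which holds since $H$ is continuous on $(0,T)$. The boundary $t=0$ cannot be reached in finite proper time going backward from $t_0>0$ since $\dot{t}_s=\cosh r_s\geq 1>0$ forces $t_s$ to be strictly increasing, so $t_s\geq t_0$ for all $s\geq 0$. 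Finally $r_s$ cannot blow up to $+\infty$ while $t_s$ stays in a compact subinterval $[t_0,T']\subset(0,T)$: there $H$ is bounded, $-H(t_s)(\dot{t}_s^2-1)\leq |H|_\infty \dot{t}_s^2$, and a Gronwall / localization estimate on $\dot{t}_s$ (using that the martingale bracket is controlled by $\dot{t}_s^2$) prevents explosion before $t_s$ escapes every such $[t_0,T']$. Hence the lifetime equals $\tau=\inf\{s>0:\ t_s=T\}$, and translating back via $\dot{t}_s=\cosh r_s$ gives the unique strong solution of \eqref{eqn.ttpoint} with $\dot{t}_s>1$ for all $0<s<\tau$ (strict inequality precisely because $r_s>0$ for all such $s$, $0$ being inaccessible).

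The main obstacle is the behavior at the boundary $\dot{t}=1$: one must simultaneously (i) justify that the change of variables $r=\acosh(\dot{t})$ is legitimate — i.e.\ that the process never touches $\{\dot t=1\}$ so It\^o's formula applies globally in time — and (ii) extract from the transformed drift the correct $r^{-1}$ singular term and show it is repulsive, which is what makes $\{r=0\}$ an entrance boundary. This is a Feller-test / Bessel-comparison argument: I would write the scale function and speed measure of the one-dimensional diffusion obtained by freezing $t_s$ (legitimate for a local comparison since $t_s$ varies slowly and continuously), and check that $0$ is non-attainable while $+\infty$ is attainable only as $t\to T$. Everything else — smoothness of coefficients away from the boundaries, patching local solutions, the standard non-explosion estimate — is routine once this degeneracy is handled, and indeed the subsequent Lemmas \ref{LEM.LIFETIME} (and the finiteness/infiniteness dichotomy for $\tau$ according to whether $T<\infty$ or $T=+\infty$) build on exactly this reduction.
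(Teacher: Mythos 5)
Your route is genuinely different from the paper's (which substitutes $a_s^2=\alpha^2(t_s)(\dot t_s^2-1)$, obtains coefficients defined up to the boundary $a^2=0$, and then shows through an explicit exponential representation that $a_s$ never vanishes before $\ta$), and the rapidity substitution $\dot t_s=\cosh(r_s)$ is in principle a workable way to treat the degeneracy at $\dot t=1$; but several of your concrete steps are wrong or missing. First, the transformed drift is not what you claim: It\^o's formula gives
\[
dr_s=\Bigl[\tfrac{(d-1)\sigma^2}{2}\coth(r_s)-H(t_s)\sinh(r_s)\Bigr]ds+\sigma\,dB_s,
\]
so the singular part at $r=0$ is $\tfrac{(d-1)\sigma^2}{2}\,r^{-1}$: you dropped the It\^o correction $-\tfrac{\sigma^2}{2}\coth(r_s)$, and the $H$-term vanishes at $r=0$ rather than contributing to the $r^{-1}$ singularity. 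The inaccessibility conclusion survives (after scaling, $r_s$ is locally comparable to a Bessel process of dimension $d\ge 3$), but your stated criterion --- that a positive lower bound on the coefficient of $r^{-1}$ suffices --- is false: Bessel processes of dimension in $(1,2)$ have a positive $r^{-1}$-drift and still hit $0$; one needs the coefficient to be at least $\sigma^2/2$. Moreover the lemma allows $\dot t_0=1$, i.e. $r_0=0$: your construction lives on the open region where the coefficients are locally Lipschitz and neither produces nor proves unique a solution started at this singular boundary point, so this case needs a separate argument (the paper gets it essentially for free because in the variable $a^2$ the coefficients extend to $a^2=0$, and it then checks that the solution leaves $\{\dot t=1\}$ immediately).

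The more serious gap is the identification of the lifetime with $\ta=\inf\{s>0,\ t_s=T\}$. Your non-explosion step (``Gronwall, since $H$ is bounded on $[t_0,T']$'') only works when the drift of $\dot t_s$ has linear growth, i.e. when $H\ge 0$; in the Big-Crunch case $H<0$ near $T$, and on a compact $t$-interval the drift is of size $|H|\,\dot t_s^2$ (equivalently, the drift of $r_s$ contains $|H(t_s)|\sinh(r_s)$, exponential in $r_s$). A quadratic drift is exactly the situation where Gronwall-type estimates fail and finite-time explosion cannot be excluded ($dy=Cy^2\,ds$ blows up), so ruling out explosion of $\dot t_s$ strictly before $t_s$ reaches $T$ requires exploiting the coupling $dt_s=\dot t_s\,ds$. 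This is precisely what the paper's change of variable achieves: in $a_s^2=\alpha^2(t_s)(\dot t_s^2-1)$ the $H$-terms cancel identically, the drift $(d+1)\sigma^2a_s^2+d\sigma^2\alpha^2(t_s)$ and the diffusion coefficient have linear growth in $a_s^2$ as long as $t_s\le T-1/n$, so a standard criterion forbids explosion before $\ta$, and positivity of $a_s$ up to $\ta$ is then read off an explicit exponential formula. To complete your proof you must either exhibit a Lyapunov function playing this role (essentially $\alpha(t_s)\sinh(r_s)$, which is exactly the paper's $a_s$) or import the paper's computation at this step.
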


\begin{proof}

The coefficients in Equation (\ref{eqn.ttpoint}) being continuous functions of  $(t, \dot{t})$, by classical existence results (e.g. Theorem (2.3) p. 173 of \cite{ikeda}), it admits a strong solution up to explosion. Note that the diffusion coefficient in Equation (\ref{eqn.ttpoint}) is only $1/2-$H\"older in the neighborhood of $\dot{t}=1$. Nevertheless, we can consider the change of variable $(t_s, \dot{t}_s) \to (t_s, a_s^2)$ where 
\begin{equation}\label{def.a}
a_s:=\a(t_s) \sqrt{\dot{t}_s^2-1}.
\end{equation}
By It\^o's formula, we have 
\begin{equation} \left \lbrace \begin{array}{l}
\ds{ d t_s = \sqrt{1 + \frac{a^2_s}{\a^2(t_s)}} \, ds,} \\ \\
\ds{d a^2_s = (d+1)\s^2 a^2_s ds +d \s^2  \a^2(t_s) ds + d M^{a^2}_s,}   
\end{array}\right.\label{eqn.tadeux}\end{equation}
where $d \langle M^{a^2}\rangle_s = 4 \s^2 a^2_s \left( a^2_s + \a^2(t_s)\right) ds$. 
The coefficients in Equation (\ref{eqn.tadeux}) are now locally Lipschitz functions of $(t,a^2)$ in $(0, T) \times [0, +\ii[$, hence classical theorems (e.g. Theorem 1.1.9 of \cite{hsu}) ensure existence and unicity up to the explosion time $\ta \wedge \ta' \wedge \tau''$, where
\[
 \ta:= \inf \{ s>0, \, t_s = T\}, \quad \ta':= \inf \{ s>0, \, a_s = +\ii\}, \quad \hbox{and}  \quad \tau'':=\inf \{s >0, \, a_s=0 \}.
 \]
In fact, we have $\ta \leq \ta' \wedge \tau''$ almost surely. Indeed,  fix an integer $n_0$ such that $t_0 \leq T-1/n_0$ and consider the random times $\ta_n:= \inf \{s>0, \, t_s \geq n \wedge (T-1/n)\}$ for all  $n \geq n_0$. The coefficients in Equation (\ref{eqn.tadeux}) have linear growth in $a^2_s$ on $[0, \ta_n \wedge \tau''[$. Therefore, Proposition 1.1.11 of \cite{hsu} ensures that, almost surely, $a_s^2$ does not explode before $\ta_n \wedge \tau''$. In other words, we have $\ta_n \wedge \tau'' \leq \ta' \; a.s.$ for all $n \geq n_0$. Letting $n$ go to infinity, we get $\ta \wedge \tau'' \leq \ta' $ almost surely.
We are left to show that $\tau'' \geq \tau$ i.e. $a_s>0$ for $0< s< \ta$ or equivalently $\dot{t}_s >1$. If $\dot{t}_0=1$, one easily checks that $\dot{t}_s>1$ for arbitrary small times $s$, thus without loss of generality, we can suppose that $\dot{t}_0>1$ i.e. $a_0>0$. In that case, the time $\ta''=\inf \{s >0, \: \dot{t}_s=1 \}$ is almost surely positive. Moreover, there exists two linear independant Brownian motions $B$ and $B'$ such that
$$ d a_s = \frac{d \s^2}{2} a_s ds + \frac{(d-1)}{2} \s^2\frac{\a^2(t_s)}{a_s} ds + \s  a_s d B_s +  \s \a(t_s) d B_s'.$$
Applying It\^o's formula, we have for $0 \leq s < \tau \wedge \ta''$: 
$$a_s = a_0 \exp \left( \frac{(d-1)}{2} \s^2s +\s B_s  \right) \exp \left( \frac{(d-2)}{2} \s^2\int_0^s \frac{\a^2(t_u)}{a^2_u}du + B'' \left(\s^2 \int_0^s \frac{\a^2(t_u)}{a^2_u}du  \right)   \right).$$
The first exponential can not go to zero in finite time. Moreover, the last exponential is either positive or goes to plus infinity depending on the finiteness of the integral  $\int_0^s \frac{\a^2(t_u)}{a^2_u}du$, in particular it can not go to zero. Thus, we deduce that $\tau'' \geq \tau$ almost surely, hence the result.  
\end{proof}

\begin{lemma}\label{LEM.LIFETIME}
The explosion time $\ta:=\inf \{s>0, t_s = T\}$ of the temporal process $(t_s, \dot{t}_s)_{s \geq 0}$ is almost surely infinite in the case $T=+\infty$ whereas it is almost surely finite in the case $T<+\infty$.
\end{lemma}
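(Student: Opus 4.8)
The plan is to treat the two cases (infinite expansion / Big-Crunch) of Hypothesis \ref{hypo.1} separately: the finite-horizon case is essentially immediate, while the infinite-expansion case reduces to a short non-explosion estimate for $\dot t_s$ that crucially uses $H\ge 0$.

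\emph{The case $T<+\infty$.} By Lemma \ref{LEM.UNIEXITEMP} the temporal process is defined on $[0,\ta)$ with $\dot t_s>1$ there, so $t_s=t_0+\int_0^s\dot t_u\,du>t_0+s$ for all $0\le s<\ta$. Hence $t_s$ must attain the value $T$ no later than time $T-t_0$, i.e.\ $\ta\le T-t_0<+\infty$ almost surely. Note that only $\dot t_s>1$ is used here; the detailed Big-Crunch behaviour of $\a$ near $T$ plays no role in this direction.

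\emph{The case $T=+\infty$.} Here $H\ge0$ on $(0,+\infty)$. Since $\ta=\inf\{s:t_s=+\infty\}$ and $t_s=t_0+\int_0^s\dot t_u\,du$, it suffices to prove that $\dot t_s$ cannot blow up in finite time: then $t_s$ stays finite on every bounded interval and $\ta=+\infty$. For this I would read off from \eqref{eqn.ttpoint}, with $c:=d\s^2/2$, the It\^o differential
\[
d\bigl(e^{-cs}\dot t_s\bigr)=-\,e^{-cs}H(t_s)\bigl(\dot t_s^2-1\bigr)\,ds+\s\,e^{-cs}\,dM^{\dot t}_s ,
\]
whose drift is $\le0$ precisely because $H(t_s)\ge0$ and $\dot t_s\ge1$; thus $\bigl(e^{-cs}\dot t_s\bigr)_{0\le s<\ta}$ is a non-negative local supermartingale. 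Localising at $\ta_n:=\inf\{s:\dot t_s\ge n\}$ and optionally stopping at a deterministic time $m$ gives $\mathbb E\bigl[e^{-c(m\wedge\ta_n)}\dot t_{m\wedge\ta_n}\bigr]\le\dot t_0$; letting $m\to+\infty$ and then $n\to+\infty$ (a routine Fatou / dominated-convergence argument, using that $\ta_n<\ta$ and $\ta_n\uparrow\ta$ on $\{\ta<+\infty\}$) forces $\mathbb P(\ta<+\infty)=0$, which is the required non-explosion.

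Equivalently, and in the spirit of the substitutions used elsewhere in the paper, one may set $W_s:=\acosh(\dot t_s)$, which is positive on $[0,\ta)$ by Lemma \ref{LEM.UNIEXITEMP}; writing $\s\,dM^{\dot t}_s=\s\sqrt{\dot t_s^2-1}\,dB_s$ for a standard Brownian motion $B$, It\^o's formula gives
\[
dW_s=\Bigl[\tfrac{(d-1)\s^2}{2}\coth W_s-H(t_s)\sinh W_s\Bigr]ds+\s\,dB_s\ \le\ \tfrac{(d-1)\s^2}{2}\coth W_s\,ds+\s\,dB_s ,
\]
and since $\coth$ is bounded on $[1,+\infty)$ while $W_s$ is trivially bounded whenever it lies below $1$, the process $W_s$ is dominated on finite time intervals by a drifted Brownian motion (equivalently, a hyperbolic Bessel process), hence cannot reach $+\infty$ in finite time; neither, then, can $\dot t_s=\cosh W_s$. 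The one thing that really matters is the sign $H\ge0$, which makes the $H$-dependent drift pull $\dot t_s$ (resp.\ $W_s$) downward; in the Big-Crunch regime this drift is on the contrary strongly repelling and $\dot t_s$ does diverge, but exactly as $t_s\to T$, so that $\ta$ remains finite. I expect the only genuine effort to be the routine localisation turning ``local supermartingale / stochastic domination'' into an honest statement about $\ta$.
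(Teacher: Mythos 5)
Your proof is correct, and its first half is exactly the paper's: from $\dot t_s\ge 1$ one gets $t_s\ge t_0+s$ on $[0,\tau)$, hence $\tau\le T-t_0$ when $T<+\infty$. For $T=+\infty$ the paper takes a different route: it invokes the comparison Lemma \ref{LEM.COMPARPROJ} (using only $H\ge 0$) to dominate $\dot t_s$ by the solution of the same SDE with $H\equiv 0$, whose explicit form $\dot t_0\exp(\tfrac{d-1}{2}\sigma^2 s+\sigma B_s+u_s)$ in Lemma \ref{LEM.CASSTAT} shows it is defined for all time, so $\dot t_s$, and hence $t_s$, cannot explode in finite time. Your main argument replaces this by a self-contained Lyapunov/supermartingale estimate: with $c=d\sigma^2/2$ the drift of $e^{-cs}\dot t_s$ is $-e^{-cs}H(t_s)(\dot t_s^2-1)\le 0$, so the localized expectation bound gives $\mathbb P(\tau_n\le m)\le \dot t_0\,e^{cm}/n$ (your order of limits with Fatou also works), and since on $\{\tau<+\infty\}$ one has $\int_0^\tau\dot t_u\,du=+\infty$, hence $\tau_n<\tau$ for every $n$, this forces $\mathbb P(\tau\le m)=0$ for each $m$ — that identification of explosion of $t_s$ with blow-up of $\dot t_s$ is the one step worth stating explicitly, as you do. What this buys is independence from Lemmas \ref{LEM.COMPARPROJ} and \ref{LEM.CASSTAT}, which in the paper are only proved after the present lemma, at essentially no extra cost; the paper's comparison route, on the other hand, reuses machinery needed anyway for the later recurrence/transience analysis. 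Your second sketch, via $W_s=\acosh \dot t_s$ and domination by the process with drift $\tfrac{(d-1)\sigma^2}{2}\coth W$, is really the paper's argument in disguise (the $H\equiv 0$ diffusion is precisely this hyperbolic Bessel-type process); it is fine provided you add the small remark that although the drift blows up near $W=0$, explosion can only occur at $+\infty$ and the drift is bounded by $\tfrac{(d-1)\sigma^2}{2}\coth 1$ on $\{W\ge 1\}$, so the stochastic domination on finite time intervals is legitimate.
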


\begin{proof}
From Equation (\ref{eqn.pseudorw}), we have $\dot{t}_s \geq 1$ for all $0\leq s < \tau$ almost surely. In particular, $t_s \geq t_0 +s$  for all $0\leq s < \tau$ so that $\tau$ is necessarily almost surely finite when $T<+\infty$. In the case $T=+\infty$, classical comparison results show that the solution $\dot{t}_s$ of Equation (\ref{eqn.ttpoint}) is bounded above by its analogue in the case where $H \equiv 0$ (see Lemma \ref{LEM.COMPARPROJ} below). The lifetime of this process is almost surely infinite (see Lemma \ref{LEM.CASSTAT} below), hence the result.
\end{proof}

\subsection{Asymptotic behavior of the temporal sub-diffusion} \label{sec.temp}
In this section, we determine the almost sure asymptotic behavior of the temporal process $(t_s, \dot{t}_s)$ i.e. we give the proof of Theorem \ref{theo.causalderivetemp} and Proposition \ref{pro.H3ssi}. From Equation (\ref{eqn.ttpoint}),
there exists a real standard Brownian motion $B$ such that $(t_s, \dot{t}_s)$ is solution to 
\begin{equation}\label{eqn.temp} \left \lbrace \begin{array}{ll}
\ds{d t_s = \dot{t}_s ds, } \\
\ds{d \dot{t}_s= -H(t_s) \times \left( \dot{t}_s^2-1 \right)ds  + \frac{d \s^2}{2} \dot{t}_s ds+ \s \sqrt{\dot{t}_s^2-1} \,dB_s}. \end{array} \right.
\end{equation}

\subsubsection{Transience in the case $T<+\infty$}  \label{sec.tfini}
\smallskip
We establish here the first point of Theorem \ref{theo.causalderivetemp}, i.e. the almost sure transience of $\dot{t}_s$ in the case where $T<+\infty$.

\begin{prop}\label{PRO.CONVA} Suppose that $T<+\infty$, fix $(t_0, \dot{t}_0) \in (0,T) \times [1, +\infty)$ let $(t_s, \dot{t}_s)$ be the solution of Equation (\ref{eqn.temp}) starting from $(t_0, \dot{t}_0)$. Then, when $s$ goes to $\tau:=\inf \{ s>0, \, t_s= T\}$, $\dot{t}_s$ tends to infinity almost surely. More precisely, the process $a_s$ defined by Equation (\ref{def.a}) converges almost surely to a random variable $a_{\ii}$ which is positive and finite almost surely. 
\end{prop}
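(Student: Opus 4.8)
The key is the second equation of the system \eqref{eqn.tadeux},
\[
d a^2_s = (d+1)\s^2 a^2_s\, ds + d\s^2 \a^2(t_s)\, ds + d M^{a^2}_s,\qquad d\langle M^{a^2}\rangle_s = 4\s^2 a^2_s\bigl(a^2_s+\a^2(t_s)\bigr)\, ds,
\]
together with the first equation $dt_s = \sqrt{1+a^2_s/\a^2(t_s)}\, ds$. Since $T<+\infty$, Lemma \ref{LEM.LIFETIME} gives $\ta<+\infty$ a.s., and from $\dot t_s\ge 1$ we have $t_s\to T$ as $s\to\ta$. The plan is to remove the ``bad'' linear drift by an exponential change of clock, then show the remaining process is a time-changed Brownian motion run for only a finite amount of time, hence converges; finally one rules out the limit being $0$ or $+\infty$.

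\textbf{Step 1: neutralize the linear term.} Set $b_s := e^{-(d+1)\s^2 s} a^2_s$. By It\^o's formula, $db_s = d\s^2 e^{-(d+1)\s^2 s}\a^2(t_s)\, ds + e^{-(d+1)\s^2 s}\, dM^{a^2}_s$, which has \emph{no} linear-in-$b_s$ drift. Since $\ta<+\infty$, the factor $e^{-(d+1)\s^2 s}$ stays bounded between two positive constants on $[0,\ta)$, so $b_s$ and $a^2_s$ are comparable and it suffices to prove $b_s$ converges to a finite strictly positive limit.

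\textbf{Step 2: the drift part of $b_s$ is of bounded variation with finite total variation.} The finite-variation part of $b_s$ is $\int_0^s d\s^2 e^{-(d+1)\s^2 u}\a^2(t_u)\, du$. On $[0,\ta)$ one has $\a(t_u)\to 0$ as $u\to\ta$ (Big-Crunch hypothesis, since $t_u\to T$), and more crucially $\a^2(t_u)\, du = \a^2(t_u)\, dt_u/\dot t_u \le \a^2(t_u)\, dt_u$ after the substitution $u\mapsto t_u$; but $\int_0^T \a^2(t)\, dt<+\infty$ because $\a$ is continuous on $[0,T]$ with $T<+\infty$. Hence the drift part converges absolutely as $s\to\ta$.

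\textbf{Step 3: the martingale part converges.} Its quadratic variation at the explosion time is $\int_0^\ta 4\s^2 e^{-2(d+1)\s^2 u} a^2_u(a^2_u+\a^2(t_u))\, du$. To control this one needs an a priori bound on $a_u$ near $\ta$. Here I would use comparison: dropping the positive term $d\s^2\a^2(t_s)\, ds\ge 0$ and the martingale is awkward, so instead integrate the SDE and use that $\sup_{s<\ta} a^2_s<+\infty$ a.s. — this follows because the drift of $b_s$ has finite total variation (Step 2) and a continuous local martingale on a stochastic interval either converges or oscillates between $\pm\infty$; combined with $b_s\ge 0$, oscillation to $+\infty$ is excluded unless $\limsup b_s=+\infty$, and one shows via a Dambis--Dubins--Schwarz representation $b_s = b_0 + (\text{BV part}) + \gamma_{\langle M^b\rangle_s}$ with a Brownian motion $\gamma$ that, because the BV part is bounded, $\langle M^b\rangle_\ta<+\infty$ iff $b$ is bounded near $\ta$; a localization/Gronwall argument on $\E[\sup_{s\le\ta_n} b_s]$ closes the loop. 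Granting $\sup_{s<\ta}a^2_s<+\infty$, the quadratic variation integral is bounded (integrand bounded, interval $[0,\ta)$ finite), so the martingale part converges a.s. Therefore $b_s$, and hence $a_s$, converges a.s. to some $a_\infty\in[0,+\infty)$.

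\textbf{Step 4: the limit is strictly positive.} This is exactly the computation already displayed in the proof of Lemma \ref{LEM.UNIEXITEMP}: writing
\[
a_s = a_0\,\exp\!\Bigl(\tfrac{(d-1)}{2}\s^2 s + \s B_s\Bigr)\exp\!\Bigl(\tfrac{(d-2)}{2}\s^2\!\int_0^s\!\tfrac{\a^2(t_u)}{a^2_u}du + B''\bigl(\s^2\!\int_0^s\!\tfrac{\a^2(t_u)}{a^2_u}du\bigr)\Bigr),
\]
with $s$ bounded by $\ta<+\infty$, the first factor is bounded away from $0$; the second factor is either bounded below by a positive constant (if $\int_0^\ta \a^2(t_u)/a^2_u\, du<+\infty$) or tends to $+\infty$ — in either case it cannot tend to $0$. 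Hence $a_\infty>0$ a.s. Since $\a(t_s)\to 0$ and $a_s\to a_\infty\in(0,\infty)$, the pseudo-norm/definition \eqref{def.a} gives $\dot t_s = \sqrt{1 + a^2_s/\a^2(t_s)}\to +\infty$ a.s., which is the transience claim.

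\textbf{Main obstacle.} The delicate point is Step 3: establishing the a priori bound $\sup_{s<\ta}a^2_s<+\infty$ a.s. One cannot simply invoke a global Lipschitz/linear-growth theorem because $\ta$ is a genuinely random explosion time for the \emph{spatial} coordinate picture, and the clock $t_s$ feeding $\a(t_s)$ is itself part of the system. I expect the cleanest route is a stopping-time localization $\ta_n=\inf\{s: t_s\ge T-1/n\}$ as in Lemma \ref{LEM.UNIEXITEMP}: on $[0,\ta_n)$ the coefficients are nice, one gets $\E[\sup_{s\le\ta_n}b_s]\le C$ uniformly in $n$ by Doob plus the bound $\int_0^T\a^2<\infty$, and then Fatou lets $n\to\infty$. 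Everything else is routine stochastic calculus.
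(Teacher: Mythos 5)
Your overall route is sound and genuinely different from the paper's. The paper argues multiplicatively: applying It\^o's formula to $\log(a_s^2)$ it writes $\log(a_s^2/a_{s_0}^2)=\s^2A_s+2\s M_s$ and shows that both the additive functional $A_s$ and the martingale $M_s$ converge as $s\to\ta$; the point there is that the contributions to $A_s$ and $\langle M\rangle_s$ from $\{\dot t_u^2-1\ge\e\}$ are bounded by constants times $\ta<+\infty$, while the contributions from $\{\dot t_u^2-1<\e\}$ are comparable to each other, so their divergence would force $\dot t_s\to+\infty$ and contradict infinitely many visits near $\dot t=1$. That identity yields finiteness and positivity of $a_\infty$ in one stroke. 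You instead work additively on $a_s^2$ (discounted to kill the linear drift), aim to get convergence from the convergence/oscillation dichotomy for continuous local martingales, and import positivity from the exponential representation in the proof of Lemma \ref{LEM.UNIEXITEMP}. Steps 1, 2 and 4 are correct; note that $\ta\le T-t_0$ deterministically (since $\dot t_s\ge1$), which together with the boundedness of $\a$ on $[t_0,T)$ in the Big-Crunch case is all you need for the boundedness of the discount factor and of the drift integral.

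The soft spot is Step 3, and it is less a missing idea than a failure to trust the right one. The dichotomy you quote already closes the argument with no a priori bound on $\sup_{s<\ta}a_s^2$: write $b_s=b_0+V_s+N_s$ with $V_s$ nondecreasing, $V_\ta<+\infty$ a.s.\ (Step 2), and $N$ a continuous local martingale on $[0,\ta)$. On the event $\{\langle N\rangle_\ta=+\infty\}$ one has $\liminf_{s\to\ta}N_s=-\infty$, hence $\liminf_{s\to\ta} b_s=-\infty$, contradicting $b_s\ge0$; so a.s.\ $\langle N\rangle_\ta<+\infty$, $N_s$ converges and $b_s$ converges to a finite limit --- the boundedness of $a_s$ near $\ta$ is an output, not an input. (Your phrase ``oscillation to $+\infty$ is excluded'' points at the wrong side: it is the \emph{downward} oscillation that contradicts nonnegativity.) By contrast, the fallback you call the cleanest route does not work as sketched: since $d\langle M^{a^2}\rangle_s$ grows like $a_s^4\,ds$, ``Doob plus $\int_0^T\a^2<\infty$'' does not give $\E[\sup_{s\le\ta_n}b_s]\le C$; you would need BDG together with a Gronwall argument on higher moments (or the usual square-root buckling trick), i.e.\ exactly the machinery the dichotomy lets you avoid. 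With Step 3 stated as above, your proof is complete; it trades the paper's single logarithmic identity (which gives positivity for free) for an appeal to Lemma \ref{LEM.UNIEXITEMP}'s representation, and is otherwise routine stochastic calculus.
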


\begin{proof}
From Equation (\ref{eqn.temp}), applying It\^o's formula, we get for all $0<s_0<s<\ta$:
\begin{equation}\log \left( \frac{\dot{t}_s^2-1}{\dot{t}_{s_0}^2-1}\right) =\log \left( \frac{\a^2(t_{s_0})}{\a^2(t_s)}\right)
+ \s^2 \int_{s_0}^s\frac{(d-1) \dot{t}_u^2-1}{\dot{t}_u^2-1} du+ 2 \s \int_{s_0}^s\frac{\dot{t}_u}{\sqrt{\dot{t}_u^2-1}}dB_u, \label{logtpointcarremoinsun}\end{equation}
or equivalently
\begin{equation}\log \left( \frac{a^2_s}{a^2_{s_0}}\right) = \s^2\underbrace{\int_{s_0}^s\frac{(d-1)  \dot{t}_u^2-1}{\dot{t}_u^2-1} du}_{A_s}+ 2 \s \underbrace{\int_{s_0}^s\frac{\dot{t}_u}{\sqrt{\dot{t}_u^2-1}}dB_u}_{M_s}. \label{loga2}\end{equation}
We will show that both  $M_s$ and $A_s$ converge almost surely when $s$ goes to $\ta$. Fix  $0<\e<1$ and decompose $M_s$ into
$ \langle M \rangle_s = \langle M \rangle_s^+ + \langle M \rangle_s^-$ with
$$\langle M \rangle_s^+:=\int_{s_0}^s\frac{\dot{t}_u^2}{\dot{t}_u^2-1}1_{\{\dot{t}_u^2-1 \geq \e\}}\,du, 
\qquad \langle M \rangle_s^-:=\int_{s_0}^s\frac{\dot{t}_u^2}{\dot{t}_u^2-1}1_{\{\dot{t}_u^2-1 <\e\}}\,du.$$
In the same way, write $ A_s = A_s^+ + A_s^-$ with
$$A_s^+:=\int_{s_0}^s\frac{(d-1)\dot{t}_u^2-1}{\dot{t}_u^2-1}1_{\{\dot{t}_u^2-1 \geq \e\}}\,du, 
\qquad A_s^-:=\int_{s_0}^s\frac{(d-1)\dot{t}_u^2-1}{\dot{t}_u^2-1}1_{\{\dot{t}_u^2-1 <\e\}}\,du.$$
Both $\langle M \rangle_s^+$ and $A_s^+$ are non decreasing and almost surely bounded: 
$$\begin{array}{cll} 
\langle M \rangle_s^+ &   \leq  & \e^{-1} (1+\e) \ta < +\ii, \\
A_s^+ & \leq & \e^{-1} ((d-2)+(d-1)\e) \ta < +\ii, \\
\end{array}
$$
hence they converge almost surely when $s$ goes to $\ta$. Besides, we have 
\begin{equation}\begin{array}{rcl} 
\ds{\int_{s_0}^s\frac{1_{\{\dot{t}_u^2-1 <\e\}}}{\dot{t}_u^2-1}\,du} \leq &  \langle M \rangle_s^- &  \leq \ds{(1+\e) \int_{s_0}^s\frac{1_{\{\dot{t}_u^2-1 <\e\}}}{\dot{t}_u^2-1}\,du},  \\
\\
(d-2)\ds{\int_{s_0}^s\frac{1_{\{\dot{t}_u^2-1 <\e\}}}{\dot{t}_u^2-1}\,du}  \leq & A_s^- & \leq \ds{((d-2)+(d-1)\e) \int_{s_0}^s\frac{1_{\{\dot{t}_u^2-1 <\e\}}}{\dot{t}_u^2-1}\,du},  \\
\end{array}\label{encadrementMA}
\end{equation}
and the processes $\langle M_s\rangle^-$ are $A_s^-$ both convergent or both divergent when $s$ goes to $\ta$. Let us suppose that they are divergent. Necessarily, $\dot{t}_s$ would meet the ball $B(1,\e)$ infinitly often (or it would stay in the ball). Since $\langle M \rangle_s = O(A_s)$ the process $A_s+M_s$ would thus tend to infinity almost surely, so as $\log(\a^2(t_{s_0})/\a^2(t_s))$. The right hand side of Equation (\ref{logtpointcarremoinsun}), and thus the process $\dot{t}_s$ would go to infinity. This contradicts the fact that $\dot{t}_s$ meets the ball $B(1,\e)$ infinitly often. The two processes $M_s$ and $A_s$ are thus almost surely convergent when $s$ goes to $\ta$, and from Equation (\ref{loga2}) $a_s$ converges to a random variable  $a_{\infty} \in (0, +\infty)$. Since $\a(t_s) \to \a(T)=0$ almost surely, necessarily we have $\dot{t}_s \to +\infty$.
\end{proof}

\subsubsection{Preliminaries for the long time asymptotics}
\smallskip
We now turn to the case where $T=+\infty$. 
To highlight the recurrence/transience dichotomy stated in Theorem \ref{theo.causalderivetemp}, let us first begin with the two simplest cases when the Hubble function is a constant, namely the case when $H \equiv 0$  i.e. when the torsion function $\alpha$ is constant, and the case when $H(t)\equiv H> 0$ for all $t>0$. In both cases, the process $\dot{t}_s$ is a one dimensional diffusion process and there exists a real standard Brownian motion $B$ such that, if $H \equiv 0$:
\begin{equation}d \dot{t}_s=  + \frac{d \s^2}{2} \dot{t}_s ds+ \s \sqrt{\dot{t}_s^2-1} dB_s,\label{eqn.stat}\end{equation}
or if $H >0$:
\begin{equation}d \dot{t}_s= -H \times \left( \dot{t}_s^2-1 \right) ds  + \frac{d \s^2}{2} \dot{t}_s ds+ \s \sqrt{\dot{t}_s^2-1} dB_s.\label{eqn.expo}\end{equation}

\begin{lemma} \label{LEM.CASSTAT}
For any starting point $\dot{t}_0 \geq 1$, Equation (\ref{eqn.stat}) admits a unique strong solution $\dot{t}_s$, which is well defined for all $s \in \mathbb R^+$, and such that $\dot{t}_s >1$ almost surely for all $s>0$. Moreover, there exist a real process $u_s$ that converges almost surely when $s$ goes to infinity such that for all $s  \geq 0$:
$$\dot{t}_s = \dot{t}_0 \: \exp \left(\frac{d-1}{2} \s^2 s + \s B_s   + u_s  \right).$$
In particular, the process $\dot{t}_s$ is transient.
\end{lemma}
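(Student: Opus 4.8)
The plan is to treat (\ref{eqn.stat}) as the $H\equiv 0$ special case of the temporal equation (\ref{eqn.ttpoint}), i.e.\ the case where $\a$ is a positive constant, which I may take equal to $1$; then $\dot t_s$ is autonomous and decoupled from $t_s$. For existence, pathwise uniqueness, and the strict inequality $\dot t_s>1$ for all $s>0$ (including the boundary case $\dot t_0=1$, handled as in Lemma \ref{LEM.UNIEXITEMP} by noting that the process instantaneously leaves $1$), I would invoke the very same change of variable $\dot t_s\mapsto a_s^2=\dot t_s^2-1$ used in the proof of Lemma \ref{LEM.UNIEXITEMP}, which turns the $1/2$-H\"older coefficient into a locally Lipschitz one. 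To obtain a globally defined solution I would argue directly that the coefficients $\tfrac{d\s^2}{2}\dot t$ and $\s\sqrt{\dot t^2-1}\le\s\dot t$ of (\ref{eqn.stat}) have at most linear growth, so the solution cannot explode in finite time; I avoid citing Lemma \ref{LEM.LIFETIME} here since that lemma will itself rely on the present one.

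The core of the argument is the exponential representation. First I would apply It\^o's formula to $\log\dot t_s$ in (\ref{eqn.stat}), obtaining $\log\dot t_s=\log\dot t_0+\tfrac{(d-1)\s^2}{2}s+\tfrac{\s^2}{2}\int_0^s\dot t_u^{-2}\,du+\s\int_0^s\tfrac{\sqrt{\dot t_u^2-1}}{\dot t_u}\,dB_u$. Splitting the stochastic integral as $\s B_s+\s\int_0^s\big(\tfrac{\sqrt{\dot t_u^2-1}}{\dot t_u}-1\big)dB_u$ then yields exactly the claimed identity $\dot t_s=\dot t_0\exp\big(\tfrac{d-1}{2}\s^2 s+\s B_s+u_s\big)$ with $u_s:=\tfrac{\s^2}{2}\int_0^s\dot t_u^{-2}\,du+\s\int_0^s\big(\tfrac{\sqrt{\dot t_u^2-1}}{\dot t_u}-1\big)dB_u$, so the whole matter reduces to proving that $u_s$ converges almost surely, after which transience is immediate: feeding $B_s=o(s)$ and $u_s\to u_\infty$ finite back into the representation gives $\tfrac1s\log(\dot t_s/\dot t_0)\to\tfrac{(d-1)\s^2}{2}>0$, whence $\dot t_s\to+\infty$ a.s.

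Here I expect the main obstacle: the convergence of $u_s$ looks circular, since it essentially requires $\dot t_s$ to grow fast enough for $\int_0^\infty\dot t_u^{-2}\,du$ to be finite, which is close to the conclusion. I would break this with a short bootstrap. Dropping the nonnegative term $\tfrac{\s^2}{2}\int_0^s\dot t_u^{-2}du$ in the It\^o identity gives $\log\dot t_s\ge\log\dot t_0+\tfrac{(d-1)\s^2}{2}s+\s M'_s$ with $M'_s:=\int_0^s\tfrac{\sqrt{\dot t_u^2-1}}{\dot t_u}\,dB_u$ a continuous local martingale of quadratic variation $\langle M'\rangle_s=\int_0^s(1-\dot t_u^{-2})\,du\le s$; by Dambis--Dubins--Schwarz and the strong law of large numbers for Brownian motion, $M'_s=o(s)$ a.s.\ (whether or not $\langle M'\rangle_\infty<\infty$), so, using $d\ge 3$, $\liminf_{s\to\infty}\tfrac1s\log\dot t_s\ge\tfrac{(d-1)\s^2}{2}>0$, hence $\dot t_s\ge e^{cs}$ for all large $s$ with $c:=\tfrac{(d-1)\s^2}{4}>0$. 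Since $\dot t_u\ge 1$ everywhere, this forces $\int_0^\infty\dot t_u^{-2}\,du<\infty$ a.s.; the increasing part of $u_s$ then converges, and the martingale part, whose quadratic variation is at most $\s^2\int_0^\infty\dot t_u^{-4}\,du\le\s^2\int_0^\infty\dot t_u^{-2}\,du<\infty$ (using $(\sqrt{1-x}-1)^2\le x^2$ for $x\in[0,1]$), also converges a.s. Thus $u_s\to u_\infty$ a.s.\ for a finite random variable $u_\infty$, which completes the proof. Apart from this bootstrap, every step is routine stochastic calculus.
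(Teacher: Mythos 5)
Your proposal is correct and follows essentially the same route as the paper: Itô's formula applied to $\log \dot{t}_s$, the split of the stochastic integral into $\sigma B_s$ plus a remainder, and the a priori bound $\log \dot{t}_s \geq \frac{d-1}{2}\sigma^2 s + o(s)$ (your ``bootstrap'') to force convergence of both the drift integral and the remainder martingale, the latter being exactly the term the paper writes via a time-changed Brownian motion $B'$. The only (welcome) cosmetic difference is that you justify global non-explosion directly by the linear growth of the coefficients instead of citing Lemma \ref{LEM.UNIEXITEMP}, which indeed sidesteps any appearance of circularity with Lemma \ref{LEM.LIFETIME}.
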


\begin{proof}
Existence, uniqueness and the lower bound were obtained in Lemma \ref{LEM.UNIEXITEMP}. Applying It\^o's formula to the logarithm function yields
$$ d \log(\dot{t}_s) = \s^2 \left(\frac{d-1}{2} + \frac{1}{2\dot{t}_s^2} \right) ds + \s \sqrt{1 - \dot{t}_s^{-2}} d B_s.$$
Thus, there exists a real standard Brownian motion $B'$ such that 
$$\log(\dot{t}_s)  = \log(\dot{t}_0)  +\frac{d-1}{2} \s^2 s +  \s B_s + \underbrace{\frac{\s^2}{2} \int_0^s \frac{du}{\dot{t}_u^2} - B' \left( \s^2 \int_0^s\frac{du}{\dot{t}_u^4 \left(  1 + \sqrt{1-\dot{t}_u^{-2}}\right)^2} \right)}_{:=u_s},$$
and when $s$ goes to infinity, we have almost surely:
$$\log(\dot{t}_s)  \geq \frac{d-1}{2}  \s^2 s + o(s) > \frac{d-1}{4}  \s^2 s.$$
The two integrals in the definition of $u_s$ are thus convergent, hence the result.
\end{proof}
\medskip

\begin{lemma}  \label{LEM.CASEXPO}
For any starting point $\dot{t}_0 \geq 1$, Equation (\ref{eqn.expo}) admits a unique strong solution $\dot{t}_s$ which is well defined for all $s \in \mathbb R^+$ and satisfies  $\dot{t}_s >1$ for all $s>0$ almost surely. Moreover, the process $\dot{t}_s$ admits an invariant probability measure $ {\n}_{H,\s}$ on $(1, +\ii)$, hence it is ergodic. The measure $\n_{H,\s}$ has the following density with respect to the Lebesgue measure on $(1, +\infty)$: 
$$ \n_{H,\s}(dx):= \frac{1}{Z_{H,\s}} (x^2-1)^{d/2-1} \exp \left( - \frac{2 H}{\sigma^2} x \right)dx.$$
 where $Z_{H,\s}$ is a normalizing constant.
\end{lemma}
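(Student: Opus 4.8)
The plan is to treat Equation (\ref{eqn.expo}) as a one-dimensional time-homogeneous It\^o diffusion on the open interval $(1,+\infty)$ and apply the classical Feller theory of scalar diffusions (scale function, speed measure, boundary classification). First I would recall from Lemma \ref{LEM.UNIEXITEMP} that existence, strong uniqueness, and the strict lower bound $\dot{t}_s>1$ are already established (the change of variable $\dot t\mapsto a^2$ removes the $1/2$-H\"older degeneracy at $\dot t=1$, and the non-explosion at the upper end in the $H\equiv 0$ case is Lemma \ref{LEM.CASSTAT}; a fortiori the drift $-H(\dot t^2-1)$ only helps at $+\infty$, so non-explosion persists). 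The drift is $b(x)=-H(x^2-1)+\tfrac{d\sigma^2}{2}x$ and the diffusion coefficient is $a(x)=\sigma^2(x^2-1)$, so I would compute the scale density $s(x)=\exp\!\big(-\int^x \tfrac{2b(y)}{a(y)}\,dy\big)$. One has $\tfrac{2b(y)}{a(y)}=\tfrac{-2H(y^2-1)+d\sigma^2 y}{\sigma^2(y^2-1)}=-\tfrac{2H}{\sigma^2}+\tfrac{dy}{y^2-1}$, whose primitive is $-\tfrac{2H}{\sigma^2}y+\tfrac{d}{2}\log(y^2-1)$, so $s(x)=(x^2-1)^{-d/2}e^{2Hx/\sigma^2}$. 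The speed density is then $m(x)=\tfrac{1}{a(x)s(x)}=\tfrac{1}{\sigma^2}(x^2-1)^{d/2-1}e^{-2Hx/\sigma^2}$, which is exactly the claimed invariant density up to the normalizing constant $Z_{H,\sigma}$.

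Next I would verify that $m$ is integrable on $(1,+\infty)$: near $x=1$, $(x^2-1)^{d/2-1}$ is integrable since $d\ge 3$ gives exponent $d/2-1>-1$ (in fact $\ge 1/2$); near $+\infty$, the factor $e^{-2Hx/\sigma^2}$ with $H>0$ dominates any polynomial growth, so $\int_1^\infty m(x)\,dx<\infty$ and $Z_{H,\sigma}:=\int_1^\infty(x^2-1)^{d/2-1}e^{-2Hx/\sigma^2}\,dx\in(0,\infty)$ is well-defined. This finite speed measure is the candidate invariant probability; one checks directly (or quotes the general fact for scalar diffusions) that $\nu_{H,\sigma}(dx)=Z_{H,\sigma}^{-1}m(x)\,dx$ is invariant for the generator $\mathcal{A}f=\tfrac12 a f''+bf'$, e.g. by writing $\mathcal{A}f=\tfrac{1}{2m}\big(\tfrac{f'}{s}\big)'$ and integrating against $m$.

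Then I would address boundary behavior and the resulting ergodicity/positive recurrence. The boundary $x=1$: using Feller's integral tests with the scale and speed densities above, $1$ is a non-attracting, non-exit boundary (this is the diffusion-theoretic reflection of the pathwise statement $\dot t_s>1$ a.s. from Lemma \ref{LEM.UNIEXITEMP}); the boundary $x=+\infty$: $\int^\infty s(x)\,dx=\int^\infty (x^2-1)^{-d/2}e^{2Hx/\sigma^2}\,dx=+\infty$ because of the exponential factor, so $+\infty$ is non-attracting as well, and combined with $\int^\infty m<\infty$ it is a natural (non-accessible) boundary. Hence the diffusion is recurrent with a finite invariant measure, i.e. positive recurrent, therefore ergodic, and $\nu_{H,\sigma}$ is its unique invariant probability measure; the ergodic theorem for positive-recurrent scalar diffusions then gives the stated conclusion. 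The main obstacle is purely bookkeeping: carefully matching the normalization conventions in the Feller tests (the factor $\sigma^2$ in $m$ vs. the stated density, and the arbitrary lower limits in the indefinite integrals, which only affect multiplicative constants absorbed into $Z_{H,\sigma}$) and confirming the boundary classification at $x=1$ cleanly despite the degeneracy of $a(x)$ there — but since $d\ge 3$ makes the speed density bounded near $1$, no subtlety actually arises.
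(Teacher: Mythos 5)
Your proposal is correct and follows essentially the same route as the paper: existence, uniqueness and the strict lower bound $\dot{t}_s>1$ are imported from Lemma \ref{LEM.UNIEXITEMP} (together with non-explosion by comparison with the $H\equiv 0$ case), and the invariance of $\nu_{H,\sigma}$ is the elementary check the paper leaves to the reader, which your scale/speed-measure computation carries out explicitly and correctly. Your added Feller boundary classification and positive-recurrence argument simply make precise the ergodicity claim that the paper states without detail.
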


\begin{proof}
Again, existence, uniqueness and the lower bound were obtained in Lemma \ref{LEM.UNIEXITEMP}. Finally, one easily checks that the probability measure $\n_{H,\s}$ is invariant, hence the result.
\end{proof}

In the case when $T=+\infty$, the following comparison result will be usefull. 

\begin{lemma} \label{LEM.COMPARPROJ}
Let $(t_s, \dot{t}_s)$ be the solution of Equation (\ref{eqn.ttpoint}) starting from $(t_0, \dot{t}_0) \in  (0,+\infty) \times [1, +\infty)$ where the martingale $M^{\dot{t}}$ is represented by a real Brownian motion $B$: $d M^{\dot{t}}_s =  ( \dot{t}_s^2-1)^{1/2} d B_s$ and consider the two processes $u_s$ and $v_s$ defined as the unique strong solutions starting from $u_0=v_0=\dot{t}_0$ of the following equations: 
$$\begin{array}{l} 
\ds{d u_s=  -H(t_0)\left(u_s^2-1 \right)ds + \frac{d \s^2}{2} u_s ds+ \s \sqrt{u_s^2-1} dB_s}, \\
\ds{d v_s=  -H_{\ii}\left(v_s^2-1 \right)ds+  \frac{d \s^2}{2} v_s ds+ \s \sqrt{v_s^2-1} dB_s}.   
\end{array}
$$
Then, almost surely, for all $0 \leq s <+\ii$ we have $\ds{ u_s \leq \dot{t}_s \leq v_s}$.
\end{lemma}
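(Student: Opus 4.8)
The plan is to use a standard stochastic comparison theorem for one-dimensional diffusions driven by the same Brownian motion, exploiting the monotonicity of the Hubble function $H$. The key observation is that all three processes $u_s$, $\dot{t}_s$, $v_s$ solve stochastic differential equations of the form $dy_s = -H_{\star}(s)(y_s^2-1)\,ds + \frac{d\s^2}{2}y_s\,ds + \s\sqrt{y_s^2-1}\,dB_s$ with the \emph{same} driving Brownian motion $B$, where the ``drift parameter'' is the constant $H(t_0)$ for $u_s$, the time-dependent (but path-dependent) quantity $H(t_s)$ for $\dot{t}_s$, and the constant $H_{\ii}$ for $v_s$. Since $\alpha$ is $\mathrm{log}$-concave (Hypothesis \ref{hypo.1}), $H$ is nonincreasing, and since $t_s \geq t_0$ for all $s$ (because $\dot{t}_s \geq 1$, as noted in the proof of Lemma \ref{LEM.LIFETIME}), we have $H_{\ii} \leq H(t_s) \leq H(t_0)$ for all $s \geq 0$, almost surely. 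Thus the drift felt by $\dot{t}_s$ is pointwise sandwiched between the drifts of $v_s$ (smallest drift parameter, hence largest drift, hence largest process) and $u_s$ (largest drift parameter, hence smallest process).

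The main steps I would carry out are: first, reduce to locally Lipschitz coefficients via the change of variable $y \mapsto b := \alpha \text{-type}$ square or, more simply, work with $a_s$-type variables as in Lemma \ref{LEM.UNIEXITEMP}, since the diffusion coefficient $\s\sqrt{y^2-1}$ is only $1/2$-H\"older near $y=1$ and the naive comparison theorem does not apply directly. Alternatively, one invokes a comparison theorem valid for $1/2$-H\"older (more precisely, Yamada--Watanabe-type) diffusion coefficients: the Ikeda--Watanabe comparison theorem (Theorem VI.1.1 of \cite{ikeda}) applies precisely when the diffusion coefficient is the same for both processes and satisfies a Yamada--Watanabe condition, which $x \mapsto \s\sqrt{x^2-1}$ does on $(1,+\ii)$. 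Second, I would verify the ordering of drift coefficients: writing $\b(h,x) := -h(x^2-1) + \frac{d\s^2}{2}x$, one checks $\b(H_{\ii},x) \geq \b(H(t_s),x) \geq \b(H(t_0),x)$ for all $x > 1$ since $x^2 - 1 > 0$ and $H_{\ii} \leq H(t_s) \leq H(t_0)$. Third, I would apply the comparison theorem on the (random) time intervals where all processes stay in $(1,+\ii)$ and extend to all of $\R^+$ using that none of the three processes reaches $1$ or $+\ii$ in finite time, by Lemmas \ref{LEM.UNIEXITEMP}, \ref{LEM.CASSTAT}, and \ref{LEM.CASEXPO}.

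The subtle point is that $\dot{t}_s$ does not solve an autonomous one-dimensional SDE: its drift depends on $t_s$, which is itself an integral of $\dot{t}_s$. So strictly speaking one should phrase the comparison as: on the event $\{u_s \leq \dot{t}_s\}$ up to time $s$, the drift of $\dot{t}_s$ dominates that of $u_s$ (because $t_s \geq t_0$ always, \emph{independently} of the ordering between $u_s$ and $\dot{t}_s$), so the comparison propagates. In other words, the bound $H(t_s) \leq H(t_0)$ holds unconditionally, which is exactly what makes the one-sided comparison with the \emph{autonomous} process $u_s$ go through without circularity; symmetrically $H(t_s) \geq H_{\ii}$ holds unconditionally, giving $\dot{t}_s \leq v_s$. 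I expect this decoupling argument — checking that the drift inequality holds pathwise and unconditionally, so that the classical comparison theorem for the pair $(u_s, \dot{t}_s)$ and the pair $(\dot{t}_s, v_s)$ applies despite $\dot{t}_s$ not being a genuine one-dimensional diffusion — to be the only real subtlety; everything else is a routine invocation of \cite{ikeda}.
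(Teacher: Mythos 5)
Your proposal is correct and is essentially the paper's own argument: the paper likewise invokes the Ikeda--Watanabe comparison theorem (Theorem 1.1 p.~352 of \cite{ikeda}) with the common diffusion coefficient $\s\sqrt{x^2-1}$, taking $b_1=b_2=-H(t_0)(x^2-1)+\frac{d\s^2}{2}x$ and letting the path-dependent drift of $\dot{t}_s$ enter only through $\beta_2(s)=-H(t_s)(\dot{t}_s^2-1)+\frac{d\s^2}{2}\dot{t}_s$, which dominates $b_2(s,\dot{t}_s)$ because $t_s\geq t_0$ and $H$ is nonincreasing. The ``decoupling'' point you single out (the bound $H_{\ii}\leq H(t_s)\leq H(t_0)$ holds pathwise and unconditionally, so no circularity arises) is exactly how the paper's one-line application of the theorem works, and your remark that the Yamada--Watanabe condition handles the $1/2$-H\"older coefficient makes the preliminary change of variable unnecessary, just as in the paper.
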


\begin{proof}
Thanks to the monotonicity of the Hubble function $H$, the Lemma is a direct application of classical comparison resuts. For example, to justify that $u_s \leq \dot{t}_s$, one can apply Theorem 1.1 p.352 of \cite{ikeda}, with $\sigma(s,x)=\sigma \sqrt{x^2-1}$, $x_1(s)=u_s$, $x_2(s)=\dot{t}_s$, $b_1(s,x) = b_2(s,x) = - H(t_0) (x^2-1) + \frac{d \s^2}{2} x$, and finally $\beta_1(s)=b_1(s,x_1(s))$ and $\beta_2(s) = - H(t_s) (x_2(s)^2-1) + \frac{d \s^2}{2} x_2(s)$.
\end{proof}

\subsubsection{Recurrence in exponential case} \label{sec.temprec}

\smallskip
In this paragraph, we establish the fourth point of Theorem \ref{theo.causalderivetemp}, i.e. the recurrence of the process $\dot{t}_s$ in the case $T=+\infty$ and the Hubble function admits a positive limit: $\ds{H_{\infty}= \lim_{t \to +\infty} H(t)>0}$.

\begin{prop}  \label{PRO.ERGO}
Suppose that the Hubble function $H$ is decreasing on $(0,+\infty)$ and that $H_{\infty}>0$. 
Let $(t_s, \dot{t}_s)$ be the solution of Equation (\ref{eqn.ttpoint}) starting from $(t_0, \dot{t}_0) \in  (0,+\infty) \times [1, +\infty)$. Then the non-Markovian process $\dot{t}_s$ is Harris recurrent in $(1, +\ii)$. More precisely, if $f$ is a monotone and $\n_{H_{\ii},\s}-$integrable function, or if it is continuous and bounded, then when $s$ goes to infinity, we have the almost sure convergence: 
$$\frac{1}{s} \: \int_0^s f(\dot{t}_u)du \stackrel{a.s.}{\longrightarrow} \n_{H_{\ii},\s}(f):=\int f d\n_{H_{\ii},\s} . $$
\end{prop}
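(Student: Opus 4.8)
The plan is to exploit the comparison Lemma \ref{LEM.COMPARPROJ} together with the ergodicity of the two constant-Hubble models (Lemma \ref{LEM.CASEXPO}) to sandwich the non-Markovian occupation measure $\frac1s\int_0^s f(\dot t_u)\,du$ between two ergodic averages whose limits can be made arbitrarily close. First I would fix $\varepsilon>0$ and, using $H_\infty>0$ and the monotonicity of $H$, choose $t_1$ large enough that $H_\infty\le H(t)\le H_\infty+\varepsilon$ for all $t\ge t_1$. Since $\dot t_s\ge 1$ forces $t_s\ge t_0+s\to+\infty$, there is an almost surely finite time $s_1$ after which $t_s\ge t_1$; applying Lemma \ref{LEM.COMPARPROJ} on $[s_1,+\infty)$ (re-started from $(t_{s_1},\dot t_{s_1})$) gives, pathwise, processes $\ul v_s\le \dot t_s\le v_s$ where $\ul v$ solves \eqref{eqn.expo} with constant Hubble parameter $H_\infty+\varepsilon$ and $v$ with parameter $H_\infty$, both driven by the same Brownian motion and both genuine one-dimensional diffusions on $(1,+\infty)$.

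Next I would invoke the classical ergodic theorem for one-dimensional diffusions: by Lemma \ref{LEM.CASEXPO} each of $\ul v$ and $v$ is positive recurrent with invariant law $\n_{H_\infty+\varepsilon,\s}$, resp. $\n_{H_\infty,\s}$, so for any $\n$-integrable $f$ (monotone, hence controlled on $[1,+\infty)$, or bounded continuous) the ratio-limit / Birkhoff theorem for recurrent diffusions yields $\frac1s\int_0^s f(v_u)\,du\to \n_{H_\infty,\s}(f)$ and likewise for $\ul v$, almost surely, and this limit is insensitive to the initial condition. If $f$ is nondecreasing, the pathwise bound $\ul v_s\le\dot t_s\le v_s$ gives $f(\ul v_s)\le f(\dot t_s)\le f(v_s)$, whence
\[
\n_{H_\infty+\varepsilon,\s}(f)\ \le\ \liminf_{s\to\infty}\frac1s\int_0^s f(\dot t_u)\,du\ \le\ \limsup_{s\to\infty}\frac1s\int_0^s f(\dot t_u)\,du\ \le\ \n_{H_\infty,\s}(f)
\]
almost surely (the finitely many contributions from $u\le s_1$ wash out after division by $s$); for nonincreasing $f$ the inequalities reverse, and a bounded continuous $f$ is handled by writing it as a difference of bounded nondecreasing functions or by a direct two-sided monotone envelope. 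Finally I would let $\varepsilon\downarrow 0$: since $\n_{H,\s}(dx)=Z_{H,\s}^{-1}(x^2-1)^{d/2-1}\exp(-2Hx/\s^2)\,dx$ depends continuously on $H$, dominated convergence gives $\n_{H_\infty+\varepsilon,\s}(f)\to\n_{H_\infty,\s}(f)$, which squeezes the limit and proves $\frac1s\int_0^s f(\dot t_u)\,du\to\n_{H_\infty,\s}(f)$ a.s. Harris recurrence of $\dot t_s$ in $(1,+\infty)$ then follows by taking $f$ to be indicators of (or continuous approximations to) intervals: the occupation time of every nonempty open subinterval grows linearly, so $\dot t_s$ returns to every such set at arbitrarily large times, almost surely.

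The main obstacle I anticipate is the bookkeeping around the non-Markovianity and the uniform integrability as $\varepsilon\to0$: one must make sure the comparison is applied on a random but a.s. finite time interval and that the monotone envelopes of $f$ are themselves $\n_{H,\s}$-integrable uniformly for $H$ in a neighborhood of $H_\infty$ (which is where $H_\infty>0$ is essential — it keeps the exponential tail of $\n_{H,\s}$ under control and prevents mass from escaping to $+\infty$). A secondary point requiring care is justifying the ergodic theorem in the ratio form with arbitrary (random) starting point $(t_{s_1},\dot t_{s_1})$ for the comparison diffusions, but this is standard for positive recurrent one-dimensional diffusions once the invariant probability has been identified in Lemma \ref{LEM.CASEXPO}.
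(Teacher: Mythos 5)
Your proposal is correct and follows essentially the same route as the paper: a pathwise sandwich via the comparison Lemma \ref{LEM.COMPARPROJ} between two constant-Hubble diffusions that are ergodic by Lemma \ref{LEM.CASEXPO}, the ergodic theorem applied to each bound for monotone $f$, and a limit in the comparison parameter using continuity of $\nu_{H,\sigma}(f)$ in $H$, with the extension to bounded continuous $f$ by monotone decomposition/regularization. The only (cosmetic) difference is that the paper takes the lower comparison process with Hubble constant $H(t_0+n)$ started by coinciding with $\dot t_s$ on $[0,n]$ and lets $n\to\infty$, whereas you take $H_\infty+\varepsilon$ after an a.s.\ finite time and let $\varepsilon\downarrow 0$.
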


\begin{proof}
Let $(t_s, \dot{t}_s)$ be the solution of Equation (\ref{eqn.temp}) starting from $(t_0, \dot{t}_0)$.
Let $z_s$ and $z_s^n$, $n \in \mathbb N$, be the processes defined as follows. 
The process $z_s$  is the strong solution starting from $z_0=\dot{t}_0$ of the equation
$$d z_s= -H_{\ii} \times \left( |z_s|^2-1 \right)ds  + \frac{d \s^2}{2} z_s ds+ \s \sqrt{|z_s|^2-1} \,dB_s.$$
For all $n \in \mathbb N$, the processes $z^n_s$ coincide with $\dot{t}_s$ on the interval $[0,n]$ and satisfy the following equations on $[n, +\ii[$
$$d z^n_s= -H(t_0+n) \times \left( |z^n_s|^2-1 \right)ds  + \frac{d \s^2}{2} z^n_s ds+ \s \sqrt{|z^n_s|^2-1} \,dB_s.$$
Almost surely, for all $n \geq 0$  and for all $s \geq 0$, we then have $z^n_s \leq \dot{t}_s \leq z_s$. 
Indeed, the inequality $\dot{t}_s \leq z_s$ was obtained in Lemma \ref{LEM.COMPARPROJ} and the other inequality $z^n_s \leq \dot{t}_s$ is also a consequence of Lemma \ref{LEM.COMPARPROJ}, taking initial conditions $t_0' = t_n \geq t_0+n$ and 
$\dot{t}_0'= \dot{t}_n$. From Lemma \ref{LEM.CASEXPO}, the two processes $z^0_s$ and $z_s$ are ergodic in $(1, +\ii)$, hence they are Harris recurrent and so is $ \dot{t}_s$.
Take an increasing and $\n_{H_{\ii},\s}-$integrable function $f$ and fix $\e>0$.
The function $f$ is integrable against $\n_{H(t_0+n),\s}$ for all $n \in \mathbb N$ and when $n$ goes to infinity, we have 
$\n_{H(t_0+n),\s}(f) \longrightarrow \n_{H_{\ii},\s}(f)$. Take $n$ large enough so that
$$ | \n_{H(t_0+n),\s}(f) - \n_{H_{\ii},\s}(f)| \leq \e.$$
Since $z^n_s \leq \dot{t}_s \leq z_s$ for all $s \geq 0$, we have almost surely:
$$\ds{\int_0^s f(z^n_u)du \leq \int_0^s f(\dot{t}_u)du \leq \int_0^s f(z_u)du}.$$
The integer $n$ being fixed, from the ergodic Theorem, we have almost surely, when  $s$ goes to infinity:
$$\n_{H(t_0+n),\s}(f) \leq \liminf_{s \to + \ii} \frac{1}{s} \: \int_0^s f(\dot{t}_u)du \leq \limsup_{s \to + \ii}\frac{1}{s} \; \int_0^s f(\dot{t}_u)du \leq \n_{H_{\ii},\s}(f),$$
hence
$$\n_{H_{\ii},\s}(f)- \e  \leq \liminf_{s \to + \ii} \frac{1}{s} \: \int_0^s f(\dot{t}_u)du \leq \limsup_{s \to + \ii}\frac{1}{s} \; \int_0^s f(\dot{t}_u)du \leq \n_{H_{\ii},\s}(f).$$
Letting $\varepsilon$ go to zero, we get the announced result. As any regular function can be written as the difference of two monotone functions, the convergence extends $f \in C^{1}_b=\{ f, \; f' \; \textrm{is bounded} \; (1, +\ii)\}$, and finally by regularization, to $f \in C^0_b((1,+\ii), \mathbb R)$.
\end{proof}

\subsubsection{Almost sure transience if the growth is at most polynomial} \label{SEC.ASYMPTEMPTRANS}
\smallskip
We now deal with the second point of Theorem \ref{theo.causalderivetemp}, i.e. the transience of the process $\dot{t}_s$ in the case $T=+\infty$ and the growth rate of the expansion function $\alpha$ is at most polynomial. 
Let us first prove the following lemma which is valid as soon as $H_{\ii}=0$ i.e. in both polynomial and subexponential cases.
\begin{lemma}  \label{lem.control.exp}
Let $(t_s, \dot{t}_s)$ be the solution of Equation (\ref{eqn.ttpoint}) starting from $(t_0, \dot{t}_0) \in (0,+\infty) \times [1, +\infty)$. If $H_{\ii}=0$, then almost surely, when $s$ goes to infinity we have
\[ 
\log \left( \a(t_s)\dot{t}_s\right) = \log \left( \int_{.}^{t_s} \a(u) du \right)= \frac{d-1}{2} \s^2 \times s + o(s). 
\]
\end{lemma}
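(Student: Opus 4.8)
The plan is to apply It\^o's formula to the process $\log(\a(t_s)\dot t_s)$, the point being that the drift $H(t_s)\dot t_s\,ds$ produced by $d\log\a(t_s)$ almost cancels the drift $-H(t_s)(\dot t_s^2-1)/\dot t_s\,ds$ produced by $d\log\dot t_s$, leaving only the harmless term $H(t_s)/\dot t_s\,ds$. Starting from Equation~(\ref{eqn.temp}) and using $\a'/\a=H$, one finds, for any $0<s_0<s$,
\[\log\!\big(\a(t_s)\dot t_s\big)=\log\!\big(\a(t_{s_0})\dot t_{s_0}\big)+\frac{(d-1)\s^2}{2}(s-s_0)+\int_{s_0}^{s}\Big(\frac{H(t_u)}{\dot t_u}+\frac{\s^2}{2\dot t_u^2}\Big)du+\s\int_{s_0}^{s}\frac{\sqrt{\dot t_u^2-1}}{\dot t_u}\,dB_u,\]
where $s_0>0$ is chosen so that $\dot t_{s_0}>1$ (possible by Lemma~\ref{LEM.UNIEXITEMP}). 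It therefore suffices to prove that the last three terms are each $o(s)$ almost surely.

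Two of these are immediate. The stochastic integral is a continuous local martingale whose bracket $\int_{s_0}^{s}(1-\dot t_u^{-2})\,du$ is bounded by $s$, hence it is $o(s)$ almost surely by the strong law for local martingales. For the first drift term, $\dot t_u\ge1$ forces $t_u\ge t_0+u\to+\ii$, and since $H$ is nonincreasing with $H_\ii=0$ we get $0\le H(t_u)/\dot t_u\le H(t_u)\to0$, so the Ces\`aro average $\frac1s\int_{s_0}^{s}H(t_u)/\dot t_u\,du$ tends to $0$.

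The main obstacle is the remaining term, i.e. the Ces\`aro convergence $\frac1s\int_0^s\dot t_u^{-2}\,du\to0$ almost surely; this is exactly where the hypothesis $H_\ii=0$ enters (when $H_\ii>0$ the process $\dot t_s$ is Harris recurrent by Proposition~\ref{PRO.ERGO} and this integral grows linearly). Fix $\e>0$ and choose $t_\e$ with $H\le\e$ on $[t_\e,+\ii)$; since $t_s\ge t_0+s$, the time $s_\e:=\inf\{s:\ t_s\ge t_\e\}$ is finite. Restarting the temporal diffusion at $s_\e$ and applying the comparison of Lemma~\ref{LEM.COMPARPROJ} with the constant Hubble value $H(t_{s_\e})\le\e$, one gets $\dot t_s\ge w_s$ for all $s\ge s_\e$, where $w$ is the ergodic diffusion of Lemma~\ref{LEM.CASEXPO} with invariant probability $\n_{H(t_{s_\e}),\s}$. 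Applying the ergodic theorem to the bounded continuous function $x\mapsto1/x^2$ then yields
\[\limsup_{s\to+\ii}\frac1s\int_0^s\frac{du}{\dot t_u^2}\ \le\ \limsup_{s\to+\ii}\frac1s\int_{s_\e}^s\frac{du}{w_u^2}\ =\ \n_{H(t_{s_\e}),\s}(1/x^2)\ \le\ \sup_{0<h\le\e}\n_{h,\s}(1/x^2).\]
To finish, for $R>1$ and $h\le\e$ write $\n_{h,\s}(1/x^2)\le\n_{h,\s}([1,R])+R^{-2}\le C_R/Z_{h,\s}+R^{-2}\le C_R/Z_{\e,\s}+R^{-2}$, where $C_R:=\int_1^R(x^2-1)^{d/2-1}dx<+\ii$; since $Z_{\e,\s}\uparrow\int_1^{+\ii}(x^2-1)^{d/2-1}dx=+\ii$ as $\e\downarrow0$ (the integral diverges because $d\ge3$), letting $\e\to0$ and then $R\to+\ii$ gives $\sup_{0<h\le\e}\n_{h,\s}(1/x^2)\to0$. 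Hence $\frac1s\int_0^s\dot t_u^{-2}\,du\to0$, and together with the previous paragraph this proves $\log(\a(t_s)\dot t_s)=\frac{d-1}{2}\s^2\,s+o(s)$ almost surely.

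It remains to identify $\log\big(\int_{\cdot}^{t_s}\a(u)\,du\big)$. Set $Y_s:=\int_{t_0}^{t_s}\a(u)\,du$, so that $\dot Y_s=\a(t_s)\dot t_s=e^{cs+o(s)}$ with $c:=\frac{d-1}{2}\s^2>0$ (using $d\ge3$). A standard Laplace-type estimate concludes: for $\d\in(0,c)$ one eventually has $e^{(c-\d)u}\le\dot Y_u\le e^{(c+\d)u}$, and integrating in $u$ squeezes $Y_s$ between constant multiples of $e^{(c-\d)s}$ and $e^{(c+\d)s}$; letting $\d\to0$ gives $\log Y_s=cs+o(s)$. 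As $Y_s\to+\ii$, changing the lower endpoint in $\int_{\cdot}^{t_s}\a$ alters $Y_s$ only by an additive constant, whence $\log\big(\int_{\cdot}^{t_s}\a(u)\,du\big)=cs+o(s)$ too, and the three quantities of the statement coincide up to $o(s)$.
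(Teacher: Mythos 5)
Your proof is correct and follows essentially the same route as the paper: It\^o's formula applied to $\log(\a(t_s)\dot t_s)$, the martingale and $H(t_u)/\dot t_u$ terms being easy, with the key step the Ces\`aro convergence $\frac1s\int_0^s \dot t_u^{-2}\,du\to0$ obtained, as in the paper, by comparison (Lemmas \ref{LEM.COMPARPROJ} and \ref{LEM.CASEXPO}) with an ergodic constant-Hubble diffusion whose invariant mean of $1/x^2$ vanishes as the Hubble constant tends to zero. The only differences are cosmetic: you restart at the random hitting time $s_{\e}$ with the random constant $H(t_{s_\e})$ (which needs a one-line strong Markov/conditioning remark before invoking the ergodic theorem) and bound $\n_{h,\s}(1/x^2)$ by truncation, where the paper uses a deterministic time, the fixed constant $\eta$, and a change of variables with dominated convergence, and you make explicit the final ``by integration'' step.
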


\begin{proof}
From Equation (\ref{eqn.temp}), It\^o's formula gives $\log \left( \a(t_s)\dot{t}_s\right) = \frac{d-1}{2} \s^2  \: s +  v_s$, with
\begin{equation} v_s:= \log( \a(t_0)\dot{t}_0 )+\int_0^s \frac{H(t_u)}{\dot{t}_u}du + \int_0^s \frac{\s^2}{2 \dot{t}_u^2}du +\s  B_s -
\s \int_0^s \frac{\dot{t}_u^{-2}}{1+\sqrt{1-\frac{1}{\dot{t}_u^2}}} \: dB_u. \label{eqn.v}\end{equation}
From the law of iterated logarithm, almost surely when $s$ goes to infinity,  we have
\begin{equation} |B_s| + \left|\int_0^s \frac{\dot{t}_u^{-2}}{1+\sqrt{1-\frac{1}{\dot{t}_u^2}}} \: dB_u\right| = o(s).\label{eqn.petito1}\end{equation}
Otherwise,  almost surely when $s$ goes to infinity, we also have
\begin{equation}\int_0^s \frac{H(t_u)}{\dot{t}_u} du + \int_0^s \frac{du}{ \dot{t}_u^2} = o (s).\label{eqn.petito2}\end{equation}
Indeed, since $H_{\ii}=0$, $\dot{t}_s \geq 1$ and $t_s \geq s$ for all $s \geq 0$, when $s$ goes to infinity, we have naturally:
$$\int_0^s \frac{H(t_u)}{\dot{t}_u} du=o(s).$$
Now, fix $\eta>0$ and consider the (deterministic) stopping time $\ta_{\eta}:=\inf \{s>0, \; H(s) \leq \eta\}$.
Let $z_s$ be the diffusion process that coincides with $\dot{t}_s$ on  $[0, \ta_{\eta}]$ and which is solution to 
the following stochastic differential equation on  $[\ta_{\eta},+\ii)$:
\[
d z_s= - \eta \times \left( z_s^2-1 \right)ds  + \frac{d \s^2}{2} z_s ds+ \s \sqrt{z_s^2-1} \,dB_s.
\]
From Lemma \ref{LEM.COMPARPROJ} (with initial conditions $t_0'=t_{\ta_{\eta}}$ and $\dot{t}_0'=\dot{t}_{\ta_{\eta}}$), almost surely, one has
\[ 
z_s \leq \dot{t}_s,\;\; \textrm{for all $s \geq 0$}. 
\]
From Lemma \ref{LEM.CASEXPO}, the process $z_s$ is ergodic in  $(1,+\ii)$, with invariant probability $\nu_{\eta,\s}$. The function $x \mapsto 1/x^2$ being integrable against  $\nu_{\eta,\s}$, from the ergodic Theorem, we have almost surely when $s$ goes to infinity:
\[
\frac{1}{s} \: \int_0^s \frac{du}{ z_u^2} \longrightarrow C(\eta,\s^2):=\int_{1}^{+\ii} x^{-2} \n_{\eta, \s}(x)dx
= \frac{\displaystyle{\int_{1}^{+\ii} x^{-2}  (x^2-1)^{d/2-1} e^{ - \frac{2 \eta}{\sigma^2} x }dx}}{\displaystyle{\int_1^{+\infty}(x^2-1)^{d/2-1}  e^{- \frac{2 \eta}{\sigma^2} x }dx}}.
\]
Settingt $\eta'=2 \eta/\s^2$ and performing the change of variable $u=\eta' (x-1)$, for $\eta'$ small enough:
{\small $$C(\eta,\s^2)=
\frac{\ds{\int_{0}^{+\ii} \frac{\eta'^2}{(u+\eta')^2}  (u(u+2\eta'))^{d/2-1} e^{-u} du}}{\ds{\int_{0}^{+\ii}(u(u+2\eta'))^{d/2-1} e^{-u} du} }\leq
\frac{\ds{\int_{0}^{+\ii} \frac{\eta'^2}{(u+\eta')^2}  (u(u+1))^{d/2-1} e^{-u} du}}{\ds{\int_{0}^{+\ii}u^{d-2} e^{-u} du} }.$$}\par
\noindent
The parameter $\s$ being fixed, from the dominated convergence theorem, $C(\eta,\s^2)$ goes to zero with $\eta$. Let $\epsilon>0$ and $\eta$ small enough so that $ C(\eta,\s^2)\leq \epsilon/2$. Almost surely, for $s$ large enough, we get
$$\frac{1}{s} \; \int_0^s \frac{du}{ \dot{t}_u^2} \leq \frac{1}{s} \: \int_0^s \frac{du}{ z_u^2}  \leq 2 C(\eta,\s^2)\leq \epsilon,$$
hence the estimate (\ref{eqn.petito2}). The two estimates (\ref{eqn.petito1}) and (\ref{eqn.petito2}) show that $v_s=o(s)$, or in other words
$$ \log \left(\a(t_s)\dot{t}_s\right) = \frac{d-1}{2}\s^2 \times s + o(s), \;\; \hbox{and by integration} \;\; \log \left( \int_{.}^{t_s} \a(u) du \right)= \frac{d-1}{2}\s^2 \times s + o(s).$$
\end{proof}

From Lemma \ref{lem.control.exp}, we can now deduce the transience of the temporal process $(t_s, \dot{t}_s)$ when $T=+\infty$ and the expansion is at most polynomial. 

\begin{cor}\label{cor.trans.poly}
If $T=\infty$ and the growth of the torsion function is at most polynomial, then the process $\dot{t}_s$ is almost surely transient.
\end{cor}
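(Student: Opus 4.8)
The plan is to read off the transience directly from Lemma \ref{lem.control.exp}, the only additional ingredient being an elementary deterministic manipulation based on the very definition of ``at most polynomial'' growth. Since the growth of $\a$ is at most polynomial, Definition \ref{defi.2} gives $H_{\ii}=0$, so Lemma \ref{lem.control.exp} applies and yields, almost surely as $s\to\ii$,
\[
\log\left(\a(t_s)\dot{t}_s\right)=\log\left(\int_{.}^{t_s}\a(u)\,du\right)=\frac{d-1}{2}\,\s^2\,s+o(s).
\]
In particular $\log\bigl(\int_{.}^{t_s}\a(u)\,du\bigr)\to+\ii$, and obviously $t_s\geq t_0+s\to\ii$. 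The key is then to write $\log\dot{t}_s=\log\!\left(\a(t_s)\dot{t}_s\right)-\log\a(t_s)$ and to control the subtracted term by means of the second condition in Definition \ref{defi.2}.

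First I would choose $\rho\in(0,1)$ such that $\limsup_{t\to\ii}\log\a(t)/\log\bigl(\int^{t}\a(u)\,du\bigr)<\rho$. Since $t_s\to\ii$ and $\log\bigl(\int_{.}^{t_s}\a(u)\,du\bigr)\to+\ii$, for $s$ large enough we have simultaneously $\log\bigl(\int_{.}^{t_s}\a(u)\,du\bigr)\geq 1$ and
\[
\log\a(t_s)<\rho\,\log\left(\int_{.}^{t_s}\a(u)\,du\right),
\]
the last inequality being valid whatever the sign of $\log\a(t_s)$, its right-hand side being then nonnegative. Plugging this into the decomposition of $\log\dot{t}_s$ yields, for $s$ large,
\[
\log\dot{t}_s\;>\;\log\left(\a(t_s)\dot{t}_s\right)-\rho\,\log\left(\int_{.}^{t_s}\a(u)\,du\right).
\]

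Then I would apply Lemma \ref{lem.control.exp} to each of the two terms on the right-hand side, which gives
\[
\log\dot{t}_s\;>\;(1-\rho)\,\frac{d-1}{2}\,\s^2\,s+o(s),
\]
and this tends to $+\ii$ since $d\geq 3$, $\s>0$ and $1-\rho>0$. Hence $\dot{t}_s\to+\ii$ almost surely, which is in particular the announced transience (and even furnishes a quantitative lower bound on the speed of divergence). I do not expect any real obstacle: all the analytic work --- controlling the martingale part via the law of the iterated logarithm and the drift part via the ergodic comparison with the process $z_s$ from Lemma \ref{LEM.CASEXPO} --- has already been carried out inside Lemma \ref{lem.control.exp}. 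The one mild point deserving care is that the polynomial-growth hypothesis of Definition \ref{defi.2} must be used as an \emph{eventual} inequality on $\log\a(t)$, which is exactly what the strict bound $\limsup<1$ provides.
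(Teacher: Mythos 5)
Your proof is correct and follows essentially the same route as the paper: both arguments combine Lemma \ref{lem.control.exp} with the condition $\limsup_{t \to +\infty} \log\a(t)/\log\bigl(\int^t \a(u)\,du\bigr)<1$ from Definition \ref{defi.2} to bound $\log\dot{t}_s=\log\bigl(\a(t_s)\dot{t}_s\bigr)-\log\a(t_s)$ from below along the paths. The only (harmless) difference is presentational: the paper states the conclusion as $\liminf_{s\to+\infty}\log\dot{t}_s/\log\bigl(\int^{t_s}\a(u)\,du\bigr)>0$, whereas you extract the explicit almost sure lower bound $(1-\rho)\frac{d-1}{2}\s^2 s+o(s)$ on $\log \dot{t}_s$, which is a slightly more quantitative form of the same estimate.
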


\begin{proof} 
From Lemma \ref{lem.control.exp}, if the expansion is at most polynomial, we have almost surely when $s$ goes infinity
\[
 \frac{\log \left( \a(t_s)\dot{t}_s\right)}{ \log \left( \int_{.}^{t_s} \a(u) du \right)}= 1+o(1) \quad \hbox{and} \quad
\limsup_{s \to +\infty} \frac{\log \left( \a(t_s)\right)}{ \log \left( \int_{.}^{t_s} \a(u) du \right)}<1.
\]
We deduce that almost surely
\[
\liminf_{s \to +\infty}\frac{\log \left(\dot{t}_s\right)}{ \log \left( \int_{.}^{t_s} \a(u) du \right)}>0,
\]
 hence the result.
\end{proof}

Moreover, we can give explicit speeds of divergence. 
\begin{prop} \label{PRO.ASYMPC}Let $(t_s, \dot{t}_s)$ be the solution of Equation (\ref{eqn.ttpoint}) starting from $(t_0, \dot{t}_0) \in (0,+\infty) \times [1, +\infty)$.
Suppose that $H_{\infty}=0$ and that the torsion function $\alpha$ has polynomial growth of rate $c \in [0, +\infty)$ at infinity in the sense that $H(t) \times t$ converges to $c>0$ when $t$ goes to infinity.
Then almost surely, when $s$ goes to infinity, the process $\dot{t}_s$ is transient and we have
$$ \frac{1}{s} \log(\dot{t}_s) \longrightarrow \frac{d-1}{2} \frac{\s^2}{1+c},\quad \frac{1}{s} \log(\a(t_s)) \longrightarrow \frac{d-1}{2}  \frac{\s^2 \: c}{1+c}.$$
In particular, recalling that $a_s=\alpha(t_s) \sqrt{\dot{t}_s^2-1}$, we have almost surely, when $s$ goes to infinity:
$$ \frac{1}{s} \log\left(\frac{a_s}{\a^2(t_s)}\right)  \longrightarrow \frac{d-1}{2} \s^2 \:\left(\frac{1-c}{1+c}\right).$$
\end{prop}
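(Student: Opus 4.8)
The plan is to deduce everything from Lemma~\ref{lem.control.exp}, which applies here since $H_{\ii}=0$, and which already provides the almost sure asymptotics
\[
\log\bigl(\a(t_s)\,\dot{t}_s\bigr)\;=\;\log\Bigl(\int_{.}^{t_s}\a(u)\,du\Bigr)\;=\;\frac{d-1}{2}\,\s^2\,s+o(s),\qquad s\to+\ii .
\]
The whole point is then to separate the product $\a(t_s)\dot{t}_s$ into its two factors by using the polynomial growth hypothesis $H(t)\,t\to c$ to pin down $\log t_s$, hence $\log\a(t_s)$, on the linear scale in $s$.

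The first task is to convert the assumption $H(t)\,t\to c$ into logarithmic asymptotics for $\a$ and for its primitive. Writing $\log\a(t)=\log\a(t_1)+\int_{t_1}^t H(u)\,du$ and inserting $H(u)=(c+o(1))/u$ gives $\log\a(t)=c\log t+o(\log t)$, i.e. $\log\a(t)/\log t\to c$. Since $H\geq 0$ on $(0,+\ii)$ in the case $T=+\ii$, the function $\a$ is nondecreasing; bounding $\a(u)$ between $u^{c-\e}$ and $u^{c+\e}$ for large $u$ and integrating then yields $\log\bigl(\int^{t}\a(u)\,du\bigr)/\log t\to c+1$. This is the Karamata-type step of the proof, and it is precisely the monotonicity of $\a$ that makes the elementary comparison rigorous.

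Next, since $t_s\geq t_0+s\to+\ii$, I would substitute $\log\bigl(\int_{.}^{t_s}\a(u)\,du\bigr)=(c+1)\log t_s+o(\log t_s)$ into the second equality of the display above. Because $c+1\geq 1$, this first forces $\log t_s=O(s)$ (so that the $o(\log t_s)$ term is in fact $o(s)$), and then $\frac1s\log t_s\to\frac{d-1}{2}\frac{\s^2}{1+c}$. Feeding this into $\log\a(t_s)=c\log t_s+o(\log t_s)$ gives $\frac1s\log\a(t_s)\to\frac{d-1}{2}\frac{\s^2 c}{1+c}$, and subtracting from the first equality of the display yields
\[
\frac1s\log\dot{t}_s\;\longrightarrow\;\frac{d-1}{2}\s^2-\frac{d-1}{2}\frac{\s^2 c}{1+c}=\frac{d-1}{2}\,\frac{\s^2}{1+c}.
\]
This limit being strictly positive, $\dot{t}_s\to+\ii$, so $\dot{t}_s$ is transient (this also follows from Corollary~\ref{cor.trans.poly}).

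Finally, for the claim on $a_s=\a(t_s)\sqrt{\dot{t}_s^2-1}$ from Equation~(\ref{def.a}): since $\dot{t}_s\to+\ii$ we have $\log\sqrt{\dot{t}_s^2-1}=\log\dot{t}_s+\tfrac12\log(1-\dot{t}_s^{-2})=\log\dot{t}_s+o(1)$, so $\log a_s=\log(\a(t_s)\dot{t}_s)+o(1)=\frac{d-1}{2}\s^2 s+o(s)$; subtracting $2\log\a(t_s)=(d-1)\frac{\s^2 c}{1+c}s+o(s)$ gives
\[
\frac1s\,\log\!\left(\frac{a_s}{\a^2(t_s)}\right)\;\longrightarrow\;\frac{d-1}{2}\,\s^2\!\left(1-\frac{2c}{1+c}\right)=\frac{d-1}{2}\,\s^2\,\frac{1-c}{1+c}.
\]
I expect the only real obstacle to be the second step — turning $H(t)\,t\to c$ into the precise $(c+1)\log t$ asymptotics of $\int^t\a$ — together with checking that the bookkeeping of $o(s)$ versus $o(\log t_s)$ is not circular; the latter is resolved exactly because $c+1\geq 1>0$, everything else being routine.
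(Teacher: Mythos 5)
Your proposal is correct and follows essentially the same route as the paper: both arguments rest on Lemma \ref{lem.control.exp} and convert the hypothesis $H(t)\,t\to c$ into the asymptotics $\log\bigl(\int^{t}\a(u)\,du\bigr)\sim(c+1)\log t$, from which $\tfrac1s\log t_s$, then $\tfrac1s\log\a(t_s)$ and $\tfrac1s\log\dot t_s$, and finally the estimate for $a_s/\a^2(t_s)$ are read off. The paper organizes the same computation with explicit $\e$-bounds and a $\liminf$/$\limsup$ squeeze instead of your $O(s)$-versus-$o(\log t_s)$ bookkeeping, but the content is identical.
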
 
\noindent

\begin{proof}Let us suppose that, when $t$ goes to infinity, $H(t) \times t$ tends to $c \in [0, +\ii)$. Let $0< \e<1$, and $t_0$ large enough so that
for all $t \geq t_0$: $c-\e \leq H(t) \times t \leq c+\e$. There exists two constants $c_{0}$ and $c_0'$ such that, for all $t \geq t_0$: 
$$  (c-\e+1) \log(t) + c_0 \leq \log \left( \int_{t_0}^{t} \a(u) du \right) \leq (c+\e+1) \log(t) + c_0'. $$
From Lemma \ref{lem.control.exp}, almost surely when $s$ goes to infinity
$$ \lim_{s \to +\ii} \frac{1}{s} \log \left( \int_{t_0}^{t_s} \a(u) du \right) =\frac{d-1}{2}  \s^2.$$
Thus, almost surely when $s$ goes to infinity
$$  \frac{d-1}{2} \frac{\s^2}{1+c+\e} \leq \liminf_{s \to +\ii} \frac{1}{s} \log(t_s)  \leq \limsup_{s \to +\ii} \frac{1}{s} \log(t_s)\leq \frac{d-1}{2}  \frac{\s^2}{1+c-\e},$$
and letting $\varepsilon$ go to zero:
$$\lim_{s \to +\ii} \frac{1}{s} \log(t_s)  =\frac{d-1}{2} \frac{\s^2}{1+c}.$$ 
Moreover, since $t\mapsto \a(t)$ grows as $t^c$ at infinity, we have  
$$ \lim_{s \to +\ii} \frac{1}{s} \log(\a(t_s)) = \frac{d-1}{2} \frac{\s^2 \: c}{1+c}, \;\; \hbox{and from Lemma \ref{lem.control.exp} again} \;\;  \lim_{s \to +\ii} \frac{1}{s} \log(\dot{t}_s) =\frac{d-1}{2} \frac{\s^2}{1+c}. $$
In particular
$$ \lim_{s \to +\ii} \frac{1}{s} \log\left(\frac{a_s}{\a^2(t_s)}\right) = \lim_{s \to +\ii} \frac{1}{s} \log\left(\frac{\dot{t}_s}{\a(t_s)}\right) = \frac{d-1}{2}  \s^2 \:\left(\frac{1-c}{1+c}\right).$$ 
\end{proof}

\subsubsection{Transience in probability in the subexponential case} 
\smallskip
The last case to consider is the one where the torsion function $\alpha$ has a subexponential growth. Let us first prove that in that case, the temporal derivative $\dot{t}_s$ goes to infinity in probability. The next proposition shows that it is the case as soon as $H_{\infty}=0$.

\begin{prop} \label{PRO.TRANSPROB}Let $(t_s, \dot{t}_s)$ be the solution of Equation (\ref{eqn.ttpoint}) starting from $(t_0, \dot{t}_0) \in (0,+\infty) \times [1, +\infty)$. Suppose that $H_{\infty}=0$, then for all $R>1$ we have $\liminf_{s \to +\infty} \mathbb P(\dot{t}_s> R) =1$.
\end{prop}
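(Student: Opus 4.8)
I want to show that for any $R>1$, $\liminf_{s\to\infty}\P(\dot t_s>R)=1$ under the sole assumption $H_\infty=0$. The natural strategy is to exploit the two-sided comparison provided by Lemma \ref{LEM.COMPARPROJ}, together with the fine control on $\log(\a(t_s)\dot t_s)$ obtained in Lemma \ref{lem.control.exp}. Recall that Lemma \ref{lem.control.exp} gives, almost surely,
$$
\log\!\big(\a(t_s)\dot t_s\big)=\frac{d-1}{2}\s^2 s+o(s),
$$
so in particular $\a(t_s)\dot t_s\to+\infty$ almost surely and a fortiori in probability. If we also knew that $\a(t_s)$ does not itself blow up at the exponential rate $e^{\frac{d-1}{2}\s^2 s}$ along a non-negligible part of probability space, we would be done; the point, however, is that in the subexponential case $\a$ can grow almost that fast, so one cannot simply read off a lower bound on $\dot t_s$ pathwise (this is exactly why the statement is only ``in probability'' and not ``almost surely'', cf. Proposition \ref{pro.H3ssi}).

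\textbf{Main step.} Fix $\e>0$ and a (deterministic) level $\eta>0$ small, and introduce the stopping time $\ta_\eta:=\inf\{s>0,\ H(t_s)\le\eta\}$. Since $H_\infty=0$ and $t_s\ge t_0+s\to+\infty$, we have $\ta_\eta<+\infty$ almost surely; moreover $\P(\ta_\eta\le s)\to 1$ as $s\to\infty$. On $[\ta_\eta,+\infty)$ define, as in the proof of Lemma \ref{lem.control.exp}, the comparison diffusion $z_s$ agreeing with $\dot t_s$ on $[0,\ta_\eta]$ and solving the \emph{constant-Hubble} equation with parameter $\eta$ on $[\ta_\eta,+\infty)$; by Lemma \ref{LEM.COMPARPROJ} (applied from time $\ta_\eta$ with $H(t_{\ta_\eta})\le\eta$, so that monotonicity of $H$ gives $H(t_u)\le\eta$ for $u\ge\ta_\eta$) one gets $z_s\le\dot t_s$ for all $s\ge 0$ on the event $\{\ta_\eta\le s\}$. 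By Lemma \ref{LEM.CASEXPO}, $z_s$ is ergodic on $(1,+\infty)$ with invariant law $\nu_{\eta,\s}$, hence for any fixed $R>1$,
$$
\P(z_s>R)\ \longrightarrow\ \nu_{\eta,\s}\big((R,+\infty)\big)\qquad(s\to\infty),
$$
and as $\eta\downarrow 0$ the measures $\nu_{\eta,\s}$ have density $\propto (x^2-1)^{d/2-1}e^{-\frac{2\eta}{\s^2}x}$ whose mass escapes to $+\infty$, so $\nu_{\eta,\s}\big((R,+\infty)\big)\to 1$. Combining, for $\eta$ small enough and $s$ large enough, $\P(z_s>R)\ge 1-\e/2$; intersecting with $\{\ta_\eta\le s\}$ (whose probability is $\ge 1-\e/2$ for $s$ large) and using $z_s\le\dot t_s$ there gives $\P(\dot t_s>R)\ge 1-\e$ for all large $s$. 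Since $\e>0$ and $R>1$ were arbitrary, this yields $\liminf_{s\to\infty}\P(\dot t_s>R)=1$.

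\textbf{A point requiring care.} Strictly speaking the convergence of $z_s$ in distribution to $\nu_{\eta,\s}$ must be invoked with the correct (random) initial data at time $\ta_\eta$, i.e. one should condition on $\FF_{\ta_\eta}$ and use that the stationary convergence of the constant-Hubble diffusion is uniform in the starting point (it is a one-dimensional positive-recurrent diffusion on $(1,+\infty)$, so total-variation convergence to $\nu_{\eta,\s}$ holds and can be made uniform on compacts; a coupling / comparison from a fixed deterministic point below suffices since $z$ is monotone in its initial condition). The genuinely delicate bookkeeping is therefore not any single estimate but the interleaving of the three approximations — $\eta$ small (to push $\nu_{\eta,\s}$ mass past $R$), $s$ large given $\eta$ (for ergodicity and for $\P(\ta_\eta\le s)$), and the comparison $z_s\le\dot t_s$ valid only on $\{\ta_\eta\le s\}$ — which is exactly the obstacle I would expect to spend most of the writing on; each individual ingredient is already supplied by Lemmas \ref{LEM.COMPARPROJ}, \ref{LEM.CASEXPO} and \ref{lem.control.exp}.
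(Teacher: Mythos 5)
Your proposal is correct and follows essentially the paper's own argument: compare $\dot t_s$, from the time when $H$ drops below a small level $\eta$, with the constant-Hubble diffusion $z_s$ of Lemma \ref{LEM.CASEXPO} via Lemma \ref{LEM.COMPARPROJ}, use its ergodicity to get $\mathbb P(z_s>R)\to\nu_{\eta,\sigma}((R,+\infty))$, and let $\eta\downarrow 0$ so that this mass tends to $1$. The only real difference is that the paper takes the \emph{deterministic} time $\tau_\eta:=\inf\{s>0:\ H(s)\le\eta\}$ (legitimate since $\dot t_s\ge 1$ gives $t_s\ge t_0+s$ and $H$ is nonincreasing, so $H(t_u)\le\eta$ for $u\ge\tau_\eta$), which removes the random-stopping-time and uniformity bookkeeping you flag as the delicate point.
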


\begin{proof}
Let us proceed as in the proof of Lemma \ref{lem.control.exp}. Namely, fix $R>1$ and $\eta>0$ and consider the (deterministic) stopping time $\ta_{\eta}:=\inf \{s>0, \; H(s) \leq \eta\}$.
Let $z_s$ be the diffusion process that coincides with $\dot{t}_s$ on  $[0, \ta_{\eta}]$ and which is solution to 
the following stochastic differential equation on  $[\ta_{\eta},+\ii)$:
\[
d z_s= - \eta \times \left( z_s^2-1 \right)ds  + \frac{d \s^2}{2} z_s ds+ \s \sqrt{z_s^2-1} \,dB_s.
\]
From Lemma \ref{LEM.COMPARPROJ} (with initial conditions $t_0'=t_{\ta_{\eta}}$ and $\dot{t}_0'=\dot{t}_{\ta_{\eta}}$), almost surely, one has
$z_s \leq \dot{t}_s$ for all $s \geq 0$, so that $\mathbb P( \dot{t}_s > R) \geq \mathbb P( z_s > R)$. Moreover, by Lemma \ref{LEM.CASEXPO} the process $z_s$ is ergodic with invariant measure $\nu_{\eta, \sigma}$ and $\lim_{s \to +\infty} \mathbb P( z_s > R) = \nu_{\eta, \sigma}([R, +\infty))$ where 
\[
\nu_{\eta, \sigma}([R, +\infty))=\frac{\displaystyle{\int_{R}^{+\ii}  (x^2-1)^{d/2-1} e^{ - \frac{2 \eta}{\sigma^2} x }dx}}{\displaystyle{\int_1^{+\infty}(x^2-1)^{d/2-1}  e^{- \frac{2 \eta}{\sigma^2} x }dx}} = \frac{\displaystyle{\int_{\eta(R-1)}^{+\ii}  (u(u+2\eta))^{d/2-1} e^{ - \frac{2}{\sigma^2} u }du}}{\displaystyle{\int_0^{+\infty}  (u(u+2\eta))^{d/2-1} e^{ - \frac{2}{\sigma^2} u }du}}
\]
goes to one when $\eta$ goes to zero, hence the result.
\end{proof}

\subsubsection{Necessarily and sufficient condition for the almost sure transience}
\smallskip
We now give a necessarily and sufficient criterion that ensures the almost sure transience 
of $\dot{t}_s$ in the case $H_{\infty}=0$. Let us consider the two functions
\[
f(x):=-\int_x^{+\infty} \frac{du}{(u^2-1)^{d/2}}, \qquad g(x):=-\int_x^{+\infty} \frac{du}{u^{d-2}(u^2-1)}.
\]
We have 
\[
f'(x):=\frac{1}{(x^2-1)^{d/2}}, \qquad g'(x):=\frac{1}{x^{d-2}(x^2-1)}.
\]
and
\[
f''(x):=\frac{- d \, x }{(x^2-1)^{d/2+1}}, \qquad g''(x):=-\frac{d }{x^{d-3}(x^2-1)^2}+\frac{(d-2)}{x^{d-1}(x^2-1)^2}.
\]
From It\^o's formula, we then have
\begin{eqnarray}
f(\dot{t}_s)  = f(\dot{t}_0) -\int_0^s \frac{H(t_u)du}{(\dot{t}_u^2-1)^{d/2-1}}  + M_s, \label{eq.trans1} \\
\nonumber \\
g(\dot{t}_s)   = g(\dot{t}_0) -\int_0^s \frac{H(t_u)}{\dot{t}_u^{d-2}} du + R_s, \label{eq.trans2}
\end{eqnarray}
where
\[
M_s  :=\sigma \int_0^s  \frac{1}{(\dot{t}_u^2-1)^{\frac{d-1}{2}}} d B_u, \quad 
R_s :=\frac{d-2}{2} \sigma^2\int_0^s  \frac{du}{\dot{t}_u^{d-1}(\dot{t}_u^2-1)} +\sigma \int_0^s  \frac{1}{\dot{t}_u^{d-2} \sqrt{\dot{t}_u^2-1}} d B_u.
\]

Here is a first necessarily and sufficient criterion:

\begin{prop}\label{prop.ssi}
The process $\dot{t}_s$ goes almost surely to infinity with $s$ if and only if 
\[
\int_0^{+\infty} \frac{H(t_s)}{\dot{t}_s^{d-2}}ds<+\infty \quad \hbox{almost surely}.
\]
\end{prop}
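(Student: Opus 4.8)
The plan is to exploit the two bounded increasing functions $f$ and $g$ introduced above, together with the It\^o identities (\ref{eq.trans1}) and (\ref{eq.trans2}). The first thing I would record is the elementary observation that, because $d\ge 3$, the integrands defining $f$ and $g$ are comparable to $(u-1)^{-d/2}$ and to $(u-1)^{-1}$ near $u=1$ (hence non-integrable there) and to $u^{-d}$ near $+\infty$ (hence integrable); consequently $f$ and $g$ extend to strictly increasing homeomorphisms from $[1,+\infty]$ onto $[-\infty,0]$, with $f(1)=g(1)=-\infty$ and $f(+\infty)=g(+\infty)=0$. Since $\dot{t}_s>1$ for $s>0$ by Lemma \ref{LEM.UNIEXITEMP} (and $t_s\ge t_0+s$, so $H(t_s)$ decreases; recall we are in the case $H_\infty=0$, so $H\ge 0$), this turns ``$\dot{t}_s\to+\infty$'' into ``$f(\dot{t}_s)\to 0$'' or equivalently ``$g(\dot{t}_s)\to 0$''. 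I would in fact aim at the slightly stronger assertion that the events $\{\dot{t}_s\to+\infty\}$ and $\{\int_0^{+\infty}H(t_s)\dot{t}_s^{-(d-2)}\,ds<+\infty\}$ coincide up to a $\mathbb P_0$-null set. The only external inputs needed are the classical facts that a continuous local martingale which is bounded above (by an a.s.\ finite random variable) has finite quadratic variation, hence converges a.s., and that a nonnegative continuous local martingale is a supermartingale.

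For the implication ``$\dot{t}_s\to+\infty$ a.s.\ $\Rightarrow$ the integral is finite a.s.'', I would use (\ref{eq.trans1}) with $f$. Writing $A_s:=\int_0^s H(t_u)(\dot{t}_u^2-1)^{-(d/2-1)}\,du$ (nondecreasing since $H\ge 0$), identity (\ref{eq.trans1}) reads $A_s=f(\dot{t}_0)-f(\dot{t}_s)+M_s$. On $\{\dot{t}_s\to+\infty\}$ the path $s\mapsto f(\dot{t}_s)$ is bounded (it is $\le 0$ and tends to $0$), so $\inf_s M_s>-\infty$ there; hence $M$, and therefore $A$, converges a.s.\ on this event, i.e.\ $A_\infty<+\infty$ a.s.\ on $\{\dot{t}_s\to+\infty\}$. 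I would then bound $\int_0^{+\infty}H(t_u)\dot{t}_u^{-(d-2)}\,du$ by splitting the time axis at the a.s.\ finite random time $S:=\inf\{s\ge 0:\dot{t}_u\ge 2\text{ for all }u\ge s\}$: on $[0,S]$ one has $\dot{t}_u^{-(d-2)}\le 1$ and $H(t_u)\le H(t_0)$, while on $[S,+\infty)$ one has $\dot{t}_u^2-1\ge\tfrac34\dot{t}_u^2$, so $\dot{t}_u^{-(d-2)}\le(3/4)^{-(d/2-1)}(\dot{t}_u^2-1)^{-(d/2-1)}$, whence $\int_0^{+\infty}H(t_u)\dot{t}_u^{-(d-2)}\,du\le H(t_0)\,S+(3/4)^{-(d/2-1)}A_\infty<+\infty$ a.s.\ on $\{\dot{t}_s\to+\infty\}$.

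For the converse ``the integral is finite a.s.\ $\Rightarrow$ $\dot{t}_s\to+\infty$ a.s.'', I would use (\ref{eq.trans2}) and the decomposition $R_s=R^{(1)}_s+R^{(2)}_s$, with $R^{(1)}$ nondecreasing and $R^{(2)}$ a continuous local martingale. Setting $A'_s:=\int_0^s H(t_u)\dot{t}_u^{-(d-2)}\,du$ and $Y_s:=g(\dot{t}_s)-R^{(1)}_s$, identity (\ref{eq.trans2}) gives $Y_s=g(\dot{t}_0)-A'_s+R^{(2)}_s$, and $Y_s\le 0$ since $g\le 0$ and $R^{(1)}\ge 0$. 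On the assumed a.s.\ event $\{A'_\infty<+\infty\}$ this forces $R^{(2)}_s=Y_s-g(\dot{t}_0)+A'_s\le -g(\dot{t}_0)+A'_\infty<+\infty$, so $R^{(2)}$ is bounded above and therefore converges a.s.; hence $Y_s=g(\dot{t}_0)-A'_s+R^{(2)}_s$ converges a.s.\ to a finite limit, and from $0\ge g(\dot{t}_s)=Y_s+R^{(1)}_s$ together with the monotonicity of $R^{(1)}$ one gets first $R^{(1)}_\infty<+\infty$ and then that $g(\dot{t}_s)$ has a finite limit $\ell\le 0$, i.e.\ $\dot{t}_s\to g^{-1}(\ell)\in[1,+\infty]$. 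Finally $\ell=0$: if $g^{-1}(\ell)$ were finite and $>1$, the integrand $\dot{t}_u^{-(d-1)}(\dot{t}_u^2-1)^{-1}$ of $R^{(1)}$ would tend to a strictly positive constant, contradicting $R^{(1)}_\infty<+\infty$; and $g^{-1}(\ell)=1$ is impossible since it would mean $\ell=-\infty$. Hence $\dot{t}_s\to+\infty$ a.s.\ on $\{A'_\infty<+\infty\}$, which closes the argument.

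The step I expect to require the most care is the comparison of the two integrals in the first implication: the drift integrand $(\dot{t}_u^2-1)^{-(d/2-1)}$ appearing in (\ref{eq.trans1}) blows up near the level $\dot{t}_s=1$, whereas the integrand $\dot{t}_s^{-(d-2)}$ we must control stays bounded there, so no global domination is available and one is forced to isolate the a.s.\ finite time $S$ beyond which $\dot{t}_s\ge 2$ and compare only on $[S,+\infty)$. A related subtlety that dictated the shape of the argument is that $\dot{t}_s$ need \emph{not} converge almost surely in general --- in the subexponential regime with $H^d\notin\mathbb L^1$ it fails to --- so one must avoid claiming a.s.\ convergence of $\dot{t}_s$ outright and instead derive it only on the event $\{A'_\infty<+\infty\}$, as done above.
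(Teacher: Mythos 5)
Your argument is correct and is essentially the paper's own proof: it uses the same functions $f$ and $g$ and the same It\^o identities (\ref{eq.trans1})--(\ref{eq.trans2}), the forward direction via $f$ and the converse via the decomposition of $R_s$ into its nondecreasing and martingale parts, merely spelling out the martingale-convergence steps the paper states tersely. The only cosmetic difference is your splitting at the random time $S$: since $\dot{t}_u^2>\dot{t}_u^2-1$ gives $\dot{t}_u^{-(d-2)}\le(\dot{t}_u^2-1)^{-(d/2-1)}$ pointwise (the blow-up near $\dot{t}=1$ is on the dominating side), the integral you must control is globally dominated by $A_s$ — this is what the paper summarizes by saying $I_s$ and $J_s$ are of the same nature — so the extra care near the level $\dot{t}=1$ is not actually needed.
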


\begin{proof}
To simplify the expressions, define 
\[
I_s :=\int_0^s \frac{H(t_u)du}{(\dot{t}_u^2-1)^{d/2-1}} , \qquad J_s:=\int_0^s \frac{H(t_u)}{\dot{t}_u^{d-2}} du.
\]
Note that if $\dot{t}_s$ is transient, then both integrals $I_s$ and $J_s$ are of the same nature i.e. converge or diverge simultaneously.
If $\dot{t}_s$ goes almost surely to infinity with $s$, then $f(\dot{t}_s)$ goes to zero almost surely . From Equation (\ref{eq.trans1}), we deduce that the local martingale $M_s$ has the same asymptotic behavior as the non-decreasing integral $I_s$. This is possible only if both quantities are convergent, in other words  the integral $I_s$ (and thus $J_s$) converge almost surely when $s$ goes to infinity.\\ 
Now, if  $J_s$ converges almost surely when $s$ goes to infinity, then from Equation (\ref{eq.trans2}), the process $R_s$ is bounded above and consequently, it converges almost surely (the martingale term is dominated by the term of finite variation). Finally, $g(\dot{t}_s)$ converges almost surely, and since $R_s$ converges, this is possible only if $\dot{t}_s$ goes to infinity, hence the result.
\end{proof}

To prove Proposition \ref{pro.H3ssi}, we are left to translate the convergence of $J_s$ in terms of the rate of decrease of the Hubble function $H$, or equivalently in terms of integrability of $H^d$.


\begin{proof}[Proof of Proposition \ref{pro.H3ssi}]
From Proposition \ref{prop.ssi},  $\dot{t}_s$ goes almost surely to infinity with $s$ if and only if the integral $J_s$ converges almost surely when $s$ goes to infinity. Integrating by parts, we get
\[
J_s=\int_{0}^s \frac{H(t_u)}{\dot{t}_u^{d-2}}du = \int_{0}^s \frac{du}{\left[H(t_u)\dot{t}_u\right]^{d-2}} \times H^{d-1}(t_s) - (d-1) \int_0^s \left(   \int_{0}^v \frac{du}{\left[H(t_u)\dot{t}_u\right]^{d-2}}\right) H^{d-2}(t_v) H'(t_v) \dot{t}_v dv.
\]
Otherwise, we have the following comparison result:
\begin{lemma}\label{lem.encadre}
There exists two deterministic constants $0<\kappa < K <\infty$ such that, almost surely when $s$ goes to infinity
\[
\kappa \leq \liminf_{s \to +\infty} \frac{1}{s} \, \int_{0}^s \frac{du}{\left[H(t_u)\dot{t}_u\right]^{d-2}} \leq \limsup_{s \to +\infty} \frac{1}{s} \, \int_{0}^s \frac{du}{\left[H(t_u)\dot{t}_u\right]^{d-2}} \leq K.
\]
\end{lemma}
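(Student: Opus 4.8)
The plan is to replace $P_s:=H(t_s)\dot t_s$ by the more tractable process $Q_s:=\a(t_s)\dot t_s/\Phi(t_s)$, where $\Phi(t):=\int_c^t\a(u)\,du$ for a fixed $c\in(0,t_0)$, and to show that the ergodic‑type averages $\tfrac1s\int_0^s Q_u^{-(d-2)}\,du$ are almost surely bounded above and below. The bridge between $P$ and $Q$ is the elementary identity $\a(t)/H(t)=\int_c^t\a(u)\bigl(1-H'(u)/H(u)^2\bigr)du+\a(c)/H(c)$, checked by differentiation (in the subexponential regime $\a$ is increasing, so $H=\a'/\a>0$ and of class $C^1$ on $(0,+\ii)$). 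Log‑concavity gives $1-H'/H^2\ge 1$, Hypothesis~\ref{hypo.2} makes $-H'/H^2$ bounded on $[c,+\ii)$, and $\Phi(t_s)\to+\ii$; from these one gets $P_s=\rho_s\,Q_s$ with $\rho_s$ confined to a deterministic interval $[\rho_-,\rho_+]\subset(0,+\ii)$ for all $s$, so it suffices to control $\tfrac1s\int_0^s Q_u^{-(d-2)}\,du$. Applying It\^o's formula to $\log Q_s=\log\a(t_s)+\log\dot t_s-\log\Phi(t_s)$ and using Equation~(\ref{eqn.temp}), the two occurrences of $H(t_s)\dot t_s$ cancel and one finds
\[
d\log Q_s=\Bigl(\tfrac{H(t_s)}{\dot t_s}+\tfrac{(d-1)\s^2}{2}+\tfrac{\s^2}{2\dot t_s^2}-Q_s\Bigr)ds+\s\sqrt{1-\dot t_s^{-2}}\,dB_s=:(\b_s-Q_s)\,ds+\g_s\,dB_s,
\]
where, since $\dot t_s\ge 1$ and $H(t_s)\le H(t_0)$, one has $\tfrac{(d-1)\s^2}{2}\le\b_s\le\tfrac{d\s^2}{2}+H(t_0)$ and $0\le\g_s\le\s$. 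Thus $\log Q_s$ solves a scalar equation with a uniformly confining drift.

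For the lower bound, I would integrate this equation: $\int_0^s Q_u\,du=\log Q_0-\log Q_s+\int_0^s\b_u\,du+\int_0^s\g_u\,dB_u$. Lemma~\ref{lem.control.exp} gives $\log Q_s=\log(\a(t_s)\dot t_s)-\log\Phi(t_s)=o(s)$ almost surely, and the martingale term is $o(s)$ by the strong law of large numbers (its bracket being $\le\s^2 s$), so $\int_0^s Q_u\,du=\int_0^s\b_u\,du+o(s)$ and hence $\limsup_s\tfrac1s\int_0^s Q_u\,du\le\tfrac{d\s^2}{2}+H(t_0)$ a.s. Convexity of $x\mapsto x^{-(d-2)}$ on $(0,+\ii)$ and Jensen's inequality then yield $\tfrac1s\int_0^s Q_u^{-(d-2)}\,du\ge\bigl(\tfrac1s\int_0^s Q_u\,du\bigr)^{-(d-2)}$, whence $\liminf_s\tfrac1s\int_0^s Q_u^{-(d-2)}\,du\ge\bigl(\tfrac{d\s^2}{2}+H(t_0)\bigr)^{-(d-2)}>0$; combined with $P_s=\rho_s Q_s$ this gives the constant $\kappa$.

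The upper bound is the hard part. Applying It\^o's formula to $Q_s^{-k}$ for $1\le k\le d-2$ gives $d(Q_s^{-k})=\bigl(-k(\b_s-\tfrac k2\g_s^2)Q_s^{-k}+kQ_s^{-(k-1)}\bigr)ds-kQ_s^{-k}\g_s\,dB_s$, and the key observation is the sign of $\b_s-\tfrac k2\g_s^2=\tfrac{H(t_s)}{\dot t_s}+\tfrac{(d-1-k)\s^2}{2}+\tfrac{(k+1)\s^2}{2\dot t_s^2}\ge\tfrac{(d-1-k)\s^2}{2}>0$ for $k\le d-2$. Since $Q_s^{-k}\ge 0$, this gives $\tfrac{(d-1-k)\s^2}{2}k\int_0^s Q_u^{-k}\,du\le Q_0^{-k}+k\int_0^s Q_u^{-(k-1)}\,du-k\int_0^s Q_u^{-k}\g_u\,dB_u$; localizing and taking expectations removes the stochastic integral, and an induction on $k$ from $\int_0^s Q_u^{0}\,du=s$ yields $\E\int_0^s Q_u^{-(d-2)}\,du=O(s)$. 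The main obstacle is to promote this $L^1$ estimate to an almost sure one: the bracket of the martingale above involves $\int_0^s Q_u^{-2k}\,du$, an exponent the induction never reaches, and $\g_s$ degenerates where $\dot t_s\to 1$, so a naive pathwise Gronwall/iterated‑logarithm bootstrap does not close. I expect the a.s.\ upgrade to go through a comparison argument in the spirit of the proofs of Proposition~\ref{PRO.ERGO} and Lemma~\ref{lem.control.exp}: freezing the Hubble function (and the degenerate coefficient) at suitable levels $\eta$ so as to sandwich $\log Q_s$ — via the confining structure of its drift — between genuinely ergodic scalar diffusions of the type of Lemma~\ref{LEM.CASEXPO}, observing that $x\mapsto x^{-(d-2)}$ is integrable against each of their invariant measures precisely because $d-1>d-2$ keeps $Q_s$ subcritical at the origin, and passing the ergodic theorem through the squeeze, the degeneracy of $\g_s$ being harmless since it occurs exactly where the drift $\b_s-Q_s$ is strongly upward. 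Together with $P_s=\rho_s Q_s$ this furnishes the upper constant $K$.
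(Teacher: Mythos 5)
Your lower bound is essentially correct, and it is a genuinely different (and rather clean) route to the constant $\kappa$: the identity $\alpha(t)/H(t)=\alpha(c)/H(c)+\int_c^t\alpha(u)\bigl(1-H'(u)/H(u)^2\bigr)du$, log-concavity and Hypothesis \ref{hypo.2} do confine $\rho_s=H(t_s)\Phi(t_s)/\alpha(t_s)$ in a deterministic compact subset of $(0,+\infty)$, your It\^o computation for $\log Q_s$ is right, and Lemma \ref{lem.control.exp}, the martingale law of large numbers and Jensen's inequality then give a deterministic positive lower bound for $\liminf_s\frac1s\int_0^sQ_u^{-(d-2)}du$, which transfers to $H(t_u)\dot t_u$ through $\rho$.

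The upper bound, however --- which is the half that actually carries the weight in the proof of Proposition \ref{pro.H3ssi} --- is not proved. What you establish is only $\mathbb E\int_0^sQ_u^{-(d-2)}du=O(s)$, and, as you concede, this $L^1$ estimate does not yield an almost sure bound with a deterministic constant. The repair you sketch (sandwiching $\log Q_s$ between ergodic scalar diffusions ``in the spirit of'' Proposition \ref{PRO.ERGO}) hits an obstruction you do not resolve: the diffusion coefficient $\gamma_s=\sigma\sqrt{1-\dot t_s^{-2}}$ of $\log Q_s$ is an adapted process depending on $\dot t_s$, not a function of $\log Q_s$ (or of time) alone, so the comparison theorem invoked in Lemma \ref{LEM.COMPARPROJ} (Ikeda--Watanabe, which requires both equations to share the same diffusion coefficient $\sigma(s,x)$) does not apply, and pathwise comparison with non-matching diffusion coefficients is false in general. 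Nor is the degeneracy of $\gamma_s$ at $\dot t_s=1$ ``harmless'': since $\alpha(t_s)/\Phi(t_s)\to0$ in the subexponential regime, the region where $Q_s$ is small --- exactly the occupation time one must bound to control $\int_0^sQ_u^{-(d-2)}du$ from above --- is the region $\dot t_s$ close to $1$ where the noise vanishes, so the deterministic drift alone has to be argued to expel the process fast enough, and that argument is missing. The paper's proof is built precisely to avoid this: it works with $v_s=H(t_s)\bigl(\dot t_s+\sqrt{\dot t_s^2-1}\bigr)$, whose martingale part is exactly $\sigma v_s\,dB_s$; this state-linear noise makes the comparison theorem applicable and produces a genuine pathwise sandwich $y_s\le v_s\le x_s$ between two autonomous ergodic diffusions whose invariant measures integrate $x^{2-d}$ (the term $(d-1)\sigma^2H^2v_s/(v_s^2-H^2)$, inherited from the $\sqrt{\dot t^2-1}$ component, governs the boundary behavior), after which the ergodic theorem and $H(t_s)\dot t_s\le v_s\le 2H(t_s)\dot t_s$ give both constants at once. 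As written, your proposal proves only the lower half of the lemma.
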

\noindent
Let us admit Lemma \ref{lem.encadre} for a moment. Recall that if $\alpha$ has subexponential growth, we have 
\[
\lim_{t \to +\infty} \frac{\log \left( \alpha(t)\right)}{\log \left(\int^t \alpha(u) du\right)}=1,
\]
and recall also that from Lemma \ref{lem.control.exp}, we have almost surely when $s$ goes to infinity
\[
\log \left( \int_{.}^{t_s} \a(u) du \right)= \frac{d-1}{2} \s^2 \times s + o(s). 
\]
Thus, almost surely we have
\[
\frac{2 \kappa}{(d-1)\sigma^2} \leq \liminf_{s \to +\infty} \frac{1}{\log(\alpha(t_s))} \, \int_{0}^s \frac{du}{\left[H(t_u)\dot{t}_u\right]^{d-2}} \leq \limsup_{s \to +\infty} \frac{1}{\log(\alpha(t_s))} \, \int_{0}^s \frac{du}{\left[H(t_u)\dot{t}_u\right]^{d-2}} \leq \frac{2 K}{(d-1)\sigma^2},
\]
and we deduce the two following asymptotic bounds (recall that $-H' \geq 0$ since $H$ is nonincreasing) 
\begin{align*}
J_s & \leq \frac{4 K}{(d-1)\sigma^2} \left[  \log(\alpha(t_s)) \times H^{d-1}(t_s) - (d-1) \int_0^s  \log(\alpha(t_v) ) H^{d-2}(t_v) H'(t_v) \dot{t}_v dv\right],\\
\\
J_s & \geq \frac{ \kappa}{(d-1)\sigma^2}   \left[  \log(\alpha(t_s)) \times H^{d-1}(t_s) - (d-1) \int_0^s  \log(\alpha(t_v) ) H(t_v)^{d-2} H'(t_v) \dot{t}_v dv\right].
\end{align*}
A new integration by parts shows that  
\[
\int_0^t H^d(s)ds = H^{d-1}(t) \log(\alpha(t) )  - (d-1) \int_0^t  \log(\alpha(s) ) H^{d-2}(s) H'(s) ds.
\]
In other words, almost surely for $s$ sufficiently large, we have
\[
\frac{ \kappa}{(d-1)\sigma^2} \int_0^{t_s} H^d(u)du  \leq J_s \leq \frac{4 K}{(d-1)\sigma^2} \int_0^{t_s} H^d(u)du,
\]
hence the result.
\end{proof}

\begin{proof}[Proof of Lemma \ref{lem.encadre}]
Let us recall that there exists a Brownian motion $B$ such that 
\[
d \dot{t}_s = -H(t_s) \left( \dot{t}_s^2-1\right) ds  + \frac{d \sigma^2}{2} \dot{t}_s ds +\sigma \sqrt{\dot{t}_s^2-1}d B_s,
\]
and 
\[
d \sqrt{\dot{t}_s^2-1} = -H(t_s) \dot{t}_s \sqrt{ \dot{t}_s^2-1}ds  + \frac{d \sigma^2}{2} \sqrt{\dot{t}_s^2-1}ds +\frac{d-1}{2}\frac{\sigma^2}{\sqrt{\dot{t}_s^2-1}}ds+ \sigma \dot{t}_s d B_s.
\]
Straightforward It\^o calculus shows that the process $v_s:= H(t_s) (\dot{t}_s +  \sqrt{\dot{t}_s^2-1})$ is then solution to the following stochastic differential equation
\begin{equation}
d v_s =  - \frac{v^2_s}{2}  \left( 1-\frac{H'(t_s)}{H^2(t_s)}\right) ds +  \frac{H^2(t_s)}{2}  \left( 1 +\frac{H'(t_s)}{H^2(t_s)}\right)ds + \frac{d \sigma^2}{2}  v_s ds + (d-1)\sigma^2\frac{  H^2(t_s) v_s}{v^2_s-H^2(t_s)}ds  + \sigma v_s d B_s \\
\end{equation}
\noindent
Recall that in the subexponential case, $H(t)$ goes to zero when $t$ goes to infinity and $-H'(t)/H^2(t)$ is bounded by Hypothesis \ref{hypo.2}.
So let $0< \varepsilon<<1$, and $s_0$ large enough (deterministic) so that for all $s\geq s_0$ we have $H(s) \leq \varepsilon$ and $\kappa>0$ such that $\limsup_{s \to +\infty} - \frac{H'(s)}{H^2(s)}  \leq \kappa$. Without loss of generality, we can suppose that $\kappa \geq 1$.
As $t_s >s$ almost surely, for $s >s_0$, we have $H(t_s) \leq \varepsilon$ almost surely, in particular  $\eta:=H(t_{s_0}) \leq \varepsilon$. By the classical comparison results, we get that almost surely, for $s>s_0$ we have
\begin{equation}\label{eqn.encadrement}
y_s \leq v_s \leq x_s,
\end{equation}
where $x_s$ and $y_s$ are the solutions starting from $v_{s_0}> \eta$ of the following stochastic differential equations
\begin{align*}
d x_s & =- \frac{x^2_s}{2}   ds + \frac{\varepsilon^2}{2}  ds + \frac{d \sigma^2}{2}  x_s ds + \frac{(d-1) \sigma^2 \varepsilon^2 }{x_s-\eta}ds  + \sigma x_s d B_s,
\\
d y_s & =  - \kappa y^2_s ds + \frac{d \sigma^2}{2}  y_s ds   + \sigma y_s d B_s.
\end{align*}
Applying It\^o's formula to the logarithm function, we get 
\begin{align*}
(x_s-\eta) & =(x_{s_0}-\eta) \exp \left[    - \frac{1}{2}\int_{s_0}^s (x_u-\eta)   du + \left(  \frac{d \sigma^2}{2}   -\eta \right) (s-s_0) +  \frac{d \sigma^2 \eta + \varepsilon^2 -\eta^2}{2}  \int_{s_0}^s \frac{du}{x_u-\eta}     \right] \\
& \times \exp \left[   \sigma (B_s -B_{s_0}) + \sigma^2  \int_{s_0}^s \frac{(d-1)\varepsilon^2 - x_u^2/2}{(x_u-\eta)^2}du   + \sigma \eta \int_{s_0}^s  \frac{dB_u}{x_u-\eta}\right], 
\end{align*}
from which we deduce that $x_s$ is well defined for $s>s_0$ and satisfies $x_s > \eta$ almost surely. In the same way, we have 
\[
y_s=y_0 \exp \left[    -  \kappa \int_{s_0}^s y_u  du + \frac{d -1}{2} \sigma^2 (s-s_0) + \sigma (B_s -B_{s_0}) \right],
\]
from which we deduce that  $y_s$ is well defined for $s>s_0$ and satisfies $y_s > 0$ almost surely. Moreover, both processes $x_s$ and $y_s$ are ergodic with invariant probability measures $\mu$ and $\nu$ such that  
\[
\mu (dx) =\frac{1}{Z_{\mu}}\frac{(x-\eta)^{2(d-1)\varepsilon^2/\eta^2}}{x^{2(d-1)\varepsilon^2/\eta^2-(d-2)}} \exp \left(  - \frac{1}{\sigma^2} x -\frac{\varepsilon^2}{\sigma^2 x}  \left( 1-\frac{2(d-1)\sigma^2}{\eta}\right)  \right) \mathds{1}_{x>\eta} dx,
\]
and
\[
 \quad \nu(dy) =\frac{1}{Z_{\nu}} y^{d-2} \exp \left(  - \frac{2\kappa }{\sigma^2} y \right) \mathds{1}_{y>0}dy,
\]
where $Z_{\mu}$ and $Z_{\nu}$ are normalizing constants. The function $x \mapsto x^{2-d}$ being integrable against both $\mu$ and $\nu$, applying the ergodic Theorem, we get that almost surely when $s$ goes to infinity:
\[
\frac{1}{s} \int_{s_0}^s \frac{du}{x_u^{d-2}} \to \int \frac{\mu(dx)}{x^{d-2}} \in (0, +\infty), \qquad \frac{1}{s} \int_{s_0}^s \frac{du}{y_u^{d-2}} \to \int \frac{\nu(dy)}{y^{d-2}} \in (0, +\infty).
\]
From (\ref{eqn.encadrement}), we then deduce that almost surely when $s$ goes to infinity:
\[
\int \frac{\mu(dx)}{x^{d-2}} \leq \liminf_{s \to +\infty} \frac{1}{s} \int_{s_0}^s \frac{du}{v_u^{d-2}} \leq  \limsup_{s \to +\infty} \frac{1}{s} \int_{s_0}^s \frac{du}{v_u^{d-2}} \leq \int \frac{\nu(dy)}{y^{d-2}}.
\]
Otherwise, we have naturally $H(t_s) \dot{t}_s \leq v_s \leq 2 H(t_s) \dot{t}_s$, so that
\[
\frac{1}{2s} \int_{s_0}^s \frac{du}{\left[H(t_u) \dot{t}_u\right]^{d-2}} \leq \frac{1}{s} \int_{s_0}^s \frac{du}{v_u^{d-2}} \leq \frac{1}{s} \int_{s_0}^s \frac{du}{\left[H(t_u) \dot{t}_u\right]^{d-2}},
\]
and finally
\[
\int \frac{\mu(dx)}{x^{d-2}} \leq \liminf_{s \to +\infty}  \frac{1}{s} \int_{s_0}^s \frac{du}{\left[H(t_u) \dot{t}_u\right]^{d-2}} \leq  \limsup_{s \to +\infty} \frac{1}{s} \int_{s_0}^s \frac{du}{\left[H(t_u) \dot{t}_u\right]^{d-2}} \leq 2  \int \frac{\nu(dy)}{y^{d-2}}.
\]
\end{proof}

\subsubsection{A convergence criterion}\label{sec.clock}
\smallskip
We conclude this Section by stating a convergence result concerning an additive functional of the temporal process.
Fix $s_0>0$, we are interested in the convergence/divergence of the following integral as $s$ goes to the explosion time $\tau$:
$$C_s:=\sigma^2 \int_{s_0}^s \frac{\a^2(t_u)}{a_u^2} du = \s^2 \int_{s_0}^s \frac{du}{\dot{t}^2_u-1}.$$
As noticed in the beginning of Sect. \ref{sec.proofs}, the spatial process $(x_s, \dot{x}_s)$ can be seen as an inhomogeneous diffusion on $T^1 M$, parametrized by the clock $C_s$. Therefore, the convergence of this time change is of primer importance to understand the asymptotic behavior spatial components of the relativistic diffusion.
\begin{cor} \label{cor.clock}Let $(t_s, \dot{t}_s)$ be the solution of Equation (\ref{eqn.ttpoint}) starting from $(t_0, \dot{t}_0) \in (0,+\infty) \times [1, +\infty)$. Then, when $s$ goes to $\tau$, we have the following asymptotic behaviors:
\begin{enumerate}
\item if $T<+\infty$, the process $C_s$ is almost surely convergent;
\item if $T=+\ii$ and if the growth of $\alpha$ is at most polynomial, then $C_s$ is almost surely convergent;
\item if $T=+\ii$ and if the growth of $\alpha$ is at subexponential with $H^3 \in \mathbb L^1$ in the case $d>3$ and $H^3 \in \mathbb L^{1^{-}}$ in the case $d=3$, then $C_s$ is almost surely convergent;
\item if $T=+\ii$ and $\alpha$ is of exponential growth or is of subexponential growth with $H^3 \notin \mathbb L^1$, $C_s$ goes to infinity with $s$ almost surely. 
\end{enumerate}
\end{cor}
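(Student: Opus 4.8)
The plan is to establish the four cases separately, leaning on the description of the temporal sub-diffusion obtained in Sections \ref{sec.uniexi}--\ref{sec.temp}. Cases (1) and (2) will be quick. If $T<+\ii$, Proposition \ref{PRO.CONVA} gives $\dot{t}_s\to+\ii$ almost surely as $s\to\ta$, so the integrand $1/(\dot{t}_u^2-1)$ is continuous on $[s_0,\ta)$ with limit $0$ at $\ta$, hence bounded; since $\ta<+\ii$ almost surely, $C_\ta<+\ii$. If $T=+\ii$ and $\a$ has at most polynomial growth, the proof of Corollary \ref{cor.trans.poly} yields $\liminf_{s}\log(\dot{t}_s)/\log\big(\int^{t_s}\a(u)\,du\big)>0$ almost surely, while Lemma \ref{lem.control.exp} gives $\log\big(\int^{t_s}\a(u)\,du\big)=\tfrac{d-1}{2}\s^2 s+o(s)$; combining, $\dot{t}_s\ge e^{cs}$ for all large $s$ and some random $c>0$, so $\int_{s_0}^{+\ii}du/(\dot{t}_u^2-1)<+\ii$. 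For the exponential part of case (4), where $H_{\ii}>0$, I would apply Proposition \ref{PRO.ERGO} to $f(x):=1/(x^2-1)$: this function is decreasing on $(1,+\ii)$ and $\n_{H_{\ii},\s}$-integrable (near $x=1$ the density of $\n_{H_{\ii},\s}$ is of order $(x^2-1)^{d/2-1}$, so $f$ against it is of order $(x^2-1)^{d/2-2}$, integrable since $d\ge 3$, and it decays exponentially at infinity), hence $\tfrac1s\int_{s_0}^s du/(\dot{t}_u^2-1)\to\n_{H_{\ii},\s}(f)>0$ almost surely and $C_s\to+\ii$.

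The heart of the proof is the subexponential case. Following the proof of Lemma \ref{lem.encadre}, I would set $v_s:=H(t_s)\big(\dot{t}_s+\sqrt{\dot{t}_s^2-1}\big)$; using $\dot{t}_s-\sqrt{\dot{t}_s^2-1}=H(t_s)/v_s$ one gets $H(t_s)\dot{t}_s\le v_s\le 2H(t_s)\dot{t}_s$, so that on $\{\dot{t}_s\ge\sqrt2\}$ (where $\dot{t}_s^2-1\ge\tfrac12\dot{t}_s^2$) we have $\s^2/(\dot{t}_s^2-1)\le 8\s^2 H^2(t_s)/v_s^2$, while the lower bound $\s^2/(\dot{t}_s^2-1)\ge \s^2 H^2(t_s)/v_s^2$ holds unrestrictedly. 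The idea is to rerun the argument of Lemma \ref{lem.encadre} with the exponent $d-2$ replaced by $2$: comparing $v_s$ with the ergodic processes $x_s,y_s$ of that proof, and noting that $x\mapsto x^{-2}$ is integrable against their invariant measures (which are proportional to $x^{d-2}e^{-cx}$, forcing the restriction $d>3$), one obtains $0<\kappa'<K'$ with $\kappa' s\le\int_{s_0}^s du/v_u^2\le K' s$ for $s$ large. An integration by parts, using that $\int_{s_0}^u dw/v_w^2$ is comparable to $u$, that $u$ is comparable to $\log\a(t_u)$ by Lemma \ref{lem.control.exp} (with $\log\a(t)\sim\log\int^t\a$ in the subexponential case), and the change of variable $t=t_u$, then shows that $\int_{s_0}^s H^2(t_u)/v_u^2\,du$ is comparable, up to additive and multiplicative constants, to $\int_0^{t_s}H^3(u)\,du$; the last step is the identity $\tfrac{d}{dt}\big(H^2(t)\log\a(t)\big)=H^3(t)+2H(t)H'(t)\log\a(t)$ together with $-H'\ge0$. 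Since $t_s\to+\ii$, this integral converges if and only if $H^3\in\mathbb L^1$.

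It remains, when $H^3\in\mathbb L^1$, to control the contribution to $C_s$ of the set $\{\dot{t}_s<\sqrt2\}=\{a_s<\a(t_s)\}$, on which $\s^2/(\dot{t}_s^2-1)=\s^2\a^2(t_s)/a_s^2$. For this I would use that near $0$ the process $a_s$ of \eqref{def.a}, after the deterministic time change $\widehat s:=\s^2\int_0^s\a^2(t_u)\,du$, behaves like a Bessel process of dimension $d\ge 3$, which never reaches $0$ and for which $\int_0^{\widehat S}d\widehat s/\widehat a_{\widehat s}^2<+\ii$ almost surely for every finite $\widehat S$; since $H^3\in\mathbb L^1$ forces $H^d\in\mathbb L^1$, Proposition \ref{pro.H3ssi} gives $\dot{t}_s\to+\ii$ almost surely, so $\{\dot{t}_s<\sqrt2\}$ is almost surely a bounded subset of $[s_0,+\ii)$ and its contribution is almost surely finite. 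Combined with the previous paragraph, this gives the almost sure convergence of $C_s$ in case (3) and, through the unrestricted lower bound, its almost sure divergence in case (4) with subexponential $\a$. The hard part is the borderline dimension $d=3$: there $x\mapsto x^{-2}$ is just barely non-integrable against the invariant measure $\propto x\,e^{-cx}$ (it behaves like $x^{-1}$ near $0$), so $\int_{s_0}^s du/v_u^2$ grows strictly faster than linearly in $s$ and the threshold for the convergence of $C_s$ shifts from $H^3\in\mathbb L^1$ to $H^3\in\mathbb L^{1^-}$; making this logarithmic correction precise, and deducing the exact condition, is the main technical obstacle, carried out in detail in \cite{mathese}.
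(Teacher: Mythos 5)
Your treatment of cases (1), (2) and the exponential part of (4) is correct and follows the paper's own route: Proposition \ref{PRO.CONVA} for $T<+\infty$, the exponentially fast transience of $\dot{t}_s$ in the at most polynomial case (the paper quotes Proposition \ref{PRO.ASYMPC}; your variant via Corollary \ref{cor.trans.poly} and Lemma \ref{lem.control.exp} is fine and even matches the general Definition \ref{defi.2} better), and the ergodic theorem of Proposition \ref{PRO.ERGO} applied to $f(x)=1/(x^2-1)$. Your subexponential analysis for $d>3$ is also essentially the paper's: bound $\sigma^2/(\dot t_s^2-1)$ by $H^2(t_s)/v_s^2$ with $v_s=H(t_s)(\dot t_s+\sqrt{\dot t_s^2-1})$, get linear growth of $\int du/v_u^2$ from the comparison processes of Lemma \ref{lem.encadre}, and transfer to $\int^{t_s}H^3$ by integration by parts (the paper does this on $D_s=\int_0^s du/\dot t_u^2$ via Equation (\ref{eqn.unsurxdeux})). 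Two caveats: your parenthetical describing \emph{both} invariant measures as $\propto x^{d-2}e^{-cx}$ is inaccurate — only $\nu$ (for the lower comparison process $y_s$) has that form, while $\mu$ is supported on $(\eta,+\infty)$; this matters because the divergence half of case (4) for $d=3$ rests precisely on $x^{-2}$ being $\mu$-integrable for every $d\geq 3$, which is what the paper invokes and which your phrasing obscures. Also, the Bessel detour for the set $\{\dot t_s<\sqrt2\}$ is unnecessary: once $\dot t_s\to+\infty$ a.s., continuity and $\dot t_s>1$ on $(0,\tau)$ (Lemma \ref{LEM.UNIEXITEMP}) already make that contribution finite.

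The genuine gap is case (3) for $d=3$, i.e. subexponential growth with $H^3\in\mathbb L^{1^-}$, which you explicitly defer to \cite{mathese} as ``the main technical obstacle''. This case is part of the statement and the paper proves it inside this very proof, so your argument is incomplete as it stands. The paper's closing of the gap goes as follows: if $H\in\mathbb L^{3-\eta}$ with $0<\eta<1$, an integration by parts using $H'\leq0$ yields the deterministic bound $\log(\alpha(t))^{\frac{1}{1-\eta/2}}\leq\Gamma\int_0^t H^{1-\eta}(u)\,du$; on the stochastic side, $y\mapsto y^{-2+\eta}$ \emph{is} integrable against $\nu$ even for $d=3$, and Corollary 14 of \cite{anders} upgrades the ergodic average into the almost sure growth estimate $\int_0^s du/y_u^2=o\bigl(s^{\frac{1}{1-\eta/2}}\bigr)$, hence $\int_0^s du/(H^2(t_u)\dot t_u^2)\leq\log(\alpha(t_s))^{\frac{1}{1-\eta/2}}\leq\Gamma\int_0^{t_s}H^{1-\eta}(u)\,du$ for $s$ large, using $\log\alpha(t_s)=\frac{d-1}{2}\sigma^2 s+o(s)$ from Lemma \ref{lem.control.exp}. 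Feeding this into the integration by parts (\ref{eqn.unsurxdeux}) and integrating by parts once more gives $D_s\leq\Gamma\int_{t_0}^{t_s}H^{3-\eta}(u)\,du<+\infty$, whence the convergence of $C_s$. Without an argument of this kind (or an explicit proof of the ``logarithmic correction'' you allude to), your proposal does not establish point (3) in dimension $3$.
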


\begin{proof}
If $T<+\infty$, Proposition \ref{PRO.CONVA} ensures that the explosion time $\ta$ is finite almost surely and that $\dot{t}_s$ goes to infinity when $s$ goes to $\tau$. We thus have $ \lim_{s \to \ta}\uparrow  C_s < +\infty$ almost surely, hence the first point.
Now, if $T=+\ii$ and if the growth of $\alpha$ is at most polynomial, we know by Proposition \ref{PRO.ASYMPC} that $\dot{t}_s$ goes exponentially fast to infinity with $s$, hence the second point. If the expansion is exponential i.e. if $H_{\infty}>0$, the almost-sure transience of $C_s$ is an immediate consequence of Proposition \ref{PRO.ERGO}, hence the point 4. 
Let us now concentrate on the only remaining case i.e. the subexponential case. An integration by parts gives
\begin{equation}\label{eqn.unsurxdeux}
D_s:=\int_{0}^s \frac{du}{\dot{t}_u^2}  = \int_{0}^s \frac{H^2(t_u)du}{H^2(t_u)\dot{t}_u^2} 
 =  \int_{0}^s \frac{du}{H^2(t_u)\dot{t}_u^2} \times H^2(t_s) - 2 \int_0^s \left(\int_{0}^u \frac{du}{H^2(t_v)\dot{t}_v^2} \right)  H(t_u)H'(t_u) \dot{t}_u du.
\end{equation}
\underline{Suppose first that $d>3$}. With the same notations as in the proof of Lemma \ref{lem.encadre} and following the same reasonning, since the function $x \mapsto 1/x^2$ is integrable against both $\mu$ and $\nu$, we get that there exists two constants $0<\gamma<\Gamma<+\infty$ such that 
\[
\gamma \leq \liminf_{s \to +\infty} \frac{1}{\log(\alpha(t_s))} \, \int_{0}^s \frac{du}{H^2(t_u)\dot{t}_u^2} \leq \limsup_{s \to +\infty} \frac{1}{\log(\alpha(t_s))} \, \int_{0}^s \frac{du}{H^2(t_u)\dot{t}_u^2} \leq \Gamma,
\]
from which we deduce as in the end of the proof of Lemma \ref{lem.encadre}  that 
\begin{equation}\label{eqn.encadretdeux}
\frac{\gamma}{2} \int_{t_0}^{t_s}H^3(u)du \leq D_s \leq \frac{\Gamma}{2} \int_{t_0}^{t_s}H^3(u)du.
\end{equation}
If $H^3 \in \mathbb L^1$, we know from Proposition \ref{pro.H3ssi} that $\dot{t}_s$ is transient almost surely, so that the asymptotic behavior of $C_s$ is similar to the one of $D_s$ and thanks to (\ref{eqn.encadretdeux}) it is almost surely convergent. On the contrary, if $H^3 \notin \mathbb L^1$, since $C_s$ is bounded below by $D_s$, the comparison (\ref{eqn.encadretdeux}) shows that it goes almost surely to infinity with $s$, hence the result if $d>3$. \par
\smallskip
\noindent
\underline{Suppose now that $d=3$}. The above reasonning does not apply because the function $y \mapsto 1/y^2$ is not integrable anymore against $\nu$. Nevertheless, this function is still integrable against $\mu$ and the lower bound in (\ref{eqn.encadretdeux}) still holds true so that we deduce that $C_s$ goes almost surely to infinity with $s$ if $H^3 \notin \mathbb L^1$. 
Now consider a Hubble function $H$ such that $H^3 \in \mathbb L^1{^-}$ i.e. there exists $\eta>0$ such that $H \in \mathbb L^{3-\eta}$. Without loss of generality, we can suppose that $\eta<1$. From the following integration by parts, 
\[
\int_0^t H^{3-\eta}(u)du = \int_0^t H^{1-\eta}(u)du  \times H^2(t) - 2\int_0^t \left( \int_0^s H^{1-\eta}(s)ds \right) H'(s)H(s)ds, 
\]
recalling that $H'\leq 0$ and introducing $\rho:=\int_{\mathbb R^+} H^{3-\eta}(u)du \in (0,+\infty)$,  we deduce that for all $t >0$, we have $\int_0^t H^{1-\eta}(u)du  \times H^2(t) \leq \rho$ and thus
\[
H(t) \leq \frac{\rho^{\eta/2} H^{1-\eta}(t)}{ \left(\int_0^t H^{1-\eta}(u)du \right)^{\eta/2}}, \;\; \hbox{and by integration} \;\; \log(\alpha(t)) \leq \frac{\rho^{\eta/2}}{1-\eta/2} \left(\int_0^t H^{1-\eta}(u)du \right)^{1-\eta/2},
\]
or equivalently,
\begin{equation}\label{eqn.sympa}
 \log(\alpha(t))^{\frac{1}{1-\eta/2}}\leq \Gamma \times \int_0^t H^{1-\eta}(u)du, \;\; \hbox{where}\;\; \Gamma:=\left(\frac{\rho^{\eta/2}}{1-\eta/2} \right)^{\frac{1}{1-\eta/2}}.
\end{equation}
Note that  $y \mapsto y^{-2+\eta}$ is now integrable against $\nu$. Therefore, by Corollary 14 of \cite{anders} and with again the same notations as in the proof of Lemma \ref{lem.encadre}, we get that almost surely, when $s$ goes to infinity 
\[
\int_{0}^s \frac{du}{H^2(t_u)\dot{t}_u^2} \leq 2 \int_{0}^s \frac{du}{v_u^2} \leq 2 \int_{0}^s \frac{du}{y_u^2} = o\left( s^{\frac{1}{1-\eta/2}}\right).
\]
In particular, using the upper bound (\ref{eqn.sympa}), since $\log(\alpha(t_s)) = \frac{d-1}{2} \sigma^2 s +o(s)$ almost surely by Lemma \ref{lem.control.exp}, we get that almost surely for $s$ large enough
\[
\int_{0}^s \frac{du}{H^2(t_u)\dot{t}_u^2} \leq  \log(\alpha(t_s))^{\frac{1}{1-\eta/2}} \leq \Gamma \int_0^{t_s} H^{1-\eta}(u)du.
\]
Using this new upper bound in Equation (\ref{eqn.unsurxdeux}), we deduce that almost surely for $s$ large enough
\[
 D_s \leq  \Gamma \left[ \int_0^{t_s} H^{1-\eta}(u)du \times H^2(t_s) - 2 \int_0^s \left(\int_0^{t_s} H^{1-\eta}(u)du  \right)  H(t_u)H'(t_u) \dot{t}_u du\right]
\]
and a last integration by parts gives
\[
D_s \leq \Gamma \left[   \int_0^s  H^{1-\eta}(t_u)  H^2(t_u)) \dot{t}_u du\right] = \Gamma \left[ \int_{t_0}^{t_s}  H^{3-\eta}(u) du,\right]
\]
hence $D_s$ converges almost surely when $s$ goes to infinity and so does the clock $C_s$. 
\end{proof}

\begin{rem}
In the critical case i.e. if $d=3$ and $H^3 \in \mathbb L^1$ but $H^3 \notin \mathbb L^{1^-}$, the clock $C_s$ goes almost surely to infinity with $s$. Indeed, if $H^3 \in \mathbb L^1$, we know from Proposition 
\ref{pro.H3ssi} that $\dot{t}_s$ is almost surely transient. Then, with the same notations as in the proof of Lemma \ref{lem.encadre}, if $\varepsilon>0$, there exists a random proper time $s_0=s_0(\varepsilon,\omega)$ such that for all $s>s_0$ we have almost surely 
\[
 \frac{H^2(t_s)}{2}  \left( 1 +\frac{H'(t_s)}{H^2(t_s)}\right) \leq \frac{\varepsilon}{2} \quad \hbox{and} \quad  (d-1)\s^2\frac{  H^2(t_s) v_s }{v^2_s-H^2(t_s)} =(d-1)\s^2 \frac{  H(t_s) (\dot{t}_s +  \sqrt{\dot{t}_s^2-1})}{ (\dot{t}_s +  \sqrt{\dot{t}_s^2-1})^2_s-1} \leq  \frac{\varepsilon}{2}. 
\]
For $s>s_0$, we have thus, $v_s \leq v_s^{\varepsilon}$ almost surely, where $v^{\varepsilon}_{s_0}=v_{s_0}$ and $v_s^{\varepsilon}$ is solution of the following stochastic differential equation
\[
d v_s^{\varepsilon} =  - \frac{|v_s^{\varepsilon}|^2}{2}   ds + \frac{3 \sigma^2}{2}   v_s^{\varepsilon} ds  +\varepsilon ds + \sigma  v_s^{\varepsilon} d B_s. 
\]
The process $v_s^{\varepsilon}$ is positive and ergodic, its invariant probability measure is given by 
\[
\mu_{\varepsilon}(dx)= \frac{ \displaystyle{x \exp \left( -\frac{x}{\sigma^2} - \frac{2 \varepsilon}{\sigma^2 x}\right)}}{\displaystyle{ \int_0^{+\infty} x \exp \left(-\frac{x}{\sigma^2} - \frac{2 \varepsilon}{\sigma^2 x}\right) dx}}.
\]
For all $\varepsilon>0$, the function $x\mapsto 1/x^2$ is integrable against $\mu_{\varepsilon}$. Therefore, by the ergodic Theorem,  we have almost surely 
\[
 \lim_{s \to +\infty} \frac{1}{s}  \int_{s_0}^s \frac{du }{|v_u^{\varepsilon}|^2} = \frac{ \displaystyle{\int_{0}^{+\infty} x^{-1} e^{-\frac{x}{\sigma^2} - \frac{2 \varepsilon}{\sigma^2 x}}dx}}{\displaystyle{ \int_0^{+\infty} x e^{-\frac{x}{\sigma^2} - \frac{2 \varepsilon}{\sigma^2 x}} dx}}:=\gamma_{\varepsilon}.
\]
and thus, since $ H(t_s)\dot{t}_s \leq v_s \leq v_s^{\varepsilon}$ for $s$ large enough
\[
\liminf_{s \to +\infty} \frac{1}{s} \int_{s_0}^s \frac{du }{H^2(t_u)\dot{t}_u^2} \geq  \liminf_{s \to +\infty} \frac{1}{s} \int_{s_0}^s \frac{du }{v_u^2} \geq \liminf_{s \to +\infty} \frac{1}{s}  \int_{s_0}^s \frac{du }{|v_u^{\varepsilon}|^2} =\gamma_{\varepsilon}.
\]
Now, by the monotone convergence Theorem, $\gamma_{\varepsilon}$ goes to infinity when $\varepsilon$ goes to zero, so that
\[
  \log(\alpha(t_s)) =O(s) = o\left( \int_{s_0}^s \frac{du }{|v_u^{\varepsilon}|^2}  \right), \quad \hbox{and thus} \quad \log(\alpha(t_s)) = o \left( \int_0^s\frac{du}{H^2(t_u)\dot{t}_u^2}  \right).
\]
Injecting this estimate in Equation (\ref{eqn.unsurxdeux}) and integrating by parts again, we deduce that $D_s$ goes to infinity with $s$ almost surely, hence the result.
\end{rem}


\subsection{Study of the spatial components}\label{sec.spatial}
Having identified the asymptotic behavior of the temporal sub-diffusion $(t_s, \dot{t}_s)$, we can now give the proofs of Theorems \ref{theo.causal}, \ref{theo.causalderive} and \ref{theo.cercasymp} concerning the spatial components $(x_s, \dot{x}_s) \in T M$.

\subsubsection{Convergence to the causal boundary}
\smallskip
We first prove Theorem \ref{theo.causal}, i.e. the convergence of the projection $\xi_s \in \MM$ to the causal boundary $\pa \MM_c^+$.

\begin{proof}[Proof of Theorem \ref{theo.causal}]
Since Robertson-Walker space-times are globally hyperbolic (and a fortiori strongly causal), by definition of the causal boundary, we are left to show that the relativistic diffusion paths are inextendible. From Lemma \ref{LEM.UNIEXITEMP}, this is indeed the case, since the lifetime of the diffusion is precisely $\tau=\inf \{s >0, \; t_s = T\}$.
\end{proof}

We give now a concrete geometric description of this almost-sure convergence, showing in particular that $\xi_s \in \MM$ converges in fact to $\pa \MM_c^+ \backslash \{ i^+\}$ except in the case where $M=\mathbb S^d$ and $I_+(\a)=+\infty$, where $\xi_s \in \MM$ converges to $i^+$. Let us first consider the general case of space-times with finite horizon.

\begin{prop}\label{pro.horizon}
Let $\mathcal M:=(0, T) \times_{\alpha} M$ be a RW space-time satisfying the hypotheses of Sect. \ref{sec.RW} and such that $I_+(\a)<+\infty$. Let $(\xi_s, \dot{\xi}_s)=(t_s, x_s, \dot{t}_s, \dot{x}_s)$ be the relativistic diffusion starting from $(\xi_0, \dot{\xi}_0) \in T^1_+ \MM$. Then almost surely, when $s$ goes to $\ta=\inf \{ s>0, \, t_s= T\}$, we have naturally $t_s \to T$ and $x_s$ converges to a random point $x_{\ii} \in M$.
\end{prop}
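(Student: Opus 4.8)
The plan is to use the hypothesis $I_+(\alpha)<+\infty$ to control the spatial displacement of the diffusion via a deterministic ``clock'' attached to the temporal process. The key observation is the pseudo-norm relation \eqref{eqn.pseudorw}, which gives $h(\dot x_s,\dot x_s)=\alpha^{-2}(t_s)(\dot t_s^2-1)$; hence the Riemannian length of the spatial path satisfies
\[
\int_{s_0}^{\ta} |\dot x_u|\,du=\int_{s_0}^{\ta}\frac{\sqrt{\dot t_u^2-1}}{\alpha(t_u)}\,du.
\]
Since $dt_u=\dot t_u\,du$, the change of variable $t=t_u$ turns this into $\int_{t_{s_0}}^{T}\sqrt{1-\dot t_u^{-2}}\,\frac{dt}{\alpha(t)}\le I_+(\alpha)<+\infty$. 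Thus the spatial path $x_s$ has almost surely finite Riemannian length in $M$. First I would make this computation precise, noting that $t_s\to T$ almost surely is automatic since $t_s\ge t_0+s$ is nondecreasing and bounded by $T$ (when $T<+\infty$ this is immediate; when $T=+\infty$, $I_+(\alpha)<+\infty$ forces $t_s\to+\infty$ as well, because $\dot t_s\ge 1$).

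The second step is to conclude from finite length that $x_s$ converges. Here I would invoke completeness of the Riemannian fiber $(M,h)$: the three model spaces $\mathbb R^d$, $\mathbb H^d$, $\mathbb S^d$ are all complete, hence metrically complete, and a path of finite length in a complete metric space is Cauchy and therefore converges to a limit point $x_\infty\in M$. Concretely, for $s_0\le s\le s'<\ta$ one has $\mathrm{dist}_h(x_s,x_{s'})\le\int_s^{s'}|\dot x_u|\,du\to 0$ as $s,s'\to\ta$, so $(x_s)$ is Cauchy for the Riemannian distance; completeness yields the limit. The randomness of $x_\infty$ is clear since the whole construction is pathwise.

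The main obstacle, and the only genuinely delicate point, is the behavior of the integrand $\sqrt{1-\dot t_u^{-2}}$ near times where $\dot t_u$ is close to $1$: there the factor is small, which only helps the bound, so in fact there is no obstacle to the \emph{finiteness}. The real subtlety is making sure the substitution $t=t_u$ is legitimate up to the explosion time --- one must know that $s\mapsto t_s$ is a genuine (strictly increasing, absolutely continuous) change of variables with image exactly $[t_{s_0},T)$, which follows from $dt_s=\dot t_s\,ds$ with $\dot t_s>1$ for $s>0$ (Lemma \ref{LEM.UNIEXITEMP}) together with the identification of $\ta$ as $\inf\{s>0,\ t_s=T\}$. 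I would therefore organize the proof as: (i) recall $t_s\uparrow T$ and $\dot t_s>1$; (ii) write the spatial length integral and bound it by $I_+(\alpha)$ after the time change; (iii) deduce the Cauchy property and invoke completeness of $M$ to get $x_s\to x_\infty$. This also sets up Propositions \ref{pro.horizon}'s role in Theorems \ref{theo.causalderive}, since the finiteness of the length integral is exactly the statement that the clock $C_s$ is relevant to the tangent-vector asymptotics.
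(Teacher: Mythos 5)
Your argument is correct and is essentially the paper's own proof: the paper likewise uses the pseudo-norm relation to bound $|\dot x_s|$ by $\dot t_s/\alpha(t_s)$, performs the change of variable $dt=\dot t_u\,du$ to get $\mathrm{dist}(x_s,x_0)\le\int_{t_0}^{t_s}\frac{du}{\alpha(u)}\le I_+(\alpha)<+\infty$, and concludes that the finite-length path converges in $M$, with $t_s\to T$ by definition of $\tau$. Your only (harmless) refinements are keeping the exact factor $\sqrt{1-\dot t_u^{-2}}\le 1$ instead of the cruder bound and spelling out the completeness/Cauchy step that the paper leaves implicit.
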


\begin{proof} From Equation (\ref{eqn.pseudorw}), we have  
$|\dot{x}_s|^2=h(\dot{x}_s,\dot{x}_s)=a^2_s/\a^4(t_s)=(\dot{t}^2_s-1)/\a^2(t_s)\leq \dot{t}^2_s /\a^2(t_s)$. One thus deduce that for all $0\leq s < \ta$:
$$\textrm{dist}(x_s,x_0) \leq \int_0^s |\dot{x}_u |du  \leq \int_{t_0}^{t_s} \frac{du}{\a(u)}.$$
When $I_+(\alpha)<+\infty$ and $s$ goes to $\ta$, the last integral is almost surely convergent. The total variation of $x_s$ is thus almost surely convergent and it converges to a random variable $x_{\infty} \in M$. By definition of the explosion time $\tau$, $t_s$ goes to $T\leq +\infty$, therefore the projection $\xi_s=(t_s, x_s)$ converges almost surely to the random point $(T, x_{\infty}) \in \pa \MM_c^+$ according to the description of the causal boundary given in Example \ref{exa.cbound}.
\end{proof}

Let us now concentrate on the spatially flat case with infinite horizon i.e. $M=\mathbb R^d$ and $I_+(\a)=+\infty$.
\begin{prop}\label{pro.eudlicausal}
Let $\mathcal M:=(0, +\infty) \times_{\alpha} \mathbb R^d$ be a RW space-time satisfying the hypotheses of Sect. \ref{sec.RW} such that $I_+(\a)=+\infty$. Let $(\xi_s, \dot{\xi}_s)=(t_s, x_s, \dot{t}_s, \dot{x}_s)$ be the relativistic diffusion starting from $(\xi_0, \dot{\xi}_0) \in T^1_+ \MM$. Then almost surely, when $s$ goes to $\ta=\inf \{ s>0, \, t_s= T\}$, $(t_s, x_s)$ goes to infinity in a random preferred direction along a random hypersurface according to the second point of Example \ref{exa.cbound}.
\end{prop}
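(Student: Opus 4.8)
The plan is to verify, almost surely, the three conditions characterizing convergence to a point of $\pa\MM_c^+\setminus\{i_+\}$ given in the second point of Example~\ref{exa.cbound}: (a)~$t_s\to T=+\ii$; (b)~$x_s/|x_s|$ converges to a random $\Theta_\ii\in\S^{d-1}$ and $|x_s|\to+\ii$; (c)~$\delta_s:=\int_{t_0}^{t_s}\frac{du}{\a(u)}-\langle x_s,\Theta_\ii\rangle$ converges to a finite limit. Condition (a) is immediate since $\dot t_s\ge1$ forces $t_s\ge t_0+s$. A first observation is that, under the standing hypotheses, $I_+(\a)=+\ii$ forces $H_\ii=0$ and an at-most-polynomial growth (an exponential or subexponential $\a$ would make $\int^{\ii}du/\a(u)$ finite; in fact one checks $\limsup_{t}\log\a(t)/\log\int^t\a(u)du\le\tfrac12$). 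Hence Corollary~\ref{cor.trans.poly}, Lemma~\ref{lem.control.exp} and Proposition~\ref{PRO.ASYMPC} apply and give, a.s., $\dot t_s\to+\ii$ at an exponential rate with $\log(\a(t_s)\dot t_s)=\tfrac{d-1}{2}\s^2 s+o(s)$, while by Corollary~\ref{cor.clock}(2) the clock $C_s$ converges a.s.\ to a finite limit $C_\ii$.

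Next I would prove that the normalized spatial velocity $u_s:=\dot x_s/|\dot x_s|\in\S^{d-1}$ converges; it is well defined for $s>0$ since $\dot t_s>1$ a.s.\ (Lemma~\ref{LEM.UNIEXITEMP}) and the pseudo-norm relation~(\ref{eqn.pseudorw}) then forces $\dot x_s\ne0$. In the spatial equations~(\ref{eqn.flj.eucli}) the drift of $\dot x_s$ is radial (proportional to $\dot x_s$), so only the part of $M^{\dot x}$ transverse to $\dot x_s$ moves $u_s$; that transverse part has intensity $\s^2\a^{-2}(t_s)$ per direction, and dividing by $|\dot x_s|^2=\a^{-2}(t_s)(\dot t_s^2-1)$ shows that the angular ``speed squared'' of $u_s$ equals $\s^2/(\dot t_s^2-1)$. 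Thus $u_s$ is, up to an additional bounded drift also driven by $dC_s$, a spherical Brownian motion time-changed by $C_s$, and a standard orthogonality argument (the transverse part of $M^{\dot x}$ is orthogonal to the Brownian motion $B$ driving the temporal diffusion) shows it is independent of the temporal process. Since $C_\ii<+\ii$ a.s., $u_s$ converges a.s.\ to a random $\Theta_\ii\in\S^{d-1}$. Condition (b) then follows: since $|\dot x_s|=\a^{-1}(t_s)\sqrt{\dot t_s^2-1}$, one has $\int_0^s|\dot x_u|\,du=\int_{t_0}^{t_s}\frac{dv}{\a(v)}-r_s$ with $r_s:=\int_0^s\frac{du}{\a(t_u)(\dot t_u+\sqrt{\dot t_u^2-1})}$ nondecreasing and bounded by $\int_0^{\ii}\frac{du}{\a(t_u)\dot t_u}<+\ii$ (exponential growth of $\a(t_s)\dot t_s$); hence $r_s\uparrow r_\ii<+\ii$, and since $I_+(\a)=+\ii$, $\int_0^{\ii}|\dot x_u|\,du=+\ii$. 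Writing $x_s-x_0=\bigl(\int_0^s|\dot x_u|\,du\bigr)\Theta_\ii+\int_0^s|\dot x_u|(u_u-\Theta_\ii)\,du$ and invoking a Ces\`aro-type argument (using $u_u\to\Theta_\ii$ and $\int_0^{\ii}|\dot x_u|\,du=+\ii$), the second term is $o\bigl(\int_0^s|\dot x_u|\,du\bigr)$, so $|x_s|\to+\ii$ and $x_s/|x_s|\to\Theta_\ii$; in particular $\Theta_\ii$ is simultaneously the limiting direction of the position and of the velocity.

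Finally, for (c), the process $\delta_s$ is pathwise nondecreasing since $\tfrac{d}{ds}\delta_s=\tfrac{\dot t_s}{\a(t_s)}-\langle\dot x_s,\Theta_\ii\rangle=\tfrac{\dot t_s-\sqrt{\dot t_s^2-1}}{\a(t_s)}+|\dot x_s|\bigl(1-\langle u_s,\Theta_\ii\rangle\bigr)>0$, so it suffices to bound it from above. The first summand is $\le(\a(t_0)\dot t_s)^{-1}$, hence integrable by the exponential growth of $\dot t_s$; the second equals $\tfrac12|\dot x_s|\,|u_s-\Theta_\ii|^2$, and the technical heart of the argument is the almost-sure finiteness of $\int_0^{\ii}|\dot x_s|\,|u_s-\Theta_\ii|^2\,ds$. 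I would estimate $|u_s-\Theta_\ii|^2\le 2(d-1)(C_\ii-C_s)+2|N_\ii-N_s|^2$, where $N$ is the transverse martingale part of $u_s$ (with $d\langle N\rangle_s\lesssim dC_s$), condition on the temporal process to get $\E[\,|N_\ii-N_s|^2\mid\mathcal F_s\,]\lesssim\E[\,C_\ii-C_s\mid\mathcal F_s\,]$, then exchange the order of integration: $\int_0^{\ii}|\dot x_s|(C_\ii-C_s)\,ds=\s^2\int_0^{\ii}\tfrac{1}{\dot t_v^2-1}\bigl(\int_0^v|\dot x_u|\,du\bigr)\,dv\le\s^2\int_0^{\ii}\tfrac{1}{\dot t_v^2-1}\int_{t_0}^{t_v}\tfrac{dw}{\a(w)}\,dv$, which is finite because, by Lemma~\ref{lem.control.exp} and the at-most-polynomial growth of $\a$, $\dot t_v^2$ grows exponentially whereas $\int_{t_0}^{t_v}dw/\a(w)$ grows at a strictly slower exponential rate. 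I expect this last step --- extracting a clean quantitative comparison between $\int^{t_v}dw/\a(w)$ and $\dot t_v^2$ in the full at-most-polynomial generality, rather than only under the sharp rate $H(t)\,t\to c$ of Proposition~\ref{PRO.ASYMPC} --- to be the main obstacle. Once (a)--(c) are established, the description in the second point of Example~\ref{exa.cbound} shows that $(t_s,x_s)$ converges to the random point $(T,\delta_\ii,\Theta_\ii)\in\pa\MM_c^+$, escaping to infinity in the random direction $\Theta_\ii$ along the random hypersurface $\Sigma(\delta_\ii,\Theta_\ii)$; equivalently, $\xi_s$ is asymptotic to a random lightlike geodesic.
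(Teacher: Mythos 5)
Your architecture is essentially the paper's: you identify $\dot x_s/|\dot x_s|$ as a spherical Brownian motion time-changed by the clock $C_s$ of Sect.~\ref{sec.clock}, use $I_+(\a)=+\ii\Rightarrow$ at most polynomial growth to get $C_\ii<+\ii$ (Corollary \ref{cor.clock}) and hence a limit direction $\Theta_\ii$, and then reduce everything to the convergence of $\delta_s$, whose dangerous part is $\int |\dot x_s|\,(1-\langle u_s,\Theta_\ii\rangle)\,ds=\tfrac12\int|\dot x_s|\,|u_s-\Theta_\ii|^2ds$ --- the same quadratic smallness that the paper extracts. The differences are methodological. First, where the paper controls $\langle \Theta_\ii-u_s,\Theta_\ii\rangle$ pathwise by a two-step law-of-the-iterated-logarithm bootstrap applied to the backward martingale tails, you use conditional second moments given the temporal path plus Tonelli; that is legitimate, but your one-line justification (``orthogonality implies independence'') is too quick as stated: zero bracket of local martingales does not give independence in general. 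Here it can be repaired, because the temporal path is $\s(B)$-measurable (strong solution) and the angular martingale can be represented by a Brownian motion $W$ with $\langle B,W^i\rangle=0$, and orthogonal Brownian motions are independent by L\'evy's characterization; so the conditional martingale property you need does hold, but it deserves this argument, not just orthogonality. Your monotonicity observation $\tfrac{d}{ds}\delta_s>0$ and the treatment of the term $\int ds/\bigl(\a(t_s)(\dot t_s+\sqrt{\dot t_s^2-1})\bigr)$ match the paper's corresponding steps.

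The genuine gap is exactly the one you flag: your final comparison (``$\dot t_v^2$ grows exponentially while $\int^{t_v}dw/\a(w)$ grows strictly slower'') does not close under the bare hypothesis $I_+(\a)=+\ii$. With only $\limsup_t \log\a(t)/\log\int^t\a\le\tfrac12$, the crude bounds give an integrand of size $\exp\bigl(\gamma(2\theta'-1)v+o(v)\bigr)$ with $\theta'>\tfrac12$ allowed, which is inconclusive precisely in the borderline case $\a(t)\asymp t$ (the Milne-type model of Example \ref{exa.model}, for which $I_+(\a)=+\ii$ and the ratio tends to $\tfrac12$). To conclude one needs the sharper joint rates $\tfrac1s\log\dot t_s\to\tfrac{d-1}{2}\tfrac{\s^2}{1+c}$ and $\tfrac1s\log\a(t_s)\to\tfrac{d-1}{2}\tfrac{\s^2 c}{1+c}$ of Proposition \ref{PRO.ASYMPC}, under which your Tonelli integral $\int \dot t_v^{-2}\int^{t_v}dw/\a(w)\,dv$ is indeed dominated by $\int e^{-\gamma v+o(v)}dv<+\ii$ for every $c\in[0,1]$. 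Note that this is the very same point at which the paper's own proof invokes Proposition \ref{PRO.ASYMPC}, whose hypothesis $H(t)\,t\to c$ is an extra regularity assumption not literally implied by $I_+(\a)=+\ii$; so your proposal is not taking a wrong turn, it simply stops one (acknowledged) step short of the quantitative input --- either the rates of Proposition \ref{PRO.ASYMPC} or some substitute valid for all at-most-polynomial $\a$ --- that is needed to finish, and with that input your moment/Tonelli route does yield the conclusion.
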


\begin{proof} First remark that if $I_+(\a)=+\infty$, the expansion is necesseraly at most polynomial (in fact sublinear). In the case $\mathcal M:=(0, T) \times_{\alpha} \mathbb R^d$, the system of stochastic differential equations satisfied by the global relativistic diffusion is the system (\ref{eqn.flj.eucli}). From It\^o's formula, one easily sees that the process $(t_s, \dot{t}_s, \dot{x}_s/|\dot{x}_s|)$ is itself a diffusion process, satisfying 
\begin{equation}\label{eds.eucli.cartesien}
\left \lbrace \begin{array}{l}
\ds{d t_s=\dot{t}_s ds,} \\
\\
\ds{d \dot{t}_s = - H(t_s) \left( \dot{t}_s^2-1\right) ds + \frac{d \s^2}{2} \dot{t}_s ds + d M^{\dot{t}}_s}, \\
\\
\ds{d \,  \frac{\dot{x}_s^i}{|\dot{x}_s|} = - \frac{d-1}{2} \frac{\s^2}{\dot{t}_s^2-1} \times  \frac{\dot{x}_s^i}{|\dot{x}_s|}\, ds + d M^{\dot{x}^i/|\dot{x}|}_s,}\end{array} \right. 
\end{equation}
\[ \hbox{with} \quad 
 \left \lbrace \begin{array}{l}
\ds{d \langle M^{\dot{t}}, \, M^{\dot{t}} \rangle_s = \s^2 \left( \dot{t}_s^2-1 \right) ds,} \\
\\
\ds{d \langle M^{\dot{t}}, \, M^{\dot{x}^i/|\dot{x}|} \rangle_s = 0,} \\
\\
\ds{d \langle M^{\dot{x}^i/|\dot{x}|},  M^{\dot{x}^j/|\dot{x}|} \rangle_s = \frac{\s^2}{\dot{t}_s^2-1}  \left(\delta_{ij} -  \frac{\dot{x}_s^i}{|\dot{x}_s|}\frac{\dot{x}_s^j}{|\dot{x}_s|}\right)ds}.
\end{array}\right. 
\]
Fix $s_0>0$ and consider the process $(\Theta_s)_{s \geq s_0}=(\Theta^1_s, \ldots, \Theta^d_s)_{s \geq s_0}$  defined as:
$$ \Theta^i_{C_s}:= \frac{\dot{x}_s^i}{|\dot{x}_s|}, \quad \textrm{where} \quad C_s:= \s^2 \int_{s_0}^s \frac{du}{\dot{t}_u^2-1} du =\s^2 \int_{s_0}^s \frac{\a^2(t_u)}{a^2_u} du .$$
Then $\Theta_s$ is solution of the stochastic differential equation:
$$ \ds{d \,\Theta^i_s   = -  \frac{d-1}{2} \Theta^i_s \, ds + d M^{\Theta^i}_s,} \quad \textrm{with} \quad \ds{d \langle M^{\Theta^i}, M^{\Theta^j}  \rangle_s=\left(\delta_{ij} - \Theta^i_s \Theta^j_s\right)ds}.$$
In other words, $\Theta_s$ is a standard spherical Brownian motion on $\mathbb S^{ d-1 }$, and $\dot{x}_s/|\dot{x}_s|$ is thus a time-changed spherical Brownian motion
where the clock $C_s$ is precisely the one introduced in Sect. \ref{sec.clock} above.
Since the expansion is at most polynomial, from Corollary \ref{cor.clock}, $C_s$ is almost surely convergent.
Hence, when $s$ goes to $\tau$ the normalized derivative $\dot{x}_s/|\dot{x}_s|$  converges almost surely to a random point $\Theta_{\infty} \in \mathbb S^2$.
Without loss of generality, one can suppose that $x_0=0$. For all $s<\ta$, we have thus
\[ 
\begin{array}{ll} x_s & = \displaystyle{\int_0^s \dot{x}_u du = \int_0^s \frac{\dot{x}_u}{|\dot{x}_u|} \times \frac{a_u}{\a^2(t_u)} du} \\
\\
&  = \displaystyle{\Theta_{\ii} \int_0^s \frac{a_u}{\a^2(t_u)} du + \int_0^s \left( \frac{\dot{x}_u}{|\dot{x}_u|}-\Theta_{\ii} \right) \times \frac{a_u}{\a^2(t_u)} du},
\end{array}
\]
taking the scalar product with $\Theta_{\ii}$, we get
\begin{equation}\label{decompx}\langle x_s, \Theta_{\ii} \rangle = \int_0^s \frac{a_u}{\a^2(t_u)} du +  \int_0^s \left \langle \left( \frac{\dot{x}_u}{|\dot{x}_u|}-\Theta_{\ii} \right), \; \Theta_{\ii} \right \rangle \times \frac{a_u}{\a^2(t_u)} du.\end{equation}
The first term of the right hand side can be written 
\[
\int_0^s \frac{a_u}{\a^2(t_u)} du= \int_{t_0}^{t_s} \frac{du}{\a(u)} - \int_0^s \frac{du}{a_u + \sqrt{a^2_u +\a^2(t_u)}}.
\]
From the study of the temporal sub-diffusion, the integral $\int_0^s du /(a_u + \sqrt{a^2_u +\a^2(t_u)})$ converges almost surely when $s$ goes to $\ta$.
Let us now show that the second term of the right hand side of (\ref{decompx}) converges almost surely when $s$ goes to $\ta$. From the beginning of the proof, we know that the process $\dot{x}_s/|\dot{x}_s|$ is a time-changed spherical Brownian motion. Namely, there exists a standard Brownian motion $W$ of dimension $d$  such that
$$ d  \frac{\dot{x}_s}{|\dot{x}_s|} = -  \frac{d-1}{2}  \s^2  \frac{\a^2(t_s)}{a^2_s} \frac{\dot{x}_s}{|\dot{x}_s|} ds +  d M^{\dot{x}/|\dot{x}|}_s, $$
with
$$ d M^{\dot{x}/|\dot{x}|}_s = \s \times \frac{\a(t_s)}{a_s} \times \left( d W_s -  \frac{\dot{x}_s}{|\dot{x}_s|} \times \left \langle \frac{\dot{x}_s}{|\dot{x}_s|}, d W_s  \right \rangle\right).$$
Integrating the last equation between $s$ and $\ta$, we get
\begin{eqnarray}\label{eqnxmoinstheta}\Theta_{\ii} - \frac{\dot{x}_s}{|\dot{x}_s|} & = & - \frac{d-1}{2}  \s^2  \int_s^{\ta} \frac{\a^2(t_u)}{a^2_u} \frac{\dot{x}_u}{|\dot{x}_u|} du \\ & & - \s \int_s^{\ta} \frac{\a(t_u)}{a_u} \times \left( d W_u -  \frac{\dot{x}_u}{|\dot{x}_u|} \times \left \langle \frac{\dot{x}_u}{|\dot{x}_u|}, d W_u  \right \rangle\right),\nonumber
 \end{eqnarray}
then taking the scalar product with $\Theta_{\ii}$: 
\begin{eqnarray}\label{eqnxmoinsthetascal}\left \langle \Theta_{\ii} - \frac{\dot{x}_s}{|\dot{x}_s|}, \; \Theta_{\ii}  \right\rangle & = & - \frac{d-1}{2}  \s^2  \int_s^{\ta} \frac{\a^2(t_u)}{a^2_u} \left \langle \frac{\dot{x}_u}{|\dot{x}_u|}, \Theta_{\ii} \right \rangle du \\ 
& & -\s \int_s^{\ta} \frac{\a(t_u)}{a_u}\left \langle \Theta_{\ii} - \frac{\dot{x}_s}{|\dot{x}_s|}, \; \Theta_{\ii}  \right\rangle \left \langle \Theta_{\ii}, \; d W_u \right\rangle \nonumber \\
& & - \s \int_s^{\ta} \frac{\a(t_u)}{a_u}\left \langle \Theta_{\ii}, \; \frac{\dot{x}_s}{|\dot{x}_s|} \right\rangle \left \langle \frac{\dot{x}_s}{|\dot{x}_s|} -\Theta_{\ii}, \; d W_u \right\rangle. \nonumber
 \end{eqnarray}

\noindent
From the law of the iterated logarithm, for all $\e>0$, we have almost surely when $s$ goes to $\ta$:
$$\left|\int_s^{\ta} \frac{\a(t_u)}{a_u} \times \left( d W_u -  \frac{\dot{x}_u}{|\dot{x}_u|} \times \left \langle \frac{\dot{x}_u}{|\dot{x}_u|}, d W_u  \right \rangle\right) \right| = o \left( \left[ \int_s^{\ta} \frac{\a^2(t_u)}{a^2_u}  du \right]^{1/2-\e}  \right).$$
From Equation (\ref{eqnxmoinstheta}), one deduce that almost surely when $s$ goes to $\ta$: 
$$\left |\Theta_{\ii} - \frac{\dot{x}_s}{|\dot{x}_s|} \right|  =o \left( \left[ \int_s^{\ta} \frac{\a^2(t_u)}{a^2_u}  du \right]^{1/2-\e}  \right).$$
Injecting this estimate in (\ref{eqnxmoinsthetascal}) and applying the law of the iterated logarithm again, we obtain that for all $\e>0$, when $s$ goes to $\ta$:
$$\left|\left \langle \Theta_{\ii} - \frac{\dot{x}_s}{|\dot{x}_s|}, \; \Theta_{\ii}  \right\rangle \right|  =  o \left( \left[ \int_s^{\ta} \frac{\a^2(t_u)}{a^2_u }du\left( \int_u^{\ta} \frac{\a^2(t_v)}{a^2_v}  dv\right)^{1-2 \e} \right]^{1/2-\e}  \right). $$
From the asymptotic estimates obtained in Proposition \ref{PRO.ASYMPC}, we conclude that when $s$ goes to $\ta$:   
$$\int_0^{s} \left \langle \Theta_{\ii} - \frac{\dot{x}_u}{|\dot{x}_u|}, \; \Theta_{\ii}  \right\rangle \frac{a_u}{\a^2(t_u)} du < +\ii.$$
We have thus shown that, almost surely when $s$ goes to $\tau$, the process 
$ \delta_s:=\int_{t_0}^{t_s} \frac{du}{\a(u)} - \langle x_s, \Theta_{\ii} \rangle$
converges to a limit $\delta_{\infty} \in \mathbb R^+$, in particular since $I_+(\a)=+\infty$, $|x_s|$ goes to infinity with $s$. Therefore, with the same notations as in Example \ref{exa.cbound}, the projection $x_s \in \mathbb R^d$ goes to infinity in the random direction $\Theta_{\infty}$ and $(t_s, x_s)$ goes to infinity in the same direction along the hypersurface $\Sigma(\delta_{\infty}, \Theta_{\infty})$.
\end{proof}

\begin{rem}
Note that in the flat case $\alpha(t) \equiv \alpha$, we recover the long-time asymptotic behavior derived in \cite{ismael}. Moreover, let us emphasize that if the expansion is ``really'' polynomial in the sense that $H(t) \times t $ goes to $c \in (0,1]$ when $t$ goes to infinity or equivalently if 
\[
 \lim_{t \to +\infty} \frac{\log \left( \alpha(t)\right)}{\log \left(\int^t \alpha(u) du\right)}=\frac{c}{1+c} <1,
\]
the path $(t_s, x_s)$ is not only asymptotic to the random hypersurface $\Sigma(\delta_{\infty}, \Theta_{\infty})$, but it is asymptotic to a random curve in the sense that
\[
x_s - \Theta_{\infty} \int_{t_0}^{t_s} \frac{du}{\a(u)}.
\]
converges almost surely. 
Indeed, from the proof above, we have 
\[ 
x_s  = \displaystyle{ \Theta_{\ii} \int_{t_0}^{t_s} \frac{du}{\a(u)}  + \int_0^s \left( \frac{\dot{x}_u}{|\dot{x}_u|}-\Theta_{\ii} \right) \times \frac{a_u}{\a^2(t_u)} du } + \displaystyle{\Theta_{\ii} \int_0^s \frac{du}{a_u + \sqrt{a^2_u +\alpha^2(t_u)}}},
\]
where the last term is almost surely convergent. Otherwise, we have seen that 
\[
\left |\Theta_{\ii} - \frac{\dot{x}_s}{|\dot{x}_s|} \right|  =o \left( \left[ \int_s^{\ta} \frac{\a^2(t_u)}{a^2_u}  du \right]^{1/2-\e}  \right).
\]
If $c \in (0,1)$, combining this estimate with the ones of Proposition \ref{PRO.ASYMPC}, we get that 
\[ \int_0^s \left( \frac{\dot{x}_u}{|\dot{x}_u|}-\Theta_{\ii} \right) \times \frac{a_u}{\a^2(t_u)} du  \]
is almost surely convergent, hence the result.
\end{rem}

We consider now the case of a negatively curved fibre with infinite horizon i.e. $M=\mathbb H^d$ and $I_+(\a)=+\infty$.
\begin{prop}\label{pro.hypercausal}
Let $\mathcal M:=(0, +\infty) \times_{\alpha} \mathbb H^{d}$ be a RW space-time satisfying the hypotheses of Sect. \ref{sec.RW} such that $I_+(\a)=+\infty$. Let $(\xi_s, \dot{\xi}_s)=(t_s, x_s, \dot{t}_s, \dot{x}_s)$ be the relativistic diffusion starting from $(\xi_0, \dot{\xi}_0) \in T^1_+ \MM$. Let us write $x_s$ in polar coordinates, namely $x_s=(\sqrt{1+r_s^2}, r_s \theta_s)$ with $r_s >0$ and $\theta_s \in \mathbb S^{d-1}$. Then almost surely, when $s$ goes to $\ta=\inf \{ s>0, \, t_s= T\}$, we have 
\begin{enumerate}
\item The angle $\theta_s$ converges to a random point $\theta_{\infty} \in \mathbb S^{d-1}$;
\item The projection $x_s$ converges to the random hyperplane 
$$\Pi(\theta_{\infty}):=\{ x \in \mathbb H^{d} \subset \mathbb R^{1,d},\; q(x, (1,\theta_{\infty}))=0\},$$
where $q$ is the usual Minkowskian scalar product;
\item The radial process is transient and there exists a random positive real number $\delta_{\infty}$ such that
$$\delta_s:=\int_{t_0}^{t_s} \frac{du}{\alpha(u)} - \ash(r_s) \to \delta_{\infty}.$$ 
\end{enumerate}
\end{prop}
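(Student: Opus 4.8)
The plan is to run the argument of Proposition \ref{pro.eudlicausal} in hyperbolic geometry, working in the ambient model $\mathbb H^d=\{x\in\mathbb R^{1,d}:q(x,x)=-1,\ x^0\geq 1\}$ ($q$ the Minkowski form) and in geodesic polar coordinates $(\rho,\theta)\in(0,+\infty)\times\mathbb S^{d-1}$, in which $h=d\rho^2+\sinh^2(\rho)\,g_{\mathbb S^{d-1}}$ and $x=(\cosh\rho,\sinh(\rho)\,\theta)$; thus $\rho_s=\ash(r_s)$ is the hyperbolic distance from $x_s$ to the origin $o=(1,0,\dots,0)$. Set $L_s:=\int_{t_0}^{t_s}du/\alpha(u)$. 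Since $I_+(\alpha)=+\infty$ forces the expansion to be at most polynomial, Corollary \ref{cor.clock} gives that the clock $C_s=\sigma^2\int_{s_0}^s du/(\dot t_u^2-1)$ converges almost surely as $s\to\tau=+\infty$, while Lemma \ref{lem.control.exp} (sharpened by Proposition \ref{PRO.ASYMPC} in the genuinely polynomial case) gives $\dot t_s\to+\infty$ with $\log(\alpha(t_s)\dot t_s)=\frac{d-1}{2}\sigma^2 s+o(s)$; in particular $|\dot x_s|=\sqrt{\dot t_s^2-1}/\alpha(t_s)$ diverges exponentially fast in $s$. The hyperbolic arc length is $\int_0^s|\dot x_u|\,du=L_s-\int_0^s du/[(\dot t_u+\sqrt{\dot t_u^2-1})\alpha(t_u)]$, and the subtracted integral converges almost surely because $\alpha(t_u)\dot t_u$ grows exponentially; since $I_+(\alpha)=+\infty$ we get $L_s\to+\infty$, so $x_s$ travels an infinite hyperbolic distance. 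One moreover checks, exactly as $x_s^0=\cosh\rho_s$ is forced to grow exponentially along every geodesic of $\mathbb H^d$, that $\rho_s\to+\infty$, and (once the direction is controlled in the next step) that $\rho_s=L_s+O(1)$, so that $\sinh^2(\rho_s)$ diverges doubly-exponentially fast in $s$.

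The heart of the proof is the convergence of the direction of $x_s$. Since in the \cite{flj} system the position $\xi_s$ is of class $C^1$ in $s$ (only the velocity is a genuine diffusion), $\rho_s$ and $\theta_s$ are $C^1$, and one writes by It\^o's formula the system governing $(\rho_s,\theta_s,\dot x_s)$; in the ambient model the drift of $d\dot x_s$ contains, besides the warped-product terms $(-2H(t_s)\dot t_s+\frac{d\sigma^2}{2})\dot x_s$, the curvature term $|\dot x_s|^2x_s\,ds$ coming from the second fundamental form $\mathrm{II}(v,v)=q(v,v)x$ of the hyperboloid. As along a geodesic — for which $x_s+\dot x_s/|\dot x_s|$ is a fixed future-null ray up to scaling — these ``large'' contributions organise into a purely radial term, leaving the angular velocity $\dot\theta_s$ with no intrinsic drift and with noise entering only through the (conserved-up-to-noise) angular momentum $\sinh^2(\rho_s)\dot\theta_s$, whose martingale part has bracket $\lesssim|\dot x_s|^2\sigma^2\alpha^{-2}(t_s)\,ds$. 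Because $\sinh(\rho_s)$ grows doubly-exponentially in $s$ while everything else grows at most exponentially, $\dot\theta_s=\sinh^{-2}(\rho_s)\times(\text{a process of at most exponential growth})$, so $\int_s^\tau|\dot\theta_u|\,du\to0$ almost surely, and $\theta_s$ converges almost surely to a random $\theta_\infty\in\mathbb S^{d-1}$ — point 1 (equivalently $(x_s^0)^{-1}\dot x_s/|\dot x_s|\to(1,\theta_\infty)$, compare Theorem \ref{theo.cercasymp}). Running the same bound from time $s$ on, the law of the iterated logarithm gives $|\theta_s-\theta_\infty|=e^{-\rho_s}o(1)$ almost surely, whence $q(x_s,(1,\theta_\infty))=-e^{-\rho_s}-\sinh(\rho_s)(1-\langle\theta_s,\theta_\infty\rangle)\to0$: this is point 2, the convergence of $x_s$ to the null hyperplane $\Pi(\theta_\infty)$.

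For point 3 we imitate the projection computation (\ref{decompx})--(\ref{eqnxmoinsthetascal}). In polar coordinates $\dot x_s^0=\sinh(\rho_s)\dot\rho_s$, hence $\dot\rho_s=\dot x_s^0/\sinh(\rho_s)=|\dot x_s|\cos\phi_s$ where $\phi_s$ is the angle between $\dot x_s$ and the radial direction, and therefore $\frac{d}{ds}\delta_s=\frac{\dot t_s}{\alpha(t_s)}-\dot\rho_s=\frac{\dot t_s-\sqrt{\dot t_s^2-1}}{\alpha(t_s)}+|\dot x_s|(1-\cos\phi_s)$. The first term equals $[(\dot t_s+\sqrt{\dot t_s^2-1})\alpha(t_s)]^{-1}$, integrable in $s$ because $\alpha(t_s)\dot t_s$ grows exponentially; the second term is non-negative and comparable to $|\dot x_s|\sin^2\phi_s=\sinh^2(\rho_s)|\dot\theta_s|^2/|\dot x_s|$, which by the previous paragraph is an at-most-exponential quantity divided by $\sinh^2(\rho_s)|\dot x_s|$, hence also integrable. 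So $\delta_s=L_s-\rho_s$ converges almost surely to a random $\delta_\infty\in\mathbb R^+$; since $L_s\to+\infty$ while $\delta_s$ stays bounded, $\rho_s\to+\infty$, i.e. $r_s=\sinh(\rho_s)$ is transient. Together with points 1 and 2 and the description of the causal boundary recalled in Example \ref{exa.cbound}, this identifies the limit point of $(\xi_s)$ with $(T=+\infty,\delta_\infty,\theta_\infty)\in\pa\MM_c^+$.

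The main obstacle is the organisation of the direction SDE in the second step. In flat space $\dot x_s/|\dot x_s|$ was literally a time-changed spherical Brownian motion, so its convergence was immediate from $C_s\to C_\infty<+\infty$; in $\mathbb H^d$ the equation for the direction carries a genuine curvature drift, and one must verify that, after the right normalisation — dividing by $x_s^0$, equivalently tracking the projective null endpoint $[\,x_s+\dot x_s/|\dot x_s|\,]$ or the angular-momentum variable $\sinh^2(\rho_s)\dot\theta_s$ — the $O(|\dot x_s|)$ terms cancel up to a radial term and the residual drift and martingale are controlled by $\int_0^\tau dC_u<+\infty$ and by the doubly-exponentially small factor $\sinh^{-2}(\rho_s)$. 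This control, the preliminary divergence of $\rho_s$, and the convergence of the error integrals in the third step all rest on the exponential rate of divergence of $\dot t_s$ furnished by Lemma \ref{lem.control.exp} and Proposition \ref{PRO.ASYMPC}; the surrounding computations are routine hyperbolic trigonometry, entirely parallel to the flat case of Proposition \ref{pro.eudlicausal} and if anything easier, the extra $\sinh^{-2}(\rho_s)$ damping of the angular noise making the estimates more comfortable.
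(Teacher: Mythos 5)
Your overall scheme (first make the motion asymptotically radial, then read off the convergence of $\theta_s$, of $q(x_s,(1,\theta_\infty))$ and of $\delta_s$ by hyperbolic trigonometry) is the same as the paper's, and your treatment of point 3, once the radiality is granted, matches the paper's decomposition of $\dot r_s/\sqrt{1+r_s^2}$ term by term. But the central step — controlling the direction — is not actually established, and as written it is circular. You justify the convergence of $\theta_s$ by the claim that $\sinh^2(\rho_s)$ grows doubly-exponentially while the angular momentum grows at most exponentially; the doubly-exponential growth comes from $\rho_s=L_s+O(1)$, which you yourself defer to ``once the direction is controlled in the next step'', and your preliminary argument that $\rho_s\to+\infty$ (``exactly as along every geodesic'') is not valid: the diffusion path is not a geodesic, and infinite hyperbolic arc length does not by itself force radial escape — ruling out the path wandering at bounded distance is precisely the point at issue. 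Moreover the quantitative input is asserted rather than proved: the angular momentum $b_s=\alpha^2(t_s)r_s^2|\dot\theta_s|$ is not ``conserved up to noise'' (the vertical Laplacian contributes a drift of order $\frac{d\sigma^2}{2}$ times itself), so ``at most exponential growth'' needs an actual rate; and the ``doubly-exponential'' comparison fails outright in the benchmark case $\alpha(t)=t$ (Example 2.3, case 1), where $L_s=\int_{t_0}^{t_s}du/\alpha(u)=\log(t_s/t_0)$ grows only linearly in $s$, so that $\sinh^2(\rho_s)$ and the competing quantities are all merely exponential and a genuine rate comparison (which you do not carry out, and which would again presuppose $\rho_s\approx L_s$) would be required. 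The same unproved bounds are then reused for the integrability of $|\dot x_s|(1-\cos\phi_s)$ in your point 3.

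The paper closes exactly this gap by a soft argument that needs no growth rates and no a priori information on $\rho_s$: writing $c_s/a_s$ for the cosine of the angle between $\dot x_s$ and the radial direction, It\^o's formula gives $c_s/a_s-c_{s_0}/a_{s_0}=I_s-J_s+M^{c/a}_s$, where $I_s=\int_{s_0}^s\frac{\rho_u^2}{r_u^2}\sqrt{1+r_u^{-2}}\,\frac{a_u}{\alpha^2(t_u)}du$ is nonnegative and nondecreasing, while $J_s$ and $\langle M^{c/a}\rangle_s$ are dominated by the clock $C_s$, convergent by Corollary 4.1 since the expansion is at most polynomial. Because $|c_s/a_s|\le 1$, the increasing part $I_s$ must converge; since $I_+(\alpha)=+\infty$ makes $\int^s a_u/\alpha^2(t_u)\,du$ diverge, convergence of $I_s$ forces $\rho_s/r_s\to0$, i.e. $c_s/a_s\to1$ — the velocity is asymptotically radial — and only then do $\ash(r_s)\simeq\int^s a_u/\alpha^2(t_u)du\to+\infty$, the bound $|\dot\theta_s|=o\bigl(\frac{a_s}{\alpha^2(t_s)r_s}\bigr)$, and the convergence of the error integrals in $\delta_s$ follow. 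If you want to salvage your route, you should replace the growth-rate heuristics by this (or an equivalent) monotonicity argument on a bounded functional of the direction; the subsequent computations in your proposal then go through essentially as in the paper.
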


\begin{proof}
In polar coordinates i.e. if $x=(\sqrt{1+r^2}, r \theta)$ with $r>0$ and $\theta \in \mathbb S^{d-1}$, the normalized spatial derivative $\dot{x}_s/|\dot{x}_s|$ reads 
$$ \ds{\frac{\dot{x}_s}{|\dot{x}_s|}= \left(\frac{c_s}{a_s} \times r_s, \, \frac{c_s}{a_s} \times \sqrt{1+r^2_s} \times \t_s + \frac{\r_s}{r_s} \times \frac{\dot{\t}_s}{|\dot{\t}_s|}\right)},$$
where 
$$\begin{array}{l}\begin{array}{llll}
\ds{a_s= \a(t_s) \sqrt{\dot{t}_s^2-1}}, & \ds{c_s:=\frac{\a^2(t_s) \dot{r}_s}{\sqrt{1+ r^2_s}}} &
  \ds{ b_s:= \a^2(t_s) r^2_s |\dot{\t}_s|}, & \ds{\r_s:=b_s/a_s}.\end{array} \end{array}
$$
From the pseudo-norm relation (\ref{eqn.pseudorw}), we have moreover
\begin{equation} \label{eqn.pseudo2}\ds{\;\; a^2_s=\frac{b^2_s}{r^2_s} + c^2_s}, \quad i.e. \quad \ds{1 =\frac{\r^2_s}{r^2_s}+\frac{c^2_s}{a^2_s}}.\end{equation}
Starting from Equation (\ref{eqn.flj}) in polar coordinates, a straightforward calculation using It\^o's formula shows that the process $c_s/a_s$ satisfies, for $s \geq s_0$ 
$$\frac{c_s}{a_s} - \frac{c_{s_0}}{a_{s_0}} =  I_s - J_s  + M^{c/a}_s, $$
with
$$I_s:=  \int_{s_0}^s \frac{\r^2_u }{r^2_u} \times \sqrt{\frac{1}{r^2_u}+1} \times \frac{a_u}{\a^2(t_u)}du, \qquad J_s:=\frac{d-1}{2} \s^2 \int_{s_0}^s \frac{\a^2(t_u)}{a^2_u} \frac{c_u}{a_u} du,$$
and $M^{c/a}_s$ is a local martingale whose braket is given by 
$$ \langle M^{c/a}\rangle_s:= \s^2 \int_{s_0}^s  \frac{\a^2(t_u)}{a_u^2} \frac{\r_u^2}{r_u^2 }du.$$
As above, since $I_+(\a)=+\infty$ the expansion is necessarily at most polynomial and Corollary \ref{cor.clock} ensures that the clock 
$$C_s=\sigma^2 \int_{s_0}^s \frac{\a^2(t_u)}{a^2_u} du = \sigma^2\int_{s_0}^s \frac{du}{\dot{t}^2_u-1} du $$ 
converges almost surely when $s$ goes to $\tau$. Since $|c_s/a_s|$ and $\rho_s/r_s$ are bounded by one, both processes $J_s$ and $M^{c/a}_s$ also converge almost surely when $s$ goes to $\tau$. Again, $|c_s/a_s|$ being bounded by one, the non-decreasing integral $I_s$ is also convergent and so does the process $c_s/a_s$. We claim that necesseraly $\lim_{s \to \tau}c_s/a_s =1$ almost surely. Indeed, since  $I_+(\a)=+\infty$, from the study of the temporal diffusion $(t_s, \dot{t}_s)$ we know that 
$$ \int_{s_0}^s \frac{a_u}{\a^2(t_u)}du= \int_{s_0}^s \frac{\sqrt{\dot{t}_u^2-1}}{\a(t_u)}du= \int_{\ds{t_{s_0}}}^{t_s} \frac{du}{\a(u)}+o\left(   \int_{\ds{t_{s_0}}}^{t_s} \frac{du}{\a(u)}   \right) \longrightarrow +\infty.$$
Let us define $A:=\{ \omega, \, \lim_{s \to \tau}c_s^2/a_s^2(\omega)<1\}$. From the pseudo-norm relation (\ref{eqn.pseudo2}), on the set $A$, we have $\lim_{s \to \tau}\rho^2_s/r^2_s(\omega)>0$. Therefore, on the set $A$, the non-decreasing integral $I_s$ is not convergent, and from above, we conclude that $A$ is a negligeable set. We have thus shown that $\lim_{s \to \tau}c_s^2/a_s^2 =1$ almost surely. To conclude, we remark that the limit $\lim_{s \to \tau}c_s/a_s$ is necesseraly positive, otherwise $r_s$ would tend to $-\infty$. As $s$ goes to $\tau$, we thus have almost surely
$$  \frac{\dot{r}_s}{\sqrt{1+ r^2_s}}\simeq \frac{a_s}{\a^2(t_s)}, \;\; \hbox{and by integration } \;\; \ash(r_s) \simeq \int_.^s \frac{a_u}{\a^2(t_u)}du .$$
Since $\lim_{s \to \tau}c_s/a_s=1$ almost surely, from the relation (\ref{eqn.pseudo2}), we have $ \lim_{s \to \tau}\rho_s/r_s=0$ almost surely, that is 
$ \lim_{s \to \tau}b_s/ a_s r_s=0$ or equivalently
$$ |\dot{\theta}_s | = o \left( \frac{a_s}{\alpha^2(t_s) r_s} \right) = o \left( \frac{a_s}{\alpha^2(t_s) } \exp \left( -\int_.^s \frac{a_u}{\a^2(t_u)}du\right)\right).$$
Therefore, when $s$ goes to $\tau$, the angle $\theta_s$ converges almost surely to a random point $\theta_{\infty} \in \mathbb S^{d-1}$ and integrating the last estimate, we have
\[
| \theta_{\infty}-\theta_s | = o\left( \exp \left( -\int_.^s \frac{a_u}{\a^2(t_u)}du\right)\right). 
\]
 The Minkowskian scalar produit between $x_s$ and $(1,\theta_{\infty})$ can be written as
\[
q\left(x_s, (1,\theta_{\infty})\right) = \left(  \sqrt{1+r^2_s}-r_s \right)+ r_s \langle  \theta_{\infty}-\theta_s, \theta_s \rangle.
\]
Since $r_s$ goes almost surely to infinity with $s$, the first term of the right hand side vanishes at infinity, and so does the second term from the above estimates on $r_s$ and $| \theta_{\infty}-\theta_s |$.
We are left to show that $\delta_s$ converges almost surely. To see this, we write
\[
 \frac{\dot{r}_s}{\sqrt{1+ r^2_s}}= \frac{a_s}{\a^2(t_s)} - \left( 1- \frac{c_s}{a_s}\right) \frac{a_s}{\a^2(t_s)} =  \frac{\dot{t}_s}{\a(t_s)} - \frac{1}{a_s+\sqrt{a^2_s +\alpha^2(t_s)}}- \left( \frac{1}{1+ \frac{c_s}{a_s}} \frac{\r^2_s }{r^2_s} \frac{a_s}{\a^2(t_s)}\right) .
\]
From the study of the temporal diffusion, we know that almost surely
\[ 
\int_0^{+\infty} \frac{du}{a_u+\sqrt{a^2_u +\alpha^2(t_u)}}<+\infty.
\]
Moreover, since $c_s/a_s\to 1$, we have for $s$ large enough
\[
\int_0^s \frac{1}{1+ \frac{c_u}{a_u}} \frac{\r^2_u }{r^2_u} \frac{a_u}{\a^2(t_u)}du \leq \int_0^s \frac{\r^2_u }{r^2_u} \frac{a_u}{\a^2(t_u)}du,
\]
which is almost surely convergent when $s$ goes to infinity since the integral $I_s$ is. Thus, we can conclude that $\delta_s$ converges almost surely to
\[
\delta_{\infty} =\delta_0 + \int_0^{+\infty} \frac{du}{a_u+\sqrt{a^2_u +\alpha^2(t_u)}}+ \int_0^{\infty} \frac{1}{1+ \frac{c_u}{a_u}} \frac{\r^2_u }{r^2_u} \frac{a_u}{\a^2(t_u)}du.
\]
\end{proof}

\begin{rem}
In Propositions \ref{pro.horizon}, \ref{pro.eudlicausal} and \ref{pro.hypercausal}, we gave the geometric description of the convergence to the causal boundary in the case where $I_+(\a)<+\infty$ and $I_+(\a)=+\infty$ and $M= \mathbb R^d$ and $M=\mathbb H^d$. Thus, the only remaining case is the case $I_+(\a)=+\infty$ and $M=\mathbb S^d$ whose description is implicit at the end of the proof of Theorem \ref{theo.cercasymp} below.
\end{rem}

\subsubsection{Proof of Theorem \ref{theo.causalderive}} 
\label{sec.tangentunitaire}
\smallskip
In this section, we give the proof of Theorem \ref{theo.causalderive} concerning the asymptotic behavior of the normalized spatial derivative $\dot{x}_s/|\dot{x}_s| \in T^1_{x_s} M$ when $I_+(\a)<+\infty$.

\begin{proof}
In the case of interest, namely when $I_+(\a)<+\infty$, the convergence of the spatial projection $x_s$ was already obtained in Proposition \ref{pro.horizon}. 
Let us first prove the point $i)$ and $ii)$ of  Theorem \ref{theo.causalderive}, i.e. the convergence of $\dot{x}_s/|\dot{x}_s|$ if $T<+\infty$ or if $T=+\infty$ and the expansion is at most polynomial or subexponential with $H^3 \in \mathbb L^1$.
For this, starting from Equation (\ref{eqn.flj}), we explicit the stochastic differential equations system satisfied by $(t_s, \dot{t}_s, x_s, \dot{x}_s/|\dot{x}_s|)$ in Cartesian coordinates. 
In the Euclidean case $M=\mathbb R^d$, this system is nothing but the system (\ref{eds.eucli.cartesien}) obtained in the proof of Proposition \ref{pro.eudlicausal}. 
In a synthetic way, in the hyperbolic case $M=\mathbb H^d$ viewed as the half-sphere of the Minkowski space $\mathbb R^{1,d}$ with Cartesian coordinates $(x^0, x^1, \ldots, x^d)$, or in the spherical case $M=\mathbb S^d$ viewed as the sphere of the Euclidean space $\mathbb R^{d+1}$ with Cartesian coordinates $(x^0, x^1, \ldots, x^d)$, the system can be written
\begin{equation}\label{eds.hypersphere.cartesien}
\begin{array}{ll}
& \left \lbrace \begin{array}{l}
\ds{d t_s=\dot{t}_s ds,} \\
\\
\ds{d \dot{t}_s = - H(t_s) \left( \dot{t}_s^2-1\right) ds + \frac{d \s^2}{2} \dot{t}_s ds + d M^{\dot{t}}_s}, \\
\\
\ds{d \,  \frac{\dot{x}_s^{\mu}}{|\dot{x}_s|} = - \kappa \, x^{\mu}_s \times \frac{\sqrt{\dot{t}_s^2-1}}{\a(t_s)} ds - \frac{d-1}{2} \frac{\s^2}{\dot{t}_s^2-1} \times  \frac{\dot{x}_s^{\mu}}{|\dot{x}_s|}\, ds + d M^{\dot{x}^{\mu}/|\dot{x}|}_s,}\end{array} \right. \\
\\
\textrm{with} 
& \left \lbrace \begin{array}{l}
\ds{d \langle M^{\dot{t}}, \, M^{\dot{t}} \rangle_s = \s^2 \left( \dot{t}_s^2-1 \right) ds,} \\
\\
\ds{d \langle M^{\dot{t}}, \, M^{\dot{x}^{\mu}/|\dot{x}|} \rangle_s = 0,} \\
\\
\ds{d \langle M^{\dot{x}^{\mu}/|\dot{x}|},  M^{\dot{x}^{\nu}/|\dot{x}|} \rangle_s = \frac{\s^2}{\dot{t}_s^2-1}  \left(\delta_{\mu \nu} +(\kappa -1)\delta_{\mu 0} \delta_{\nu 0}- \kappa \, x^{\mu}_sx^{\nu}_s-  \frac{\dot{x}_s^{\mu}}{|\dot{x}_s|}\frac{\dot{x}_s^{\nu}}{|\dot{x}_s|}\right)ds},
\end{array}\right. \end{array}\end{equation}
where $\kappa$ denotes the curvature of the space, namely $\kappa=-1$ if $M=\mathbb H^3$ and $\kappa=1$ when $M=\mathbb S^3$.
From both Equations (\ref{eds.eucli.cartesien}) and (\ref{eds.hypersphere.cartesien}), since $x_s$ is convergent and $I_+(\a)<+\infty$, it is clear that $\dot{x}_s/|\dot{x}_s|$ is almost surely convergent if and only if  the inverse of $\dot{t}_s^2-1$ is integrable in the neighborhood of $\tau$, i.e. if and only if the clock 
$C_s$  introduced in Sect. \ref{sec.clock} is almost surely convergent when $s$ goes to $\tau$. From Corollary \ref{cor.clock}, it is the case if $T<+\infty$ or if $T=+\infty$ and the expansion is at most polynomial or subexponential with $H^3 \in \mathbb L^1$ (or $H^3 \in \mathbb L^{1^{-}}$ if $d=3$), hence the result. \par

We now give the proof of point $ii)$ concerning the asymptotic behavior of $\dot{x}_s/|\dot{x}_s|$ when $T=+\infty$ and the expansion is exponential or subexponential with $H^3 \notin \mathbb L^1$. In the Euclidean case $M=\mathbb R^d$, from the proof of Proposition \ref{pro.eudlicausal}, $\dot{x}_s/|\dot{x}_s|$ is a time-changed spherical brownian motion parametrized by the clock $C_s$. From Corollary \ref{cor.clock}, we know that $C_s$ goes to infinity with $s$ almost surely, hence the result. For the two remaining cases 
$M=\mathbb H^d$ or $M=\mathbb S^d$, the proofs are very similar, so we will restrict ourself to the spherical case. Moreover, to simplify the expressions, we will suppose that $d=3$ but the proof applies verbatim for $d \geq 4$.
In the sequel, $\mathbb S^3$ is viewed as the sphere of the Euclidean space $\R^4$, therefore elements of $\mathbb S^3$ or $T_{.} \mathbb S^3$ can be seen as elements of $\mathbb R^4$.
Fix an orthonormal frame $e_0=(e_0^1, e_0^2, e_0^3)$ in the unitary tangent space $T_{x_0}^1 \S^3$, and denote by $e_s=(e_s^1, e_s^2, e_s^3)$ the frame of $T_{x_s}^1 \S^3 \subset \R^4$ obtained by deterministic parallel transport along the great circle joining $x_0$ to $x_s$. When $s$ goes to infinity, $e_s$ converges almost surely to a frame $e_{\infty}$ in $T_{x_{\ii}}^1 \S^3$. Since the path $x_s$ is $C^1$, of finite total variation, $e_s$ is also $C^1$ and if 
$\dot{e}^i_s:= d e_s^i /ds$, we have almost surely 
$$ \int_0^{+\ii} (||\dot{e}_s^1||+  ||\dot{e}_s^2||+ ||\dot{e}_s^3||) ds < +\ii, \; \hbox{where} \; ||\cdot|| \; \hbox{is the Euclidean norm in}\; \mathbb R^4. $$
Let us denote by $u^i_s$ the coordinates of $\dot{x}_s/|\dot{x}_s|$ in the frame $e_s$, that is $u^i_s:=\langle \dot{x}_s/|\dot{x}_s|, \, e_s^i\rangle,$ for $i=1,2,3.$
From Equation (\ref{eds.hypersphere.cartesien}) with $\kappa = 1$, the process $u_s=(u^1_s,u^2_s, u_s^3)$ satisfies 
\begin{equation} d u_s^i =  - \s^2 \times u_s^i \times \frac{\a^2(t_s)}{a^2_s} ds + \left \langle \frac{\dot{x}_s}{|\dot{x}_s|}, \, \dot{e}_s^i \right\rangle ds + d M^{u^i}_s ,\end{equation}
$$\hbox{where} \qquad  d \langle M^{u^i}, M^{u^j}\rangle_s = \s^2
\left( \d_{i j} - u^i_s u^j_s\right)\frac{\a^2(t_s)}{a^2_s} ds.$$
The martingales $M^{u^i}$ can be represented by 3-dimensional standard Brownian motion $W=(W^1,W^2, W^3)$ in the following way:
$$\begin{array}{l}\ds{ d M^{u^1 }_s = \s \times\frac{\a(t_s)}{a_s}  \times \left( u^3_s d W^2_s + u^2_s d W^3_s\right),} \\ \ds{d M^{u^2 }_s = \s \times\frac{\a(t_s)}{a_s}  \times \left( u^3_s d W^1_s - u^1_s d W^3_s\right),} \\
\ds{d M^{u^3 }_s = \s \times\frac{\a(t_s)}{a_s}  \times \left( -u^2_s d W^1_s - u^1_s d W^2_s\right).} \\
\end{array}
$$
Fix $\e>0$  and consider a large enough (random) proper time $s_{\e}$ so that, for all $s \geq s_{\e}$: 
\begin{equation} \int_{s}^{+\ii} (||\dot{e}_u^1||+ ||\dot{e}_u^2||+||\dot{e}_u^3||)du \leq \e^2 /(4 \times 36) \;\; \hbox{and} \;\; \sup_{s \geq s_{\e}} \sum_{i=1}^3||e^i_s-e^i_{\ii} || \leq \e/2.\label{eqn.borneetahyper} \end{equation}
Consider the process $v_s=(v^1_s, v^2_s,v^3_s)$ starting from $u_{s_{\e}}=(u^1_{s_{\e}} u^2_{s_{\e}}, u^3_{s_{\e}})$ and solution of the following equation, for $s \geq s_{\e}$:
$$d v_s^i =  - \s^2 \times v_s^i \times \frac{\a^2(t_s)}{a^2_s} ds + d M^{v^i}_s$$
where
$$\begin{array}{l}\ds{ d M^{v^1 }_s = \s \times\frac{\a(t_s)}{a_s}  \times \left( v^3_s d W^2_s + v^2_s d W^3_s\right),} \\ \ds{d M^{v^2 }_s = \s \times\frac{\a(t_s)}{a_s}  \times \left( v^3_s d W^1_s - v^1_s d W^3_s\right),} \\
\ds{d M^{v^3 }_s = \s \times\frac{\a(t_s)}{a_s}  \times \left( -v^2_s d W^1_s - v^1_s d W^2_s\right).} \\
\end{array}
$$
The processes $v_s^i$, $i=1,2,3$, are the coordinates of time-changed spherical Brownian motion in $\S^2$, the clock being given by  $s':=\s^2 \int^s (\a^2(t_u)/a^2_u) du$ which goes to infinity with $s$ almost surely from Corollary \ref{cor.clock}. A straightforward computation shows that $R^2_s:= |u^1_s - v^1_s|^2 + |u^2_s - v^2_s|^2+ |u^3_s - v^3_s|^2$ satisfies the following equation  for $s \geq s_{\e}$: 
\begin{equation}d R^2_s = 2 \sum_{i=1}^3  (u^i_s - v^i_s) \left \langle \frac{\dot{x}_s}{|\dot{x}_s|}, \, \dot{e}_s^i \right\rangle ds \label{eqn.umoinsvhyper}.\end{equation}
From (\ref{eqn.borneetahyper}), for $s \geq s_{\e}$, we thus have almost surely 
$$ R^2_s \leq  \e^2/36, \;\; \hbox{in particular} \;\; |u_s^i-v_s^i| \leq \e/6 \;\; \hbox{for} \;\; i=1,2,3.$$
Let us introduce the process $\Theta_s^{\e}$ defined for  $s \geq s_{\e}$ by $\Theta_s^{\e}:= \sum_{i=1}^3 v^i_s e^i_{\ii}$. It is a time-changed spherical Brownian motion in the unitary tangent space $T_{x_{\ii}}^1 \S^3 \approx \S^2$, parametrized by the clock
$\s^2 \int_{s_{\e}}^s \frac{\a^2(t_u)}{a^2_u} du,$ 
which goes to infinity with $s$. From the above estimates, we have almost surely for all $s \geq s_{\e}$, 
$$ \begin{array}{ll}\ds{\left|\left|\frac{\dot{x}_{s }}{|\dot{x}_{s}|}- \Theta_s^{\e}\right|\right|} & \ds{= \left|\left| \sum_{i=1}^3 u^i_s e^i_s - \sum_{i=1}^3 v^i_s e^i_{\ii} \right|\right|} \\
& \ds{=\left|\left| \sum_{i=1}^3(u^i_s-v^i_s)  e^i_{\ii} + \sum_{i=1}^3 u^i_s (e^i_s-e^i_{\ii}) \right|\right|}\\
& \ds{\leq \sum_{i=1}^3\left| u^i_s-v^i_s \right|  +  \sum_{i=1}^3\left|\left|e^i_s-e^i_{\ii} \right| \right|}\\
&  \ds{\leq \e/2 +\e/2 \leq \e,}
 \end{array}$$
hence the result.
\end{proof}

\subsubsection{Proof of Theorem \ref{theo.cercasymp}.}\label{sec.cercasymp}
\smallskip
Finally, we give the proof of Theorem \ref{theo.cercasymp} concerning the asymptotic behavior of the normalized spatial derivative $\dot{x}_s/|\dot{x}_s| \in T^1_{x_s} M$ in the case $I_+(\a)=+\infty$.

\begin{proof}
The Euclidean case $M=\mathbb R^d$ is again the easiest case:  we already know that $\dot{x}_s/|\dot{x}_s| $ is a time-changed spherical Brownian motion on $\mathbb S^{d-1}$. Since $I_+(\a)=+\infty$, the expansion is necesseraly at most polynomial and from Corollary \ref{cor.clock}, the clock $C_s$ converges almost surely when $s$ goes to $\tau$, hence the result. 

Let us now treat the hyperbolic case $M=\mathbb H^d \subset \mathbb R^{1,d}$. With the same notations as in the proof of Proposition \ref{pro.hypercausal}, we have 
\[
\ds{\frac{1}{|x^0_s|} \frac{\dot{x}_s}{|\dot{x}_s|}}  =\ds{  \frac{1}{\sqrt{1+r^2_s}}\frac{\dot{x}_s}{|\dot{x}_s|}}
= \ds{\left(\frac{c_s}{a_s} \times \frac{r_s}{\sqrt{1+r^2_s}}, \, \frac{c_s}{a_s}  \times \t_s + \frac{\r_s}{r_s \sqrt{1+r^2_s}} \times \frac{\dot{\t}_s}{|\dot{\t}_s|}\right)} .
\]
By Proposition \ref{pro.hypercausal}, we know that $r_s$ is almost surely transient and $\theta_s$ converges to $\theta_{\infty}$ so that 
\[\ds{\lim_{s \to \tau} \frac{1}{|x^0_s|} \frac{\dot{x}_s}{|\dot{x}_s|}}  =\ds{ \lim_{s \to \tau} \frac{1}{\sqrt{1+r^2_s}}\frac{\dot{x}_s}{|\dot{x}_s|}}
= \ds{\lim_{s \to \tau}\left(\frac{c_s}{a_s} \times \frac{r_s}{\sqrt{1+r^2_s}}, \, \frac{c_s}{a_s}  \times \t_s + \frac{\r_s}{r_s \sqrt{1+r^2_s}} \times \frac{\dot{\t}_s}{|\dot{\t}_s|}\right)} =\ds{(1, \theta_{\infty})}.
\]
Finally, we discuss the last and more surprising case where $M=\mathbb S^d$. We show that both $x_s$ and its normalized derivative $\dot{x}_s/|\dot{x}_s|$ asymptotically describe a random great circle in $\mathbb S^d$. Recall that if $\mathbb S^d$ is endowed with the global Cartesian coordinates of $\mathbb R^{d+1}$, the 
process $(x_s, \dot{x}_s/|\dot{x}_s|)$ satisfies the stochastic differential equations system:
\begin{equation}\label{eds.sphere.cartesien} \begin{array}{l}
\ds{d x_s = \frac{\dot{x}_s}{|\dot{x}_s|} \times \frac{a_s}{\a^2(t_s)} ds, \qquad d \frac{\dot{x}_s}{|\dot{x}_s|} = - x_s \times \frac{a_s}{\a^2(t_s)} ds + d \eta_s,}
\end{array}\end{equation}
with
$$ \ds{d \eta_s:= -\frac{d-1}{2} \s^2 \frac{\a^2(t_s)}{a^2_s} \times   \frac{\dot{x}_s}{|\dot{x}_s|} ds + d M^{\dot{x}/|\dot{x}|}_s,}$$
and where for $\m, \n$ in $\{0, 1, \ldots, d\}$:
$$
\ds{d \langle M^{\dot{x}^{\m}/|\dot{x}|},  M^{\dot{x}^{\n}/|\dot{x}|} \rangle_s =  \s^2 \frac{\a^2(t_s)}{a^2_s}\left(\delta_{\m \n} -  x^{\m}_s x^{\n}_s -  \frac{\dot{x}_s^{\mu}}{|\dot{x}_s|}\frac{\dot{x}_s^{\n}}{|\dot{x}_s|}\right)ds.}$$
Under this form, the system (\ref{eds.sphere.cartesien}) can be seen as an harmonic oscillator, perturbed by $d \eta_s$, and time-changed by the clock $ds' =  a_s ds /a^2(t_s)$. To simplify the expressions, let us introduce the notations $y_s:= \dot{x}_s/|\dot{x}_s|$,  $A_s:= \int_0^s \frac{a_u}{\a^2(t_u)} \, du$. Then, the complex process $z_s:= x_s +i y_s$ is solution to
$$d z_s = - i \, z_s d A_s + i d \eta_s.$$
Thus, for $0 \leq s < \ta$, we have: 
\begin{equation}z_s = z_0 e^{ -i A_s} + i e^{-i A_s} I_s, \quad \hbox{where} \quad I_s:=\int_0^s e^{i A_u} d \eta_u.\label{eqn.zsphere} \end{equation}
The integral $I_s$ decomposes into a sum $I_s=J_s+K_s$ where
$$J_s:= -\frac{d-1}{2} \s^2 \int_0^s e^{i A_u} \frac{\a^2(t_u)}{a^2_u} \times   y_u du, \quad K_s:= \int_0^s e^{i A_u} d M^{\dot{x}/|\dot{x}|}_u.$$
Under our hypotheses, the clock  $C_s= \int_0^s \frac{\a^2(t_u)}{a^2_u} du$ converges almost surely when $s$ goes to $\ta$. Thus, the total variation of $J_s$ and the quadratic variation of $K_s$ converge almost surely when $s$ goes to $\ta$. Consequently, $J_s, \,K_s$ and $I_s$ are almost surely convergent. Let us denote by $I_{\ii}$ the limit of the integral $I_s$. From Equation (\ref{eqn.zsphere}), when $s$ goes to $\ta$, we have:
$$z_s = \left( z_0  + i I_{\ii}\right) e^{-i A_s} - i e^{-i A_s} \left(I_{\ii} - I_s\right) =\left( z_0  + i I_{\ii}\right) e^{-i A_s}+o(1).$$ 
In other words, defining $U_{\ii}:= x_0- \Im(I_{\ii})$ and $V_{\ii}:= y_0+ \Re(I_{\ii})$, i.e.
$$\begin{array}{l}
\ds{U_{\ii} = x_0 - \int_0^{+\ii} \sin \left( \int_0^s \frac{a_u}{\a^2(t_u)} du\right) d \eta_s }, \\
\ds{V_{\ii} = y_0 + \int_0^{+\ii} \cos \left( \int_0^s \frac{a_u}{\a^2(t_u)} du\right) d \eta_s }, 
\end{array}$$
we obtain that almost surely, when $s$ goes to $\ta$: 
$$\begin{array}{l}
\ds{x_s = \cos \left( \int_0^s \frac{a_u}{\a^2(t_u)} du \right) U_{\ii}  + \sin \left( \int_0^s \frac{a_u}{\a^2(t_u)} du \right) V_{\ii} + o(1),} \\
\\
\ds{\frac{\dot{x}_s}{|\dot{x}_s|} = -\sin \left( \int_0^s \frac{a_u}{\a^2(t_u)} du \right) U_{\ii}  + \cos \left( \int_0^s \frac{a_u}{\a^2(t_u)} du \right) V_{\ii} + o(1)}.
\end{array}$$
Necessarily, we have then $|U_{\ii}| = |V_{\ii}|=1$ and $\langle U_{\ii} , V_{\ii} \rangle=0$, hence the result.
\end{proof}

\bibliographystyle{alpha}

\end{document}